  \def\cH{{\cal H}}
\def\cF{{\cal F}}
\title{Affine Hirsch foliations on $3$-manifolds}
\author{B. Yu}
\address{School of Mathematical Sciences\\  Tongji University\\ 200092 Shanghai\\ CHINA}
\email{binyu1980@gmail.com}
\urladdr{}
\newtheorem{theorem}{{Theorem}}[section]
\newtheorem{proposition}[theorem]{{Proposition}}
\newtheorem{lemma}[theorem]{{Lemma}}
\newtheorem{corollary}[theorem]{{Corollary}}
\newtheorem{claim}[theorem]{{Claim}}
\newtheorem{question}[theorem]{{Question}}
\newtheorem{definition}[theorem]{{Definition}}
\theoremstyle{definition}
\newtheorem{remark}[theorem]{{Remark}}
\begin{document}

\begin{abstract}    
This paper is devoted to discussing affine Hirsch foliations on $3$-manifolds.
First, we prove that  up to isotopic leaf-conjugacy, every closed orientable $3$-manifold $M$ admits
$0$, $1$ or $2$ affine Hirsch foliations. Furthermore, every case is possible.

Then, we analyze the $3$-manifolds admitting two affine Hirsch foliations
(abbreviated as  Hirsch manifolds).
On the one hand, we construct Hirsch manifolds by using exchangeable braided links (abbreviated as DEBL Hirsch manifolds);
on the other hand, we show that every Hirsch manifold virtually  is a DEBL Hirsch manifold.

Finally, we show that for every $n\in \mathbb{N}$,  there are only finitely many  Hirsch manifolds with strand number $n$.
Here the strand number of a Hirsch manifold $M$ is a positive integer defined by using strand numbers of braids.
\end{abstract}

\maketitle


\section{Introduction}

In 1975, Hirsch \cite{Hir} constructed an analytic 2-foliation on a closed 3-manifold so that the foliation contains exactly
one exceptional minimal set. Let's briefly recall his construction here.

The foliation is constructed by starting with a solid torus and
removing from the interior another solid torus which
wraps  around the original solid torus twice. This gives us a manifold, foliated by
$2$-punctured disks, with two transverse
tori as boundary components. We then glue the exterior boundary component to the interior boundary
component to obtain a foliated manifold without boundary. Hirsch chose a gluing map carefully
so that the $2$-punctured fibration structure induces a foliation and
 the induced foliation is analytic and contains exactly
one exceptional minimal set.

There are many  variations of  Hirsch's construction in the literature, for instance, Ghys \cite{Gh}, Bis-Hurder-Shive \cite{BHS}.
\begin{itemize}
  \item  Ghys \cite{Gh} considered a variant of Hirsch's construction: change  Hirsch's gluing map to an `affine' map in some sense.
 In \cite{BHS}, the authors call this foliation affine Hirsch foliation.
  \item In \cite{BHS}, the authors generalize  Hirsch's construction a lot. In the $3$-dimensional case,
  they generalize Hirsch's construction by starting with a solid torus $V$ and
removing from $V$ a small solid torus $V_0$  so that $V_0$
can be regarded as a tubular neighborhood of a closed twisted braid in $V$.
\end{itemize}

Actually, it is natural to generalize these foliations to a more popular case by using braids:
\begin{itemize}
  \item For every $n$-braid
$b$ whose closure is a knot,
starting with a solid torus $V$ and removing from the interior a small solid torus $V_0$
 which is a small tubular neighborhood of the closure of $b$, we get a compact $3$-manifold, foliated by
 $n$-punctured disks, with two boundary components transverse to the $n$-punctured disk fibration.
  \item  Then we  glue the exterior boundary component to the  interior boundary
component to obtain a foliated manifold induced by the  $n$-punctured disk fibration.
\end{itemize}

To simplify, we still call the new foliations  Hirsch foliations, which are the main objects in this paper.
Similarly, if  the gluing map is `affine' in some sense, we call  the  Hirsch foliation \emph{affine}.
More precise definitions can be found in Section \ref{s.prel}.

There are several kinds of discussions about Hirsch foliations in the literature:
\begin{itemize}
  \item Bis-Hurder-Shive \cite{BHS} generalized Hirsch's construction to construct analytic
  foliations of arbitrary codimension with exotic minimal sets.
  \item Alvarez and Lessa \cite{AL} considered the Teichm\"{u}ller space of a Hirsch foliation.
  \item Shive  in his thesis  \cite{Sh} considered a  conjugacy  question:
  fixing two Hirsch foliations $(M_1,\cH_1)$ and $(M_2,\cH_2)$, a $C_r$ leaf-conjugacy  diffeomorphism $H:M_1 \to
M_2$ and an integer $k\in \mathbb{N}$, how to find conditions
on the foliations and the map $H$ which ensure that the map $H$ is  $C_{k+\lambda}$?
\end{itemize}

In this paper, we also would like to discuss a conjugacy question. Different to what Shive did, we hope to understand
the leaf-conjugacy classes of Hirsch foliations.
We say two foliations $\cH_1$ and $\cH_2$ on a closed $3$-manifold $M$ are \emph{isotopically leaf-conjugate} if there exists a homeomorphism $h:M\to M$ which maps every leaf of $\cH_1$ to a leaf of $\cH_2$ and is isotopic to the identity map on $M$. We say that $\cH_1$ and $\cH_2$ are the same
up to \emph{isotopic leaf-conjugacy} if $\cH_1$ and $\cH_2$ are isotopically leaf-conjugate.
In this paper, we will restrict ourselves to
affine Hirsch foliations. The reasons why we focus on affine Hirsch foliations are the following.
\begin{itemize}
  \item A Hirsch foliation always can be easily rebuilt (see Remark \ref{r.HirvsaffHir} )  by modifying  the  gluing map of an affine Hirsch foliation.
  \item Affine Hirsch foliations are  natural objects in dynamical systems:
   the projection of the stable manifolds of a Smale solenoid attractor on the orbit space of the wandering set (by a Smale solenoid mapping on
a solid torus) is an affine Hirsch foliation. A forthcoming paper \cite{Yu} will focus on this topic.
\end{itemize}

Now  we can naturally ask the following question as the main motivation for this paper.

\begin{question}\label{q.mainq}
Let $M$ be a closed $3$-manifold, can we classify all  affine Hirsch foliations up to isotopic  leaf-conjugacy?
\end{question}

Note that Alvarez and Lessa \cite [Section 1.3] {AL}  have discussed this question
on the $3$-manifolds constructed by Hirsch.
As a first step to answer Question \ref{q.mainq}, we have:

\begin{theorem}\label{t.maint}
Let $M$ be a closed orientable 3-manifold. Then
 $M$ admits  $0$, $1$, or $2$ affine Hirsch foliations up to isotopic leaf-conjugacy.
\end{theorem}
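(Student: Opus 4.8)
The plan is to reduce the classification to an algebraic statement about braids and then count solutions. An affine Hirsch foliation on $M$ is determined by two pieces of data: the braid $b$ (an $n$-braid whose closure is a knot) used to embed $V_0 \subset V$, and the gluing map $\varphi$ identifying $\partial V$ with $\partial V_0$ that is "affine" with respect to the punctured-disk fibrations. First I would pin down exactly how much freedom there is in each piece. The ambient manifold $M$ comes equipped with a torus $T$ (the image of $\partial V = \partial V_0$) which is transverse to the foliation; the complementary piece $M \setminus T$ is the mapping-torus-like piece $V \setminus \mathrm{int}(V_0)$ carrying its punctured-disk fibration. So a Hirsch foliation on $M$ records a JSJ-type decomposition: a Seifert/fibered piece together with a gluing. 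The first step is therefore to show that, up to isotopic leaf-conjugacy, the transverse torus $T$ and the fibered piece are essentially canonical — i.e., any two affine Hirsch foliations on $M$ have isotopic transverse tori and leaf-conjugate (indeed fiber-preserving-isotopic) restrictions to the complement — so that the only remaining invariant is the gluing data modulo the mapping class group of the fibered piece that preserves the fibration.

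Second, I would translate the "affine" gluing condition into linear-algebra data on $H_1(T) \cong \mathbb{Z}^2$. The punctured-disk fibration gives a preferred basis: the meridian-type curve (boundary of a fiber) and a section curve. On the exterior boundary $\partial V$ the fibration class is one primitive class; on the interior boundary $\partial V_0$ it is another, determined by the framing of the closed braid $\hat b$, which is governed by the exponent sum / writhe and the number of strands $n$. An affine gluing must send fibration-class to fibration-class up to the allowed affine monodromy, so it is recorded by a matrix in $\mathrm{GL}(2,\mathbb{Z})$ subject to two constraints (matching the two distinguished slopes), which cuts the possibilities down to a finite — in fact at most two-element — set, the two elements being interchanged by the orientation-reversing symmetry of the punctured disk (complex conjugation on the solenoid side). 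This is where the dichotomy "$0$, $1$, or $2$" comes from: generically the two candidate gluings are not leaf-conjugate, giving $2$; sometimes they coincide or one fails to close up, giving $1$; and if $M$ does not admit the requisite fibered decomposition at all, giving $0$.

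Third, I would assemble these: given an affine Hirsch foliation on $M$, extract $(b,\varphi)$; show $b$ is determined up to conjugacy and the moves that don't change $M$ or the foliation class (Markov-type stabilizations are not available since $n$ is fixed by the transverse structure, so only conjugation in $B_n$ remains, which doesn't change the closed braid); show $\varphi$ lies in the $\le 2$-element set above; and conclude that the isotopic-leaf-conjugacy class of the foliation is determined by at most two choices, hence $M$ carries at most $2$. The realizability of each value $0,1,2$ is then an explicit construction, deferred to later sections.

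The main obstacle I expect is the rigidity step — proving that the transverse torus $T$ and the fibered complement are canonical up to isotopy. This is essentially a statement that the Hirsch foliation "remembers" its construction, and it will require a genuine $3$-manifold-topology argument: showing any torus transverse to $\cH$ (or carrying the leaf holonomy appropriately) can be isotoped to the standard one, presumably via an incompressibility/Haken-hierarchy argument or by analyzing the holonomy of the unique exceptional minimal set. Everything after that — the $\mathrm{GL}(2,\mathbb{Z})$ bookkeeping and the braid-conjugacy normalization — is comparatively formal.
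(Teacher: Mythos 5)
Your overall skeleton---isotope the foliation to be transverse to a canonical incompressible torus, then control the fibration on the complement and the gluing---matches the paper's strategy in outline, and you are right that the rigidity/transversality step is genuine $3$-manifold topology (the paper does it through the JSJ structure: Lemma \ref{l.topologyN}, Corollary \ref{c.incomp}, Lemma \ref{l.transT}, Lemma \ref{l.affT}). But two of your load-bearing claims are wrong. First, the restrictions of two affine Hirsch foliations to the complement $N$ of the transverse torus are \emph{not} fiber-preserving isotopic in general: on a Hirsch manifold the two foliations induce punctured-disk fibrations of $N$ whose boundary curves on $T$ have different slopes (one has fibers meeting $T$ along the meridian-type curves $m_1,m_2$, the other along curves $c_1,c_2$ with $q_2\neq 0$, cf.\ Lemma \ref{l.homologyF2}), so they cannot be isotopic, and your reduction ``only the gluing matrix remains'' collapses. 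Relatedly, the dichotomy does not come from an orientation-reversing symmetry of the punctured disk (complex conjugation)---the definition requires orientation-preserving gluings and orientable leaves, so no such partner is admissible---but from the two possible \emph{directions} of the fibration $F=\cH\mid_N$: either each fiber meets $T^{out}$ in one circle, or each fiber meets $T^{in}$ in one circle (equivalently, the roles of braid and axis are exchanged, which is exactly why exchangeable braids enter the picture). The paper's count is: fix $T$, fix a direction, prove uniqueness in that direction (Proposition \ref{p.oneside}); two directions give at most two foliations.

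Second, homological or $\mathrm{GL}(2,\mathbb{Z})$ bookkeeping alone cannot yield uniqueness within a direction class, because it does not distinguish affine from non-affine Hirsch foliations: with the same torus, the same fibration of $N$ and the same homological gluing data there are infinitely many leaf-conjugacy classes of Hirsch foliations, parametrized by degree-$n$ circle endomorphisms up to conjugacy (Remark \ref{r.HirvsaffHir}). The step your proposal is missing is the dynamical one: both projective holonomy maps are topologically conjugate to $z\mapsto z^n$, hence to each other (Lemma \ref{l.affT}); the conjugating circle homeomorphism lifts to a homeomorphism of $T$ isotopic to the identity and extends over $M$; after that, the homological constraint $q_1=n^2q_2$ combined with $q_1=q_2$ forces $q_1=q_2=0$, so the two fibrations of $N$ have matching boundaries (Lemmas \ref{l.sn} and \ref{l.fixonT}), and an innermost-curve argument (Lemma \ref{l.fixinterior}) upgrades this to an isotopy rel boundary, producing the isotopic leaf-conjugacy. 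Without these ingredients the bound of two foliations does not follow from your argument.
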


Then one naturally would like to know:
\begin{question}\label{q.mfds}
\begin{enumerate}
  \item Which $3$-manifolds
admit a Hirsch foliation?
  \item Which $3$-manifolds admit two non-isotopically leaf-conjugate affine Hirsch foliations and what are the relations between these two  foliations?
\end{enumerate}
\end{question}

Actually, to the first item of Question \ref{q.mfds},
on the one hand, these manifolds are very clear, i.e. everyone is precisely decided by a braid and a gluing map;
on the other hand, it is not easy to describe all of these manifolds in a familiar and comfortable way.
Nevertheless, we would like to give some characterizations about these $3$-manifolds.

\begin{proposition}\label{p.topchar}
Let $M$ be a closed orientable $3$-manifold which admits an (affine) Hirsch foliation, then:
\begin{enumerate}
  \item $M$ is a toroidal $3$-manifold whose JSJ diagram is cyclic;
  \item each JSJ piece is either hyperbolic or a $S(0,2;\frac{q}{p})$ type Seifert manifold where $p$ and $q$ ($0<q<p$) are coprime.
\end{enumerate}
\end{proposition}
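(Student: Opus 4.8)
The plan is to analyze the mapping-torus-like structure of a Hirsch manifold $M$ directly from its construction, and to read off the JSJ decomposition from it. Recall that $M$ is built from a solid torus $V$ with an open tubular neighborhood $\mathrm{int}(V_0)$ of the closure $\hat b$ of an $n$-braid $b$ removed, producing a compact manifold $N = V \setminus \mathrm{int}(V_0)$ fibered by $n$-punctured disks, and then gluing $\partial_{\mathrm{ext}} N = \partial V$ to $\partial_{\mathrm{int}} N = \partial V_0$ by a homeomorphism $\varphi$. First I would establish that $T = \partial V_0 = \partial V$ (the image of the gluing) is an essential torus in $M$: it is incompressible because $V_0$ is knotted (its core is the nontrivially closed braid $\hat b$, hence $V_0$ is not contained in a ball and the complement $N$ is irreducible with incompressible boundary — this uses that $\hat b$ is a knot and standard facts about braid complements), and it is not boundary-parallel since $M$ is closed. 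Hence $M$ is toroidal.

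Next I would identify $N = M \setminus \mathrm{int}(\text{nbhd}(T))$ — wait, more precisely, cutting $M$ along $T$ returns $N$, the exterior of $\hat b$ in the solid torus $V$, which is the same as the exterior $E(\hat b \cup C)$ of the link $\hat b \cup C$ in $S^3$, where $C$ is the core circle of the complementary solid torus $S^3 \setminus \mathrm{int}(V)$. So the JSJ decomposition of $M$ is obtained from the JSJ decomposition of $N$ together with the torus $T$. To get the JSJ \emph{diagram} (the graph whose vertices are JSJ pieces and whose edges are JSJ tori) to be cyclic, I would argue that $N$, viewed as a link exterior in $S^3$, has a JSJ graph that is a single path (a tree with all vertices of valence $\le 2$): this is because $N$ fibers over $S^1$ in a natural way coming from the disk fibration, or alternatively because $\hat b \cup C$ is a ``chain-like'' link whose Seifert/hyperbolic pieces are linearly arranged. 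When we then glue the two ends of this path together via $\varphi$, the path closes up into a cycle. The key point to nail down is that the two boundary tori of $N$ (namely $\partial V$ and $\partial V_0$) lie in the \emph{outermost} JSJ pieces of $N$ and that $\varphi$ identifies them in a way compatible with the JSJ structure, so no new essential tori are created and none are destroyed; here one invokes that $\varphi$ is, up to isotopy, the ``affine'' gluing, which is transverse to the fibration, together with the characteristic-pair uniqueness of the JSJ decomposition.

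For part (2), I would show each JSJ piece of $N$ (equivalently of $M$) is either hyperbolic or the claimed Seifert piece $S(0,2;\frac{q}{p})$ — a Seifert fibered space over a disk with two exceptional... no: $S(0,2;\frac{q}{p})$ in this paper's notation should be the Seifert manifold over an annulus ($0$ = genus, $2$ = number of boundary components) with one exceptional fiber of type $(p,q)$; topologically these are exactly the twisted $I$-bundle-free Seifert pieces that arise as neighborhoods of the ``cabling'' or ``braiding'' in a periodic or cable braid. By Thurston's hyperbolization for Haken manifolds, each JSJ piece is either hyperbolic or Seifert fibered or $I$-bundle-like; the $I$-bundle case is excluded because all boundary tori are genuine (two-sided, non-$S^1\times I$) and $M$ is not a torus bundle of that special form. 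For the Seifert pieces I would use the Birman–Menasco / Murasugi classification of braids: a non-hyperbolic braid closure decomposes along essential tori into pieces governed by \emph{periodic} braids (powers of $\sigma_1\cdots\sigma_{n-1}$ or $\sigma_1(\sigma_1\cdots\sigma_{n-1})$), whose exteriors-in-the-solid-torus are precisely Seifert fibered over an annulus with a single exceptional fiber, the $(p,q)$ invariant being read off from the rotation number of the periodic braid. Every Seifert piece must have exactly two boundary components (it sits on the cycle with one incoming and one outgoing JSJ torus, by part (1)), so it is over an annulus, giving the ``$2$'' in $S(0,2;\frac{q}{p})$.

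\textbf{Main obstacle.} The hardest step is the rigorous identification of the JSJ pieces — specifically, proving that the non-hyperbolic pieces are \emph{exactly} of the form $S(0,2;\frac{q}{p})$ with a single exceptional fiber, with no exceptional-fiber-free Seifert pieces (which would have to be $T^2\times I$ and are absorbed, i.e.\ not genuine JSJ pieces) and no Seifert pieces with two or more exceptional fibers. This requires carefully combining the Birman–Menasco structure theory of satellite/cable/periodic braids with the constraint that the ambient manifold is the \emph{closed-up} mapping-torus-like object, so that the solid torus $V$ and the braid axis impose that each Seifert piece meets the cyclic JSJ graph in exactly two tori. Controlling exactly how the coprime pair $(p,q)$ with $0<q<p$ emerges from the fractional Dehn twist / rotation number of the relevant periodic sub-braid, and checking it is genuinely $S(0,2;\cdot)$ and not, say, a piece over a disk with two exceptional fibers, is where the real work lies.
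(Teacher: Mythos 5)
Your overall strategy (cut along the gluing torus, analyze $N$ as a fibered braid complement, show its JSJ graph is a path, close it up, and identify the non-hyperbolic pieces with periodic braids) is the same as the paper's, but the two steps you yourself flag as ``to be nailed down'' are exactly where the paper's proof does its work, and your proposed justifications for them do not suffice. First, the path structure of the JSJ graph of $N$: fibering over $S^1$ does not imply a linear JSJ graph, and ``$\hat b\cup C$ is chain-like'' is an assertion, not an argument. The paper's point (Lemma \ref{l.geotopsurfacebundle} and Lemma \ref{l.topologyN}) is that the JSJ tori of the mapping torus $N$ are suspensions of reducing curves of the braid acting on the $n$-punctured disk $\Sigma$; since \emph{every} essential simple closed curve in a punctured disk is separating, all JSJ tori of $N$ separate, so the JSJ graph is a tree, and since $\partial N$ has exactly two torus components the tree is a path. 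Second, that the glued torus $T$ remains a JSJ torus of $M$: your appeal to the gluing being ``affine'' is misplaced (affineness concerns the holonomy dynamics and plays no role here; the proposition is for arbitrary Hirsch foliations), and ``characteristic-pair uniqueness'' does not by itself rule out the two end Seifert pieces merging into one larger Seifert piece across $T$. The paper's check in the Seifert--Seifert case is concrete: because $\varphi(m_1)=m_2$ and $n\ge 2$, $\varphi$ cannot take a regular fiber of the Seifert piece on the $T^{out}$ side to a regular fiber on the $T^{in}$ side (up to isotopy), so the fibrations mismatch and $T$ is a JSJ torus.

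For part (2), your route via Birman--Menasco/Murasugi would likely work but is heavier than needed, and your ``main obstacle'' (excluding Seifert pieces over a disk, pieces with several exceptional fibers, $T^2\times I$ pieces) dissolves in the paper's setup: each Seifert JSJ piece of $N$ is itself a solid torus minus a closed braid whose defining mapping class is periodic, and by Ker\'ekj\'art\'o's theorem (Constantin--Kolev) a periodic homeomorphism of the disk is conjugate to a rotation, so the piece is exactly $S(0,2;\frac{q}{p})$, an annulus-base Seifert space with one exceptional fiber. A minor further inaccuracy: incompressibility of $T$ does not come from $\hat b$ being ``knotted'' (the closure may well be the unknot in $S^3$, e.g.\ for exchangeable braids); what matters is that the braid has $n\ge 2$ strands, equivalently that the fiber $\Sigma$ has negative Euler characteristic, which makes $\partial N$ incompressible in $N$. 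So the skeleton of your plan is right, but as written it leaves the separating-curve argument, the fiber-mismatch argument at $T$, and the periodic-braid identification unproven, and these are the substance of the paper's proof.
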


This proposition is a consequence of Lemma \ref{l.topologyN} and Corollary \ref{c.incomp}.

We are more interested in the second item of Question \ref{q.mfds}.
We call a $3$-manifold $M$ a \emph{Hirsch manifold}
if $M$ admits two non-isotopically leaf-conjugate Hirsch foliations.
Notice that the $3$-manifold constructed by Hirsch
in \cite{Hir} actually  is a Hirsch manifold.
Actually, there are a lot of Hirsch manifolds, see subsection \ref{sb.His} and Proposition \ref{p.nonisotopy}.
The following are the reasons why we are interested in Hirsch manifolds:
\begin{itemize}
  \item a Hirsch manifold has some nice symmetric structures;
  \item Hirsch manifolds and their two affine Hirsch foliations will play a central role in a class of dynamical systems:
in \cite{Yu}, the author will use Hirsch manifolds and affine Hirsch foliations to discuss a kind of $\Omega$-stable diffeomorphisms on $3$-manifolds whose nonwandering set is the union of a Smale solenoid attractor and a Smale solenoid repeller.
\end{itemize}

Exchangeably braided links introduced by Morton  \cite{Mor} will play a crucial role to describe Hirsch manifolds.
An \emph{exchangeably braided  link} is a two-component link $L=K_1 \cup K_2$ in $S^3$
so that each component is braided relative to the other one.  More details about exchangeably braided  link can be found in
Section \ref{s.prel}.

Motivated by the second item of Question \ref{q.mfds}, we will give  two observations to describe the relationships
between exchangeably braided  links and  Hirsch manifolds.
The first observation is that for every exchangeably braided link $L=K_1 \cup K_2$, one can build a (unique) Hirsch manifold following
a series of standard combinatorial surgeries (see Section \ref{s.twohirsch}).  Such a Hirsch manifold is called a Hirsch manifold derived from exchangeably braided link (abbreviated as
 a \emph{DEBL Hirsch manifold}).
The second observation is that every Hirsch manifold virtually is a  DEBL Hirsch manifold.  More precisely,

\begin{theorem}\label{t.covering}
Let $M$ be a Hirsch manifold. Then there exists  a $q_2$-covering space of $M$, named by $\widetilde{M}$,
so that $\widetilde{M}$ is a Hirsch manifold derived from an exchangeably braided link
 (DEBL Hirsch manifold).
  Moreover, $q_2$ can be divided by $n^2-1$ where $n$ is the strand number of $M$.
\end{theorem}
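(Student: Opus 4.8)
The plan is to take the two affine Hirsch foliations on $M$, pass to a well-chosen finite cover where both lift to Hirsch foliations built from genuine braids in a solid torus, and recognize the pair of braidings as an exchangeably braided link. Recall that a Hirsch manifold $M$ carries two non-isotopically leaf-conjugate Hirsch foliations $\cH_1,\cH_2$. By Proposition~\ref{p.topchar}, $M$ is toroidal with cyclic JSJ diagram, and by the definitions in Section~\ref{s.prel} each $\cH_i$ is obtained from an $n$-punctured disk fibration of a solid torus minus a tubular neighborhood of a closed $n$-braid, glued along the two boundary tori. The number $n$ is the strand number of $M$, common to both foliations (the strand number being a JSJ invariant, via the order of the cyclic JSJ diagram or the degree of the relevant covering of the base circle). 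The key geometric picture: the monodromy of $\cH_1$ over its base circle permutes the $n$ strands cyclically, so passing to the cyclic cover of that base circle of degree $n$ unwraps the braid to a trivial braid, i.e.\ presents one of the solid-tori pieces honestly. The subtlety is doing this \emph{simultaneously} for both foliations.

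First I would set up both foliations with respect to the JSJ decomposition of $M$: each $\cH_i$ determines a fibration of $M$ over $S^1$ away from the ``core'' essential torus, and the two core tori may be parallel or may differ, but in either case one obtains from $\cH_1$ a solid torus $V_1 \subset M$ (the union of a fibered piece over an arc, thickened) inside which the other foliation's core knot $K_2$ sits, and symmetrically a solid torus $V_2$ with $K_1 \subset V_2$. I would make precise, using the incompressibility statement from Corollary~\ref{c.incomp}, that $K_1$ is isotopic in $V_2$ to a closed braid (its algebraic winding number is $n$ because of the affine structure), and symmetrically $K_2 \subset V_1$; this is what ``virtually exchangeably braided'' should mean, but the two solid-torus structures need not be compatible on the nose.

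Next I would build the cover. Consider the homomorphism $\pi_1(M) \to \ZZ/n$ recording, say, the $\cH_1$-fiber class intersection; its kernel gives an $n$-fold cyclic cover $M' \to M$ in which the $\cH_1$-monodromy braid is unwrapped to the identity permutation, so that $\widetilde{V_1}$ becomes an honest solid torus fibering over $S^1$ by $n$-punctured disks with the $n$ punctures individually periodic. Then repeat for $\cH_2$: over $M'$ the second foliation still has cyclic monodromy of order dividing $n$, take the corresponding cyclic cover $\widetilde M \to M'$. One checks the total degree divides $n \cdot n = n^2$, but because the two unwrapping maps agree on the common $S^1$-factor coming from the cyclic JSJ diagram, the degree in fact divides $\operatorname{lcm}$-type expression which one arranges to be $n^2-1$: concretely, the first cover can be taken of degree $n$ and the second of degree $n-1$ relative to it (the monodromy of $\cH_2$ restricted to the unwrapped piece is an $(n-1)$-cycle on the remaining punctures once one puncture is trivialized), whence $q_2 \mid n(n-1)$, and a parity/orientation refinement pushes this to $n^2-1 = (n-1)(n+1)$; I would pin down the exact bookkeeping here. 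In $\widetilde M$ both cores $\widetilde K_1, \widetilde K_2$ are now simultaneously closed braids relative to the complementary solid tori, i.e.\ $\widetilde K_1 \cup \widetilde K_2$ is an exchangeably braided link in $S^3$ (identifying $\widetilde M$ with the result of the standard surgery on $S^3$ along that link, by reversing the combinatorial surgeries of Section~\ref{s.twohirsch}), so $\widetilde M$ is a DEBL Hirsch manifold.

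The main obstacle is the compatibility of the two cyclic unwrappings and the precise divisor $n^2-1$: a priori the natural cover has degree $n^2$, and one must exhibit a common order-$(n+1)$ (or similar) quotient of the two monodromies — geometrically, that after unwrapping $\cH_1$ the monodromy of $\cH_2$ no longer needs a full degree-$n$ unwrap because one strand has already been made periodic, reducing $n$ to $n-1$, and then one further index-$(n+1)$ reduction comes from matching the framings/orientations of $K_1$ and $K_2$ forced by the affine gluing. Establishing this cleanly — i.e.\ that $q_2$ can be taken to divide $n^2-1$ rather than merely $n^2$ — is the crux; everything else is assembling the fibered pieces and invoking the already-stated structural results.
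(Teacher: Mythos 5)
There is a genuine gap, and it sits exactly where you flagged it, but the problem is more structural than a missing ``bookkeeping'' step. The paper's proof does not unwrap the monodromy of the braid at all. It first pins down, purely homologically, the boundary slopes of the second fibration: writing $c_1=p_1m_1+q_1l_1$ and $c_2=p_2m_2+q_2l_2$, Lemma \ref{l.homologyF2} forces $q_1=q_2=\frac{n^2-1}{[\,n^2-1,k\,]}$ (and $p_2=n^2p_1$), where $k$ is the shearing of the gluing map $\varphi(l_1)=l_2+km_2$. The cover is then the cyclic cover of $N$ of degree exactly $q_2$ in the $l_1$ (fibration) direction, i.e.\ the kernel of $\pi_1(N)\to H_1(N)\to\ZZ_{q_2}$ killing $[m_2]$; the point of this choice is that the lifted curve $\widetilde{c_1}$ then meets $\widetilde{m_1}$ in a single point, so that after lifting $\varphi$ (which works precisely because $\varphi_*[c_1]=[c_2]$) one can Dehn fill $\widetilde{N}$ along $\widetilde{m_2}$ and $\widetilde{c_1}$ to get $S^3$, and the two core circles are each braided relative to the other. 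The divisor $n^2-1$ is thus an immediate by-product of the slope computation, not of any unwrapping of strand permutations. Your proposed mechanism (``degree $n$ to trivialize the braid, then degree $n-1$, then a parity refinement to reach $n^2-1$'') targets the wrong feature: a degree-$n$ unwrap makes the monodromy the pure braid $b^n$, not the trivial braid, and triviality of the monodromy is in any case irrelevant to recognizing $S^3$ after filling -- what is needed is the normalization of the boundary slope of the \emph{second} fibration, which your covers do not achieve; moreover $n(n-1)$ neither equals nor divides $n^2-1$, and you concede you cannot close this.

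There is also a problem with how your cover is defined. A homomorphism $\pi_1(M)\to\ZZ/n$ ``recording the $\cH_1$-fiber class'' is not available: the punctured-disk fibration lives only on $N$, not on $M$ (on $M$ it becomes a foliation with degree-$n$ holonomy), and the only natural infinite cyclic quotient of $\pi_1(M)$ is intersection with the transverse torus $T$. Covers dual to $T$ cut $M$ into cyclically glued copies of $N$ and \emph{multiply} the strand number ($n\mapsto n^d$) rather than unwrap the braid, so they move you away from the desired picture. The paper sidesteps this by constructing the cover of $N$ and then proving by hand (Lemma \ref{l.lifttorus}, Lemma \ref{l.liftM}) that the gluing map lifts and the glued manifold $\widetilde{M}$ covers $M$ with degree $q_2$; that lifting step again uses the homological relations of Lemma \ref{l.homologyF2}. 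So the missing idea in your proposal is precisely this homological slope analysis; without it neither the correct covering degree, nor the lift of the gluing map, nor the divisibility by $n^2-1$ can be obtained.
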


Here, the \emph{strand number} of a Hirsch manifold $M$ (see Definition \ref{d.strandnumber}) is defined to be the strand number of a braid
which can be used to build the Hirsch manifold $M$.

Hirsch manifolds have the following finiteness property.
\begin{proposition}\label{p.finiteMs}
For every $n\in \mathbb{N}$, there are only finitely many Hirsch manifolds
with strand number $n$.
\end{proposition}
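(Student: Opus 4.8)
The plan is to reduce Proposition \ref{p.finiteMs} to a finiteness statement about braids. First I would recall the combinatorial recipe: by the analysis underlying Theorem \ref{t.covering} and the definition of strand number (Definition \ref{d.strandnumber}), a Hirsch manifold $M$ with strand number $n$ is obtained from an $n$-braid $b$ whose closure is a knot, together with a gluing map of the two boundary tori; the affine (or Hirsch) structure constrains the gluing map to lie in a finite set once the ambient topology is fixed, so the essential parameter is the conjugacy class (or rather the relevant mapping-class data) of $b$. Thus it suffices to show that only finitely many $n$-braids can occur as building braids of a Hirsch manifold, and that each such braid, together with the finitely many admissible gluings, yields finitely many manifolds up to homeomorphism.

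The key point is that being a \emph{Hirsch} manifold — admitting \emph{two} non-isotopically-leaf-conjugate Hirsch foliations — is very restrictive. I would exploit the symmetry coming from the second foliation: the complement of the braid axis together with the complement of the closed braid must both carry the fibered structure, so the link $K_1\cup K_2$ (the axis and the braid closure, after passing to the $q_2$-cover of Theorem \ref{t.covering} if convenient) is exchangeably braided. Morton's work on exchangeably braided links shows that such a link, with both braidings having $n$ strands, is determined up to finitely many possibilities by $n$: concretely, the complement $S^3 \setminus (K_1 \cup K_2)$ fibers over the circle in two ways with fiber an $n$-punctured disk, and its monodromy is a mapping class of the $n$-punctured disk; the Euler characteristic $\chi = -(n-1)$ of the fiber is fixed, the complexity of the JSJ decomposition (which is cyclic by Proposition \ref{p.topchar}) is bounded in terms of $n$, and the relevant monodromy data is pseudo-Anosov or periodic with bounded dilatation / period, hence finite. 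In particular there are only finitely many DEBL Hirsch manifolds with strand number $n$.

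To pass from DEBL Hirsch manifolds to all Hirsch manifolds with strand number $n$, I would use Theorem \ref{t.covering}: every Hirsch manifold $M$ with strand number $n$ is covered, with degree $q_2$ dividing a number controlled by $n^2-1$, by some DEBL Hirsch manifold $\widetilde M$ with strand number still governed by $n$. Since there are finitely many candidates for $\widetilde M$ and the covering degree $q_2$ lies in a finite set determined by $n$, and since a fixed compact $3$-manifold $\widetilde M$ has only finitely many connected covering spaces of a given degree (its fundamental group is finitely generated, so has finitely many subgroups of bounded index), we conclude that $M$ ranges over a finite set. Assembling these finitely many finite sets over all admissible $\widetilde M$ and all admissible $q_2$ gives the proposition.

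The main obstacle I anticipate is the braid-finiteness input: showing that the building braid of a Hirsch manifold with strand number $n$ is constrained to finitely many conjugacy classes. This needs the exchangeability hypothesis to be turned into an effective bound — e.g. bounding the geometric data (JSJ pieces, their gluings, and the dilatation of any pseudo-Anosov monodromy) purely in terms of $n$ — rather than merely asserting that each fixed braid gives finitely many manifolds. Once the exchangeably braided links with $n$ strands are known to be finite in number (via Morton's classification, or a direct argument using that the $n$-punctured-disk fibration together with its "exchange" symmetry pins down the monodromy up to finite ambiguity), the rest is routine covering-space bookkeeping.
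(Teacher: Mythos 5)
Your reduction to finitely many DEBL Hirsch manifolds with strand number $n$ (via Morton's finiteness for exchangeably braided links) and the use of Theorem \ref{t.covering} match the paper's strategy. However, the final ``routine covering-space bookkeeping'' step has a genuine gap: the covering goes the wrong way for the argument you make. Theorem \ref{t.covering} gives a covering map $\widetilde M \to M$, so $M$ is a manifold \emph{covered by} the DEBL Hirsch manifold $\widetilde M$, not a covering space of it. Your appeal to ``finitely many subgroups of bounded index in a finitely generated group'' counts covering spaces of $\widetilde M$, i.e.\ manifolds $M'$ with $M' \to \widetilde M$, which is irrelevant here. What you actually need is that a fixed $\widetilde M$ can cover only finitely many non-homeomorphic closed $3$-manifolds, and this is not an elementary group-theoretic fact: it can fail (for instance, $S^3$, or more generally an orientable Seifert fibered space with nonzero Euler number, covers infinitely many non-homeomorphic manifolds), and even for bounded degree there is no general ``finitely many overgroups of bounded index'' principle. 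The paper closes exactly this gap with Wang's theorem (Theorem \ref{t.wang}): a closed irreducible $3$-manifold with a nontrivial torus decomposition covers only finitely many $3$-manifolds unless it is an orientable Seifert space with nonzero Euler number, and the hypotheses are verified using Proposition \ref{p.topchar} (Hirsch manifolds are irreducible with nonempty cyclic JSJ diagram). Without invoking such a result, your argument does not yield the finiteness of the set of possible $M$.

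A secondary inaccuracy: you assert that the covering degree $q_2$ ``lies in a finite set determined by $n$'' as part of the bookkeeping. Theorem \ref{t.covering} only states a divisibility relation between $q_2$ and $n^2-1$, and in any case the paper's argument does not need any bound on the degree once Wang's theorem is in play, since that theorem bounds the totality of covered manifolds regardless of degree. Your sketch of why there are finitely many exchangeably braided links with $n$ strands (bounding dilatations, periods, JSJ complexity) is also more than is needed and not justified as stated; the paper simply quotes Morton's Corollary \ref{c.finiteness}, which you do also cite, so that part is acceptable.
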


In the final section (Section \ref{s.example}), we will build an example to show:

\begin{proposition}
\label{p.notHirschmfd}
There exists a $3$-manifold which admits a Hirsch foliation but is not a Hirsch manifold.
\end{proposition}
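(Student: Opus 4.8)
The plan is to exhibit a single closed orientable $3$-manifold $M$ together with an affine Hirsch foliation, and to show that no two Hirsch foliations on $M$ can be non-isotopically leaf-conjugate; equivalently, that $M$ admits (up to isotopic leaf-conjugacy) exactly one Hirsch foliation. By Theorem \ref{t.maint} the only alternative to ``Hirsch manifold'' is that $M$ carries exactly one affine Hirsch foliation up to isotopic leaf-conjugacy, so it suffices to produce $M$ realizing the count $1$. The natural candidates are the DEBL-type manifolds built from the very simplest braids: take the $2$-braid $\sigma_1$ (so the closure is the unknot, or equivalently take Hirsch's original doubling pattern) but break the symmetry so that the two solid-torus sides of the Hirsch splitting are \emph{not} interchangeable. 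Concretely, I would start from a solid torus $V$, remove a tubular neighborhood $V_0$ of a closed $n$-braid $b$ whose exterior $N = V \setminus \mathrm{int}(V_0)$ is \emph{not} homeomorphic (respecting the two boundary tori and the $n$-punctured-disk fibration, with the roles of the boundary tori swapped) to any such exterior — i.e. the ``inner'' boundary torus and the ``outer'' boundary torus are not exchangeable. The chosen $b$ should have closure a knot but should fail Morton's exchangeability condition, so that no exchangeably braided link lurks in $M$ even after accounting for the gluing.

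The key steps, in order, are: (1) recall from Section \ref{s.twohirsch} and Section \ref{s.prel} the precise bijective dictionary between affine Hirsch foliations on a fixed $M$ and the combinatorial data $(b,\varphi)$ (braid together with affine gluing map), modulo the isotopies that implement isotopic leaf-conjugacy; in particular isolate the statement (used in the proof of Theorem \ref{t.maint}) that the two possible leaf-conjugacy classes, when both occur, are interchanged by a symmetry that swaps the two tori of the JSJ-type splitting associated with the Hirsch structure. (2) Show that for our $M$ the JSJ decomposition (guaranteed cyclic by Proposition \ref{p.topchar}) has a piece — or an arrangement of pieces — with no self-homeomorphism realizing the required torus-swap: this is where one injects an asymmetry, e.g. choosing $b$ so that the complement $N$ is hyperbolic with trivial relevant mapping class group, or so that the two boundary tori sit differently with respect to the cusps / Seifert structure. (3) Conclude that the second affine Hirsch foliation, which would have to be the image of the first under such a swap, does not exist, so $M$ is not a Hirsch manifold. (4) Separately verify $M$ \emph{does} admit a Hirsch foliation — immediate from the construction, since $M$ was defined as the mapping-torus-type glue-up of an $n$-punctured-disk fibration.

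The main obstacle I expect is step (2): pinning down exactly which symmetry is needed to get a second leaf-conjugacy class and then certifying its absence. One must extract from the proof of Theorem \ref{t.maint} the precise ``exchange'' move — presumably it amounts to an orientation-preserving self-homeomorphism of $M$ carrying the Hirsch splitting torus to itself while reversing the inner/outer sides, compatibly with the affine gluing — and then show our $M$ has none. The cleanest route is to make the relevant JSJ piece hyperbolic and invoke Mostow rigidity plus a computation (or a citation) showing the isometry group does not contain the required involution; alternatively, use a homological obstruction (the gluing matrix's invariants distinguishing the two tori). A subtlety is that we only need to rule out homeomorphisms \emph{isotopic to the identity} carrying one foliation to the other — but Theorem \ref{t.maint}'s dichotomy already packages the non-isotopic-to-identity symmetries into the ``$2$'' case, so showing that case fails for $M$ is exactly showing the symmetry is absent. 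Finally one should double-check that the affine (as opposed to merely topological) refinement of the gluing does not accidentally reintroduce a second class; this is a finite check on the affine parameter, handled as in Section \ref{s.prel}.
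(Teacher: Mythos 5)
There is a genuine gap, and it lies in the central idea of your steps (1)--(3). You assume that a second affine Hirsch foliation on $M$, if it exists, ``would have to be the image of the first under a symmetry that swaps the two tori of the splitting,'' and you therefore propose to kill it by showing $M$ has no such self-homeomorphism (Mostow rigidity, asymmetric JSJ pieces, etc.). That is not how the second foliation arises in this paper. In the dichotomy behind Theorem \ref{t.maint} (see Proposition \ref{p.oneside} and the end of Section \ref{s.proof}), the two classes correspond to two genuinely different punctured-disk fibrations $F_1$, $F_2$ of the \emph{same} complement $N$: one whose fibers meet $T^{out}$ in a single boundary component and one whose fibers meet $T^{in}$ in a single boundary component, exactly as in the exchangeably braided link picture of Section \ref{s.twohirsch}. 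The two foliations of a DEBL Hirsch manifold are not in general related by any homeomorphism of $M$ exchanging the two sides, and an exchangeably braided link need not admit a symmetry swapping its components. So ruling out a ``torus-swap'' involution does not rule out the second foliation, and your key obstruction does not address the right object. Moreover, even within your own strategy, step (2) --- producing a concrete braid and certifying the absence of the second fibration --- is exactly the part you leave open, so no verifiable example is actually constructed; your proposed starting point (the $2$-braid $\sigma_1$, i.e.\ Hirsch's original pattern) is also a poor candidate, since Hirsch's manifold \emph{is} a Hirsch manifold.

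For comparison, the paper's proof goes through the virtual-DEBL theorem rather than symmetry: it takes the explicit $3$-braid $b=(\sigma_1\sigma_2^{-1})^2$, builds the affine Hirsch foliation, and observes (via Subsection \ref{s.virp} and Theorem \ref{t.covering}) that if the resulting $M$ were a Hirsch manifold, then some braid of the form $b^{q_2}\tau^{p}$ ($\tau$ the full twist) would be exchangeable, hence would close to a trivial knot. The Bennequin inequality (Lemma \ref{l.Beinequalty}) gives $g(K)\geq 3|p|-1$, forcing $p=0$, and for $p=0$ the closure of $b^{q_2}$ is a fibered knot of positive genus, not the unknot --- a contradiction. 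The lesson is that the workable obstruction here is knot-theoretic (exchangeability forces an unknotted closure, which genus estimates exclude), not the non-existence of an involution of $M$. If you want to salvage your approach, you would need to replace the swap-symmetry criterion by a criterion detecting the second fibration of $N$ itself, which is essentially the exchangeability condition the paper exploits.
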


Proposition \ref{p.topchar}, Proposition \ref{p.nonisotopy}, the examples in Section \ref{s.twohirsch}
and Proposition \ref{p.notHirschmfd} imply that
there exist  closed oriented  $3$-manifolds $M_0$, $M_1$ and $M_2$ so that,
\begin{itemize}
  \item $M_0$ doesn't admit any affine Hirsch foliation;
  \item $M_1$ admits exactly one affine Hirsch foliation;
  \item $M_2$ is a Hirsch manifold, i.e. $M_2$ admits two non-isotopically leaf-conjugate affine Hirsch foliations.
\end{itemize}
This means that every case in Theorem \ref{t.maint} can be realized (in some closed $3$-manifold).

We can see that the results (in particular, Proposition \ref{p.nonisotopy} and Theorem \ref{t.maint}) in this paper give a satisfying response to classify all Hirsch foliations.
After these results, we can reduce classifying all Hirsch foliations to a  classical problem in one dimensional dynamical systems:
classifying degree $n$ ($n\geq 2$) endomorphisms \footnote{An endomorphism on $S^1$ means
a monotonic continuous map on $S^1$} on $S^1$ up to conjugacy. More details can be found in Remark \ref{r.HirvsaffHir}.

\section{Preliminaries}
\label{s.prel}

\begin{definition}\label{d.hirsch}
Let $\cH$ be a codimension $1$ foliation on a   closed oriented $3$-manifold $M$.
$\cH$ is called a \emph{Hirsch foliation}
if there exists a torus $T$ embedded into $M$ so that,
\begin{enumerate}
  \item the path closure of $M-T$, $N$, is a compact oriented $3$-manifold with two tori $T^{out}$ and $T^{in}$ as the boundary;
  \item $\cH\mid_N$ is an $n$-punctured disk fibration on $N$ such that each fiber is transverse to $\partial N$;
  \item every leaf in $\cH$ is orientable.
\end{enumerate}
\end{definition}

By Definition \ref{d.hirsch}, a Hirsch foliation $\cH$ on a closed oriented $3$-manifold $M$  can be constructed as follows
(see Figure \ref{f.defhirschfoliation} for an illustration).

 \begin{figure}[htp]
\begin{center}
  \includegraphics[totalheight=5.5cm]{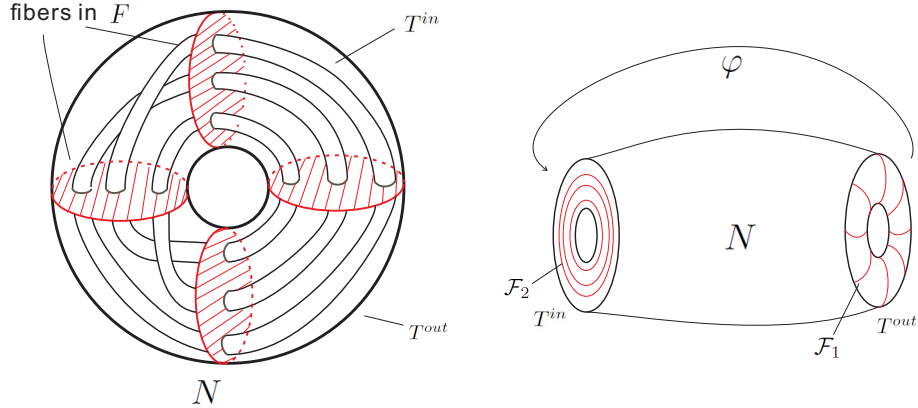}
  \caption{an example about the constructions of Hirsch foliations}\label{f.defhirschfoliation}
  \end{center}
\end{figure}

\begin{itemize}
  \item Choose an $n$-braid $b$ whose closure is a knot, $b$ also can be used to represent a diffeomorphism
  on an $n$-punctured disc $\Sigma$.
  \item We denote the mapping torus of $(\Sigma, b)$ by $N$. Notice that $F= \{\Sigma \times \{\star\}\}$ provides a natural
$n$-punctured disk fibration on  $N$, which provides $T^{in}$ and $T^{out}$ two $S^1$-fibration structures $\cF_1$ and $\cF_2$
respectively.
  \item We suppose that $\Sigma$ is oriented.
Then $\Sigma$ naturally induces an orientation on
each fiber of $\cF_1$ and $\cF_2$.
  We also give an orientation on $N$ which naturally induces two orientations on $T^{out}$ and $T^{in}$ respectively.
  \item Build an orientation-preserving homeomorphism $\varphi: T^{out} \to T^{in}$ which maps every fiber of $\cF_1$ to a fiber of $\cF_2$
  and preserves the corresponding orientations\footnote{$\varphi:T^{out} \to T^{in}$ preserves the orientations since the glued manifold $M$ should be orientable. $\varphi$ preserves the corresponding orientations of the fibers of $\cF_1$ and $\cF_2$ since every leaf in the glued foliation $\cH$ should be orientable}.
  Let $M =N\setminus x\sim \varphi(x), x\in T^{out} N$.
  Then the  $n$-punctured disk fibration $F$ on $N$ naturally induces a Hirsch foliation $\cH$ on $M$ by $\varphi$.
  \end{itemize}

There are some further comments about Hirsch foliations which are useful in the rest of the paper:
\begin{itemize}
  \item $N$ also can be  obtained  by   removing  a small solid torus $V_0$ from a solid torus $V$ where
$V_0$ is a small tubular neighborhood of the closure of  a braid $b$;
  \item there is a natural quotient map $P:N \to S^1$ where $S^1$ is the fiber quotient space of $F$;
  \item $\varphi$ induces a  map $\varphi_2: S^1 \to S^1$, which is called the \emph{projective holonomy map} of $\cF$
  relative to the embedded torus $T$.
\end{itemize}

\begin{definition}\label{d.affinehirsch}
Let $\cH$ be a Hirsch foliation on a   closed $3$-manifold $M$.
$\cH$ is called an \emph{ affine Hirsch foliation}
if the projective holonomy map of $\cF$
relative to an embedded torus $T$ transverse to $\cH$ is topologically conjugate to the map $z^n$ on $S^1$ for some
$n \in \mathbb{N}$, $n\geq 2$. Here we can parameterize $S^1$ by $S^1= \{z\mid |z|=1, z\in \mathbb{C}\}$.
\end{definition}

In 1985, Morton \cite{Mor} introduced exchangeably braided links.
 An \emph{exchangeably braided  link} is a two-component link $L=K_1 \cup K_2$  which admits a kind of very nice symmetry:  each component is braided relatively to the other one, \emph{i.e.} $K_1$ is a closed braid $\widetilde{b_1}$ in the solid torus $S^3 - K_2$ and
 $K_2$ is a closed braid $\widetilde{b_2}$ in the solid torus $S^3 -K_1$.  Such a braid $b_1$ is called an \emph{exchangeable braid}.
 Automatically, every exchangeably braided link $L$ can be regarded as the union of the closure of an exchangeable braid and
an axis of the closed braid.

Morton \cite{Mor}  showed many nice properties of exchangeably braided  links. For instance, he built
some necessary and some sufficient conditions for exchangeability. For instance, he showed that the exchangeable braids  belong to a family of braids introduced by Stallings \cite{St}.

Let's briefly introduce Stallings' braids  and the relationships between Stallings' braids and exchangeable braids.
Certainly, the closure of an exchangeable braid is a trivial knot. But the converse is not true, \emph{i.e.}
if the closure of a braid $b$ is a trivial knot, $b$ is not necessarily  an exchangeable braid.
Actually, Stallings \cite{St} introduced a family of braids so that for every braid $b$ in this family satisfies:
\begin{enumerate}
  \item $\widetilde{b}$ is a trivial knot;
  \item there is a disk $D$ spanning $\widetilde{b}$
  which intersects the axis at exactly $n$-points.
\end{enumerate}
Morton called them \emph{Stallings braids}.
The set of Stallings braids is a proper subset of the union of the braids whose closure is a trivial knot.
In \cite{Mor}, Morton constructed a braid $\omega=\sigma_3 \sigma_2 \sigma_3^{-1} \sigma_2 \sigma_1^{-1} \sigma_2 \sigma_1 \in B_4$
\footnote{Here and below, the notations for braids are standard in braid theory
(see, for instance, Birman \cite{Bi})}
which is a stalling  braid but not an exchangeable braid.
Therefore, the union of the exchangeable braids is a proper subset of
the union of  Stallings braids.

Stallings braids have a very nice characterization \footnote{A careful reader can find the characterization in the beginning of  \cite [Section $2$] {Mor}}.
Under this characterization, it is easy to obtain the following finiteness property.

\begin{proposition}\label{p.finitestallings}
For a given $n\in \mathbb{N}$, up to conjugacy, there are finitely many Stallings braids with strands number $n$.
\end{proposition}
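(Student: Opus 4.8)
The plan is to extract from Morton's work, or from Stallings' original characterization, the precise combinatorial structure of a Stallings braid and then observe that there are only finitely many such structures for fixed strand number $n$. Recall that a Stallings braid $b \in B_n$ is characterized by the existence of a disk $D$ spanning the unknot $\widetilde{b}$ that meets the braid axis in exactly $n$ points; equivalently (this is the characterization referenced at the start of Morton's \cite{Mor}, Section 2), the fibration of the unknot complement $S^3 \setminus \widetilde{b}$ by disks can be isotoped so that the braid axis sits on one fiber disk as a properly embedded arc-and-loop configuration, and the monodromy is read off from how the axis, together with a system of $n-1$ arcs cutting $D$ into a disk, is carried around. The key finiteness input is that all of this combinatorial data lives in a disk with $n$ marked points (the intersections with the axis) and is constrained by having complexity bounded in terms of $n$.

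First I would recall the Stallings characterization in the form Morton uses: a braid $b$ on $n$ strands is a Stallings braid if and only if it arises from an embedding of the $n$-punctured disk $\Sigma$ (the spanning disk $D$ cut along the axis) back into the solid torus complement of the axis in a fibered, essentially combinatorial way. The upshot of that description is that a Stallings braid is determined, up to conjugacy in $B_n$, by a bounded amount of combinatorial data: the way $n$ parallel strands are joined up by $n-1$ ``bands'' (the cuts of the disk), together with finitely many crossing choices. Second, I would argue that for fixed $n$ this data set is finite: there are finitely many isotopy classes of the relevant arc systems on an $n$-punctured disk (this is a standard fact — a disk with $n$ marked points has only finitely many isotopy classes of any fixed small number of disjoint essential arcs), and for each such arc system only finitely many choices remain to pin down the braid up to conjugacy. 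Hence up to conjugacy only finitely many Stallings braids with strand number $n$ exist.

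An alternative, perhaps cleaner, route avoids unpacking the characterization and instead uses a word-length bound: show that every Stallings braid on $n$ strands is conjugate to one whose word in the standard generators $\sigma_1^{\pm 1},\dots,\sigma_{n-1}^{\pm 1}$ has length bounded by an explicit function of $n$. This would follow because the spanning disk $D$ meeting the axis in exactly $n$ points gives a presentation of the unknot of ``disk complexity'' $n$, and a bounded-complexity fiber surface forces the monodromy to have bounded expression length; Morton's analysis essentially provides such a bound. Since for each fixed $n$ there are only finitely many braid words of bounded length, there are only finitely many conjugacy classes, proving the proposition. Either way, the statement is really a corollary of the structural description of Stallings braids rather than something requiring new machinery.

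The main obstacle will be making the ``bounded combinatorial data'' or ``bounded word length'' claim precise without reproducing Morton's entire Section 2 — i.e., correctly invoking the Stallings characterization and verifying that the quantity it bounds (number of axis intersections, equal to the strand number) genuinely controls the braid up to conjugacy. In particular one must be careful that isotopies of the spanning disk that change the picture but not the braid are quotiented out, so that ``finitely many pictures'' really does give ``finitely many conjugacy classes''; this is exactly where the finiteness of isotopy classes of bounded arc systems on a punctured disk enters and must be cited or argued. Once that correspondence is in hand, the counting step is immediate.
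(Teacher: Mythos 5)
Your overall strategy --- reduce everything to Morton's characterization of Stallings braids and then count the bounded combinatorial data it provides --- is the same as the paper's, which in fact offers nothing beyond the remark that the finiteness follows easily from the characterization at the beginning of Section 2 of \cite{Mor}. So at the level of strategy there is no disagreement.

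However, the fact you lean on to make the count finite is false as stated. A disk with $n$ marked points does \emph{not} carry only finitely many isotopy classes of a bounded number of disjoint essential arcs: already for $n=3$, applying powers of a Dehn twist about a curve enclosing two of the marked points to a single essential arc produces infinitely many pairwise non-isotopic arcs. What is true (by the change-of-coordinates principle) is that there are only finitely many such arc systems up to \emph{homeomorphism} of the marked disk, i.e. finitely many orbits under the braid/mapping class group action. Your count must therefore be run modulo that action, and you then need the converse direction you do not address: that two spanning-disk/axis configurations related by an orientation-preserving homeomorphism of the pair yield \emph{conjugate} braids. This is exactly where the standard correspondence between conjugacy classes of $n$-braids and isotopy classes of the closed braid together with its axis (as a link in $S^3$) must be invoked; with it, ``finitely many topological types of configuration'' does give ``finitely many conjugacy classes,'' and the argument is repaired. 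As written, though, the central finiteness input is simply wrong, and you have flagged the isotopy-versus-conjugacy issue without resolving it correctly. I would also drop the alternative word-length route: Morton provides no length bound for Stallings braids, and such a bound is a consequence of the proposition (take the longest minimal representative among the finitely many classes) rather than an available input, so that route is unsubstantiated and, if pressed, circular.
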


The following corollary is an immediate consequence of this proposition (this corollary exactly is  \cite [Corollary 1.2] {Mor}).

\begin{corollary}\label{c.finiteness}
\begin{enumerate}
  \item Up to  conjugacy,
there are finitely many exchangeable braids with strand number $n$.
  \item Up to isotopy, there are finitely many exchangeably braided links
with linking number $n$.
\end{enumerate}
 \end{corollary}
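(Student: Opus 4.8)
The plan is to obtain both items as bookkeeping consequences of Proposition~\ref{p.finitestallings}, using the two facts recalled above: (a) Morton's theorem that every exchangeable braid is a Stallings braid, and (b) the description of an exchangeably braided link as the union of the closure of an exchangeable braid with a braid axis. The only points requiring care are that the relevant notions descend to conjugacy classes of braids, and that the ``strand number'' of item (1) and the ``linking number'' of item (2) are the same index.

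\emph{Item (1).} Fix $n\in\NN$. I would first observe that being a Stallings braid and being an exchangeable braid are both invariant under conjugacy in $B_n$: all the defining conditions (the closure is the unknot, a spanning disk meets the axis in $n$ points, the axis is itself a closed braid about the closure) are phrased in terms of the ambient isotopy type of the closed braid together with its axis, and conjugate braids have ambient isotopic closed-braid-plus-axis pictures. Hence, by Morton's theorem, the set of conjugacy classes of exchangeable $n$-braids is contained in the set of conjugacy classes of Stallings $n$-braids, which is finite by Proposition~\ref{p.finitestallings}. This gives (1).

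\emph{Item (2).} An exchangeably braided link $L=K_1\cup K_2$ with linking number $n$ can be written as $L=\widetilde{b}\cup A$, where $A$ is an unknotted axis, $\widetilde{b}$ is the closure in the solid torus $S^3-A$ of an exchangeable braid $b$, and the number of strands of $b$ equals the number of intersection points of $\widetilde{b}$ with a meridian disk of $S^3-A$, hence equals the linking number of $K_1$ and $K_2$; so $b$ is an exchangeable $n$-braid. Since braid conjugation is realized by an ambient isotopy of $S^3$ supported in the complement of the axis, the isotopy type of $L$ in $S^3$ depends only on the conjugacy class of $b$. Item (1) then provides only finitely many choices for this conjugacy class, so there are only finitely many isotopy types of exchangeably braided links with linking number $n$, proving (2).

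I do not expect a genuine obstacle here: the substance is entirely in Morton's inclusion of exchangeable braids into Stallings braids together with Proposition~\ref{p.finitestallings}, and what remains is the two routine verifications above. If pressed for the mildly delicate point, it is checking that two presentations $L=\widetilde{b}\cup A$ and $L=\widetilde{b'}\cup A'$ of the same link force $b$ and $b'$ to have the same strand number; but this is automatic since each strand number equals the linking number of the two components of $L$, which is an invariant of $L$. Thus the two finite families indexed by ``strand number $n$'' and ``linking number $n$'' match up, and the corollary follows.
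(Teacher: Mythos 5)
Your proposal is correct and follows essentially the same route as the paper, which treats the corollary as an immediate consequence of Proposition~\ref{p.finitestallings} together with Morton's inclusion of exchangeable braids among Stallings braids and the description of an exchangeably braided link as a closed exchangeable braid together with its axis. You merely spell out the routine conjugacy-invariance and axis bookkeeping that the paper leaves implicit, so there is nothing to object to.
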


\section{Proof of Theorem \ref{t.maint}}
\label{s.proof}

This section is devoted to proving Theorem  \ref{t.maint}. We will prove an equivalent form of the theorem:
if a closed $3$-manifold $M$ admits a Hirsch foliation $\cF$, then  up to isotopic leaf-conjugacy,
$M$ admits at most two affine Hirsch foliations.

 \begin{figure}[htp]
\begin{center}
  \includegraphics[totalheight=7cm]{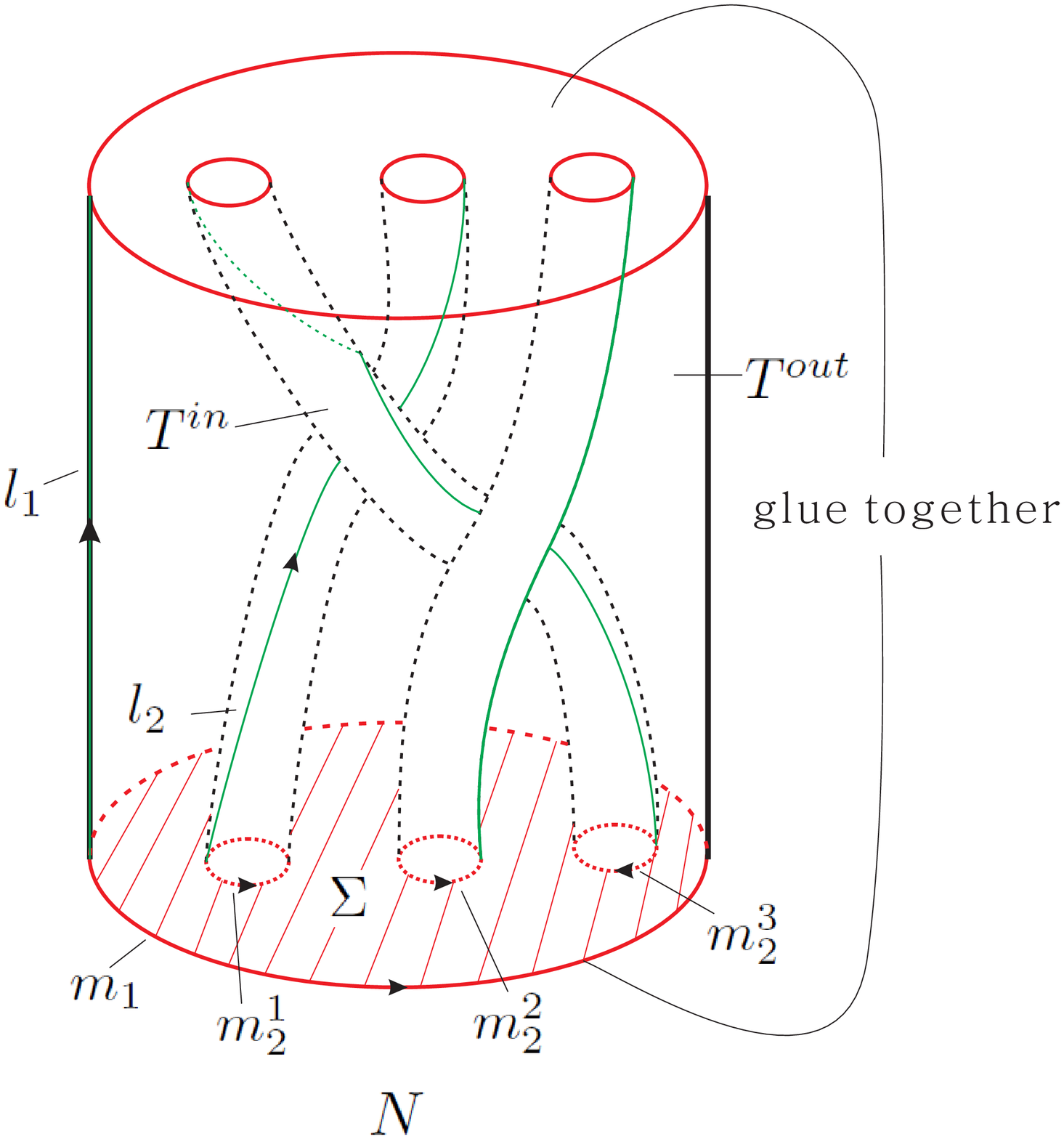}
  \caption{some notations in the proof of Theorem \ref{t.maint}}\label{f.notationproofthm1}
  \end{center}
\end{figure}

First, we give more notations and  parameters (see Figure \ref{f.notationproofthm1} as an illustration).

\begin{itemize}
  \item Assume that $i_1: T^{out} \to N$ and $i_2: T^{in} \to N$ are the associated embedding maps,
   $i_{1,\star}:H_1(T^{out}) \to H_1(N)$ and $i_{2,\star}:H_1(T^{in}) \to H_1(N)$ are the corresponding induced homomorphisms.
  \item We denote the oriented simple closed curve $\Sigma \cap T^{out}$ by $m_1$ and denote  the  oriented simple closed curves
  $\Sigma\cap T^{in}$ by $m_2^1,\dots, m_2^n$.
 Here, the orientations of the simple closed curves are induced by $\Sigma$.
  Sometimes, we also use $m_2$ to represent $m_2^1$.
  \item $l_1$ is chosen to be an oriented simple closed curve in $T^{out}$ which intersects  $m_1$ at one point so that
  $(m_1,l_1)$ endows $T^{out}$ an orientation which is coherent to the tori of $T^{out}$.
  \item $l_2$ is chosen to be the unique (up to isotopy in $T^{in}$) oriented simple closed curve in $T^{in}$ so that,
        \begin{enumerate}
          \item $l_2$ intersects $m_2$ at one point;
          \item $(m_2,l_2)$ endows $T^{in}$ an orientation  which is coherent to the orientation of $T^{in}$;
          \item $i_{2,\star}([l_2])= n\cdot i_{1,\star} ([l_1])$.
        \end{enumerate}
   \item If there are two oriented simple closed curves $m$ and $l$ on a torus $T^2$ which intersect at one point,  we will use
   $p m +q l$  ($p$ and $q$ are coprime) to represent an oriented simple closed curve on $T^2$ which
   wraps $p$ times around $m$ and $q$ times around $l$.
\end{itemize}

The existence and the uniqueness of $l_2$ can be shown by a short computation on homology, as follows.
If we choose a simple closed curve $l_2' \in T^{in}$ so that $l_2'$  intersects $m_2$ at one point and the orientation given by $(m_2, l_2')$
is coherent to the orientation of $T^{in}$ given by $N$,
then $i_{2,\star} ([l_2'])=n\cdot i_{1,\star} ([l_1])+x\cdot i_{2,\star} ([m_2])$ for some $x\in \mathbb{Z}$.
Since $i_{2,\star} ([m_2])$ is nonzero in $H_1 (N)$, then up to isotopy,
there is a unique simple closed curve $l_2 = l_2' - x\cdot m_2$ in $T^{in}$ so that   $i_{2,\star}([l_2])= n\cdot i_{1,\star} ([l_1])$.
To simplify, we will use $[m_j]$ and $[l_j]$ $(j=1,2)$ to represent the corresponding elements in $H_1 (N)$.

We collect  some information  about $H_1(N)$ as follows, which can be obtained  by Alexander duality.

\begin{lemma}
\label{l.homologyN}
$H_1 (N)\cong \mathbb{Z} \oplus \mathbb{Z}$ and is generated by $[l_1]$ and $[m_2]$.
Moreover,
$[m_1]=n[m_2]$ and $[l_2]=n[l_1]$.
\end{lemma}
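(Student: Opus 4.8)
The plan is to compute $H_1(N)$ via Alexander duality, since $N$ is the complement of a tubular neighborhood of the closure of the braid $b$ inside a solid torus $V$, equivalently the complement (in $S^3$) of a two-component link consisting of the closed braid $\widetilde{b}$ together with the braid axis $A$ (the core of the complementary solid torus $S^3 \setminus V$). First I would make this identification precise: $N \cong S^3 \setminus (\mathcal{N}(\widetilde{b}) \cup \mathcal{N}(A))$, so that $T^{out}$ is the boundary of a neighborhood of the axis $A$ and $T^{in}$ (the union of $n$ parallel copies, or rather one torus bounding the punctured disk on its inside) records the closed braid. Since $\widetilde{b}$ is a knot (the closure of $b$ is a knot by hypothesis) and $A$ is an unknot, $L = \widetilde{b} \cup A$ is a two-component link in $S^3$.

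Next I would invoke Alexander duality: for a link complement $N = S^3 \setminus \mathcal{N}(L)$ with $L$ having $k$ components, $H_1(N) \cong \mathbb{Z}^k$, generated by the meridians of the components. Here $k = 2$, so $H_1(N) \cong \mathbb{Z} \oplus \mathbb{Z}$. Now I must identify the stated generators. The meridian of the axis $A$ is $[l_1]$: indeed $l_1 \subset T^{out}$ is the curve transverse to $m_1 = \Sigma \cap T^{out}$, and $m_1$ is the longitude of $A$ (it bounds the fiber disk $\Sigma$ punctured $n$ times in $N$), so $l_1$ is the meridian of $A$. Similarly, the meridian of the knot $\widetilde{b}$ is precisely $m_2 = m_2^1 = \Sigma \cap T^{in}$, a small loop around one strand. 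So $H_1(N)$ is generated by $[l_1]$ and $[m_2]$.

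Then I would pin down the two relations $[m_1] = n[m_2]$ and $[l_2] = n[l_1]$. For the first: $m_1$ is the longitude of the axis $A$, and it bounds the punctured disk $\Sigma$ in $N$; in $S^3 \setminus \mathcal{N}(A)$, $m_1$ bounds an honest disk $D$ meeting $\widetilde{b}$ in exactly $n$ points (the braid has $n$ strands), all with the same sign since the strands run coherently through the braid axis. Hence in $H_1(N)$ we get $[m_1] = n[m_2]$ (the boundary of $D$ punctured $n$ times is homologous to $n$ meridians of $\widetilde{b}$). For the second relation, note $l_2$ was \emph{defined} by the condition $i_{2,\star}([l_2]) = n \cdot i_{1,\star}([l_1])$, and the existence/uniqueness argument already given in the excerpt shows such $l_2$ exists and is unique up to isotopy; so $[l_2] = n[l_1]$ holds by construction. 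I would also double-check the linking-number bookkeeping: the linking number of $\widetilde{b}$ with $A$ equals $n$, which is exactly what makes $l_2$ (the curve on $T^{in}$ that is nullhomologous in $S^3 \setminus \mathcal{N}(\widetilde{b})$ up to the axis contribution) satisfy this relation, and it confirms consistency of $[m_1] = n[m_2]$.

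The main obstacle I anticipate is purely bookkeeping: correctly matching the paper's geometrically-defined curves $m_1, l_1, m_2, l_2$ (defined via the fiber $\Sigma$, the coherent-orientation conventions, and the defining homological condition on $l_2$) with the meridian/longitude system of the link $L = \widetilde{b}\cup A$, and getting every sign and the factor $n$ right. The Alexander duality computation of $H_1(N) \cong \mathbb{Z}\oplus\mathbb{Z}$ itself is standard; the relations $[m_1]=n[m_2]$ and $[l_2]=n[l_1]$ require care about why all $n$ intersection points of the spanning disk with $\widetilde{b}$ (resp. the defining condition on $l_2$) contribute with the same sign, which ultimately comes from the strands of a braid being coherently oriented — I would make that orientation argument explicit rather than leave it implicit.
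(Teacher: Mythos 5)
Your proposal is correct and takes the same route as the paper, whose entire justification is the remark that the lemma ``can be obtained by Alexander duality'': identifying $N$ with the complement of the two-component link $\widetilde{b}\cup A$ (closed braid plus axis) and taking meridian generators is exactly the argument the paper leaves implicit, and your derivations of $[m_1]=n[m_2]$ from the spanning (punctured) disk and of $[l_2]=n[l_1]$ from the defining condition on $l_2$ are right. One minor bookkeeping caveat: $l_1$ is only determined up to adding multiples of $m_1$, so it need not literally be the meridian of $A$, but since the discrepancy is a multiple of $[m_1]=n[m_2]$ the pair $([l_1],[m_2])$ still generates $H_1(N)\cong\mathbb{Z}\oplus\mathbb{Z}$, so your conclusion stands.
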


The following lemma shows that the set of all punctured disk fibrations on $N$ is quite limited.
\begin{lemma}\label{l.sn}
Let $F$ be a $s$-punctured disk fibration on $N$. Assume that $\Sigma$ is a fiber of $F$ whose
boundary is the union of a simple closed curve $c_1 \in T^{out}$ and $s$ pairwise parallel and pairwise disjoint  simple closed
curves $c_2^1, \dots, c_2^s$ in $T^{in}$ (sometimes we  also use $c_2$ to represent $c_2^1$).
Assume that $c_i =p_i m_i + q_i l_i$ ($i=1,2$)
where $p_i$ and $q_i$  are coprime.
Then there exists an orientation on $\Sigma$ which induces an orientation on $c_1$ and an orientation on $c_2$
so that
 $s=n$, $p_1 =p_2 =1$ and $q_1=n^2 q_2$.
\end{lemma}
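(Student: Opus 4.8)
The plan is to recast the fibration $F$ as a cohomology class, read off the boundary slopes $c_1,c_2$ from Lemma \ref{l.homologyN}, and then exploit the observation that capping the punctures of the fiber turns $N$ into a solid torus.

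I would first encode $F$ by the class $\alpha\in H^1(N;\ZZ)\cong H_2(N,\partial N;\ZZ)$ Poincar\'e--Lefschetz dual to the fiber $[\Sigma]$; being the fiber of a fibration, $\Sigma$ is incompressible and $\alpha$ is primitive. By Lemma \ref{l.homologyN}, $H_1(N)\cong\ZZ[l_1]\oplus\ZZ[m_2]$, so $\alpha$ is recorded by $(u,v):=(\alpha([l_1]),\alpha([m_2]))$ with $\gcd(u,v)=1$; using $[m_1]=n[m_2]$ and $[l_2]=n[l_1]$ one gets $\alpha|_{T^{out}}=(nv,u)$ in the basis of $H^1(T^{out})$ dual to $(m_1,l_1)$ and $\alpha|_{T^{in}}=(v,nu)$ in the basis dual to $(m_2,l_2)$. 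For a fibration, the number of boundary circles of the fiber on a boundary torus $T$ equals the divisibility of $\alpha|_T$; the hypothesis that $\Sigma$ meets $T^{out}$ in a single curve therefore forces $\gcd(nv,u)=1$, hence $\gcd(n,u)=1$, while $s=\gcd(v,nu)=\gcd(v,n)$, which divides $n$. As $c_i$ is (up to sign) the primitive class annihilated by $\alpha|_{T_i}$, we read off $p_1=\pm u$, $q_1=\mp nv$, $p_2=\pm nu/s$, $q_2=\mp v/s$; moreover $c_1\cup c_2^1\cup\dots\cup c_2^s=\partial\Sigma$ bounds, so $[c_1]=s[c_2]$ in $H_1(N)$ once the orientation of $\Sigma$ is chosen compatibly with Lemma \ref{l.homologyN}, i.e.
\[
np_1=sp_2,\qquad q_1=sn\,q_2 .
\]
So the lemma is reduced to proving $s=n$ and $p_1=\pm1$; these then give $p_2=p_1$ and $q_1=n^2q_2$, and a final reversal of the orientation of $\Sigma$ (if needed) yields $p_1=p_2=1$.

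Next I would cap the punctures. Glue a solid torus $W$ to $N$ along $T^{in}$ identifying its meridian with $c_2$, and set $M':=N\cup_{T^{in}}W$. A meridian disk of $W$ is bounded by $c_2$, which is a fiber of the circle fibration $T^{in}\to S^1$ induced by $F$, so $F$ extends to a fibration $M'\to S^1$ whose fiber is $\Sigma$ with its $s$ inner boundary circles capped by disks --- a disk, since $\Sigma$ is an $s$-punctured disk. A compact $3$-manifold that fibers over $S^1$ with disk fiber is a solid torus, so $M'$ is a solid torus, with $\partial M'=T^{out}$ and meridian $c_1$. In particular $H_1(M')\cong\ZZ$; but Dehn filling kills the filling slope, so $H_1(M')\cong H_1(N)/\langle[c_2]\rangle\cong\ZZ\oplus\ZZ/\gcd(n,p_2)$ (a short computation from Lemma \ref{l.homologyN}), whence $\gcd(n,p_2)=1$. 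Combined with $np_1=sp_2$ this gives $n\mid s$, and with $s\mid n$ we conclude $s=n$; then $p_2=p_1$ and $q_1=n^2q_2$.

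It remains to prove $p_1=\pm1$, equivalently that the meridian $c_1$ of the solid torus $M'$ differs from the meridian $m_1$ of the solid torus $V$ only by a twist along $l_1$. Here one uses that $N$ is the exterior of the closed braid $\widehat b$ inside the solid torus $V$, that $\widehat b$ has winding number $n\ge2$ and is not contained in a ball (its boundary torus being incompressible, by Corollary \ref{c.incomp}), and that filling $T^{in}$ along the non-meridional slope $c_2$ again produces a solid torus --- i.e. $\widehat b$ admits a non-trivial solid-torus Dehn surgery. By Gabai's classification of knots in solid tori with this property, $\widehat b$ is a $1$-bridge braid, for which both the surgery slope and the framing it induces on $\partial V=T^{out}$ are explicit; tracking this framing gives $c_1=m_1+q_1l_1$, so $p_1=1$. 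Alternatively --- and more in the spirit of the rest of the paper --- one can put $\Sigma$ in taut position relative to the standard $n$-punctured-disk fibration $\{\Sigma_0\}$ of $N$, so that $\Sigma\cap\Sigma_0$ is a system of essential arcs; these cut $\Sigma$ into disks (both surfaces being fibers), and accounting for the resulting cell structure of $\Sigma$ against the intersection data $|m_1\cdot c_1|=|q_1|$ and $|m_2\cdot c_2|=|q_2|$ forces $p_1=1$ and simultaneously recovers $q_1=n^2q_2$. Controlling the framing on $T^{out}$ in this last step is the part I expect to be the genuine obstacle; everything preceding it is either $\ZZ$-linear bookkeeping or an immediate consequence of the capped fiber being a disk.
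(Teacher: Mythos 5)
Your setup and the first two thirds of the argument are sound, and they run parallel to the paper's own proof: the paper also caps the inner boundary circles by Dehn filling $T^{in}$ along $c_2$, notes that the punctured--disk fibration extends with disk fibers so that the filled manifold is a solid torus, and then computes first homology. Your cohomological bookkeeping (boundary circles counted by the divisibility of $\alpha|_T$, the relation $[c_1]=s[c_2]$, and $H_1$ of the filling being $H_1(N)/\langle[c_2]\rangle\cong\ZZ\oplus\ZZ/\gcd(n,p_2)$, forcing $\gcd(n,p_2)=1$ and hence $s=n$, $p_1=p_2$, $q_1=n^2q_2$) is correct and if anything cleaner than the paper's.

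The genuine gap is exactly where you flag it: the normalization $p_1=p_2=\pm1$ is part of the lemma's conclusion (and is what Lemma \ref{l.fixonT} and Proposition \ref{p.oneside} later rely on), and your argument never establishes it. Information of the kind you extract cannot: in your own parametrization $p_1=p_2=\pm u$ with $u=\alpha([l_1])$, and nothing you derive ($np_1=sp_2$, $q_1=snq_2$, the coprimality conditions, $\gcd(n,p_2)=1$) excludes $|u|\ge 2$. The paper closes this by squeezing a stronger statement out of the capped solid torus $U$: it claims that $H_1(U)$ is not merely infinite cyclic but is generated by the image of $[l_1]$, so the presentation $\langle[m_2],[l_1]\mid p_2[m_2]+nq_2[l_1]=0\rangle$ forces $p_2=\pm1$; after orienting $\Sigma$ so that $p_2=1$, the relation $[c_1]=s[c_2]$ gives $s=p_1n$ and $q_1=p_1n^2q_2$, and coprimality of $p_1,q_1$ yields $p_1=1$, $s=n$, $q_1=n^2q_2$. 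Whatever one thinks of how tersely the paper justifies that generating claim, it is precisely the extra input your homology-only computation lacks. Your proposed substitutes do not fill the hole: the appeal to Gabai's classification of knots in solid tori admitting solid-torus surgeries is far heavier machinery than anything in the paper, it tacitly assumes $q_2\neq0$ (the case $q_2=0$ is easy, but should be separated), and the decisive step --- ``tracking the framing gives $c_1=m_1+q_1l_1$'' --- is exactly the assertion to be proved, not something you derive; the ``taut position'' alternative is a strategy sketch, not an argument. As written, the proposal proves $s=n$, $p_1=p_2$ and $q_1=n^2q_2$, but not $p_1=p_2=1$.
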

\begin{proof}
First let us prove that $p_2 =1$.
If we glue a solid torus $V$ to $N$ by a gluing map $\psi: \partial V \to T^{in}$ so that $c_2$
bounds a disk in $V$, then the glued $3$-manifold $U$ is homeomorphic to a solid torus.
On the one hand, it is obvious that $H_1 (U) = \langle[l_1]\rangle\cong \mathbb{Z}$.
On the other hand, $H_1(U)= \langle[m_2], [l_1]\mid p_2 [m_2]+ q_2 [l_2]=p_2 [m_2]+ nq_2[l_1]=0\rangle$.
Therefore, $p_2 =\pm1$.

Secondly, we can  endow $\Sigma$ an orientation which induces two orientations on $c_1$ and $c_2$ respectively
so that $p_2=1$. These orientations satisfy the requirements in the lemma and will be used in the following of the proof.

In the end,
we will prove that $p_1 = 1$ and $s = n$. Since the union of $c_2^1, \dots, c_2^s$ and $c_1$
bound a $s$-punctured disk $\Sigma$, $[c_1]=s[c_2]$. Equivalently, $p_1 [m_1]+q_1 [l_1]= s([m_2]+nq_2[l_1])$,
then, $q_1=sq_2 n$, $p_1 n=s$. We have $q_1= p_1 q_2 n^2$. Recall that $p_1$ and $q_1$ are coprime, therefore,
$p_1=1$, $s=n$ and $q_1= n^2 q_2$.
\end{proof}
\begin{remark}
Actually, for every $q_2 \in \mathbb{Z}$, there always exists an associated punctured disk fibration $F$ on $N$.
One can construct them by a standard surgery in low dimensional topology (for the surgery, see for instance,  Jaco \cite [III.14] {Ja}).
\end{remark}

From now on, $c_1$, $c_2$ and $\Sigma$ are oriented as Lemma \ref{l.sn}.

\begin{lemma}
\label{l.fixonT}
Let $\varphi:T^{out} \to T^{in}$ be a diffeomorphism so that $\varphi (m_1)= m_2$ and $\varphi (l_1)= l_2+ ym_2$
($y\in \mathbb{Z}$). If $\varphi(c_1)$ is isotopic to $c_2$ in $T^{in}$, then $c_1$ is isotopic to $m_1$ in $T^{out}$
and $c_2$ is isotopic to $m_2$ in $T^{in}$.
\end{lemma}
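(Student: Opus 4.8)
The plan is to work entirely in the homology group $H_1(N)\cong\mathbb{Z}\oplus\mathbb{Z}$, using Lemma \ref{l.homologyN} and Lemma \ref{l.sn}, and to exploit the fact that an isotopy on a torus preserves homology classes. First I would record what the hypotheses give on the level of $H_1(N)$. Writing $c_1=m_1+q_1 l_1$ and $c_2=m_2+q_2 l_2'$ in the bases adapted to $T^{out}$ and $T^{in}$ (by Lemma \ref{l.sn} the $m$-coefficients are both $1$ after suitable orientation, and $q_1=n^2 q_2$), and using $[m_1]=n[m_2]$, $[l_2]=n[l_1]$, I get $[c_1]=n[m_2]+n^2 q_2[l_1]$ in $H_1(N)$. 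On the other hand $\varphi(m_1)=m_2$ and $\varphi(l_1)=l_2+ym_2$, so $[\varphi(c_1)]=[m_2]+q_2[l_2]+q_2 y[m_1]=[m_2]+nq_2[l_1]+nq_2 y[m_2]$ as an element of $H_1(T^{in})$ pushed into $H_1(N)$; more cleanly, inside $H_1(T^{in})=\langle[m_2],[l_2]\rangle$ we have $\varphi(c_1)=(1+q_2 y\cdot n? )\,m_2+\dots$ — I would instead compute $\varphi(c_1)$ directly on $T^{in}$: $\varphi(c_1)=\varphi(m_1+q_1 l_1)=m_2+q_1(l_2+ym_2)=(1+q_1 y)m_2+q_1 l_2$.

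Now the key step: $\varphi(c_1)$ isotopic to $c_2$ in $T^{in}$ means they are equal as primitive homology classes in $H_1(T^{in})$, i.e. $(1+q_1 y)m_2+q_1 l_2=\pm(m_2+q_2 l_2)$ in $H_1(T^{in})$. Comparing the $l_2$-coefficients gives $q_1=\pm q_2$. But Lemma \ref{l.sn} forces $q_1=n^2 q_2$, and since $n\geq 2$ this is only possible if $q_2=0$, hence $q_1=0$ as well. Therefore $c_2=m_2$ (up to isotopy) in $T^{in}$ and $c_1=m_1+q_1 l_1=m_1$ (up to isotopy) in $T^{out}$, which is exactly the conclusion. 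I should be a little careful about the sign ambiguity and about the fact that "isotopic on a torus" is equivalent to "equal primitive homology class up to sign" — that is standard, but I would state it explicitly.

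The step I expect to be the main (minor) obstacle is bookkeeping the orientations and the identification $[l_2]=n[l_1]$ correctly so that the coefficient comparison is clean: one must use Lemma \ref{l.sn}'s normalization ($p_1=p_2=1$, $q_1=n^2q_2$) with the same orientations on $c_1,c_2,\Sigma$, and one must make sure the element $l_2$ being used is genuinely the one satisfying $i_{2,\star}([l_2])=n\cdot i_{1,\star}([l_1])$ so that the relation $[l_2]=n[l_1]$ in $H_1(N)$ is available. Once the coefficients are lined up, the argument is the one-line observation that $n^2 q_2=\pm q_2$ with $n\geq 2$ forces $q_2=0$. I would also remark that the hypothesis $\varphi(l_1)=l_2+ym_2$ is used only through the computation $\varphi(c_1)=(1+q_1y)m_2+q_1l_2$, and the value of $y$ is irrelevant to the conclusion.
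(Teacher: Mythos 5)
Your proposal is correct and follows essentially the same route as the paper: compute $\varphi_{\star}([c_1])=(1+q_1y)[m_2]+q_1[l_2]$ in the $(m_2,l_2)$ basis of $H_1(T^{in})$, equate it with $[c_2]=[m_2]+q_2[l_2]$ (isotopic curves on the torus having the same class, up to the sign you note, which is harmless), and combine $q_1=\pm q_2$ with the relation $q_1=n^2q_2$ and $n\geq 2$ from Lemma \ref{l.sn} to force $q_1=q_2=0$. The abandoned detour through $H_1(N)$ is unnecessary, as you yourself observed; the clean computation on $T^{in}$ is exactly the paper's argument.
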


\begin{proof} On the one hand,
$\varphi_{\star} ([c_1])=\varphi_{\star}([m_1]+q_1 [l_1])=\varphi_{\star}([m_1])+q_1 \varphi_{\star} ([l_1])
=m_2 +q_1 (l_2+ ym_2)=(1+q_1 y)m_2+q_1 l_2$;
on the other hand, $\varphi_{\star} ([c_1]) =[c_2]=[m_2]+q_2 [l_2]$.
Therefore, $1+q_1 y=1$ and $q_1=q_2$. By Lemma \ref{l.sn}, $q_1=n^2 q_2$ and $|n|\geq 2$.
Then $q_1=q_2=0$.
Also notice that $p_1 =p_2 =1$ (Lemma \ref{l.sn}), the conclusions of the lemma follow.
\end{proof}

\begin{lemma}
\label{l.fixinterior}
If $F_1$ and $F_2$ are two $n$-punctured disk fibrations on $N$ with two fibers $\Sigma_1$ and $\Sigma_2$ correspondingly
so that $\partial \Sigma_1= \partial \Sigma_2$ is the union of $m_1$ and $n$ simple closed curves $m_2^1, \dots, m_2^n$
which are pairwise isotopic, then $\Sigma_1$ is isotopic to $\Sigma_2$ relative to $\partial \Sigma_1= \partial \Sigma_2$ in $N$.
\end{lemma}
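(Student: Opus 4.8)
The plan is to cut $N$ open along one of the two fibers and reduce the statement to the classification of incompressible surfaces in a product. First, some background: since $N$ is the mapping torus of the $n$-punctured disc $\Sigma$ and $\chi(\Sigma)=1-n\le -1$ (because $n\ge 2$), the manifold $N$ is irreducible and $\partial$-irreducible, its two boundary tori are incompressible (it is not a solid torus, by Lemma \ref{l.homologyN}), and every fiber of a fibration of $N$ over $S^1$ is incompressible and $\partial$-incompressible; in particular $\Sigma_1$ and $\Sigma_2$ are. I would also note, by duality, that $\partial\colon H_2(N,\partial N)\to H_1(\partial N)$ is injective (the image of $H_2(N)$ is zero, being carried by a torus parallel to $\partial N$), so $[\Sigma_1]=[\Sigma_2]$ in $H_2(N,\partial N)$ since the two surfaces have the same boundary.

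Next I would isotope $\Sigma_1$ and $\Sigma_2$, through isotopies of $N$ fixing $\partial N$ pointwise, into general position and then make $\#(\Sigma_1\cap\Sigma_2)$ minimal. The common boundary $\partial\Sigma_1=\partial\Sigma_2$ must stay in the intersection; apart from it, $\Sigma_1\cap\Sigma_2$ is a union of circles in the interiors together with properly embedded arcs having endpoints on $\partial N$. A standard innermost-disc argument (an innermost circle of intersection on $\Sigma_1$ bounds a disc there, hence, by incompressibility of $\Sigma_2$, also a disc on $\Sigma_2$; these cobound a ball by irreducibility of $N$) removes the circles, and a standard outermost-arc argument (using $\partial$-incompressibility of the $\Sigma_i$ and $\partial$-irreducibility of $N$) removes the arcs, all by isotopies supported away from $\partial N$. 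So I may assume $\Sigma_1\cap\Sigma_2=\partial\Sigma_1=\partial\Sigma_2$.

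Now cut $N$ along the fiber $\Sigma_2$: the result is a product $P\cong\Sigma\times[0,1]$ in which $\Sigma\times\{0\}$ and $\Sigma\times\{1\}$ are the two copies of $\Sigma_2$, the vertical boundary $\partial\Sigma\times[0,1]$ is $\partial N$ cut along $\partial\Sigma_2$, and the curves of $\partial\Sigma_2=\partial\Sigma_1$ reappear as the corner curves. Since $\mathrm{int}\,\Sigma_1$ is disjoint from $\Sigma_2$, the surface $\Sigma_1$ becomes a properly embedded, incompressible, $\partial$-incompressible surface $\Sigma_1'\subset P$ with $\partial\Sigma_1'$ lying on the corner curves. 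By Waldhausen's classification of incompressible, $\partial$-incompressible surfaces in a product, $\Sigma_1'$ is isotopic in $P$ to a horizontal surface $\Sigma\times\{\ast\}$ or to a vertical surface; the vertical case is impossible because $\chi(\Sigma_1')=1-n<0$ whereas vertical surfaces have non-negative Euler characteristic. Hence $\Sigma_1'$ is horizontal, so it is parallel to one of the faces $\Sigma\times\{\varepsilon\}$ with $\varepsilon\in\{0,1\}$, and that face is a copy of $\Sigma_2$. To obtain an isotopy rel the common boundary I would check that all components of $\partial\Sigma_1'$ lie at the same level $\varepsilon$ — using that $\Sigma_1$ is an honest fiber (a cross-section of the suspension flow of $F_1$), that this flow induces on $\partial N$ the same foliation as $F_2$ since $\partial\Sigma_1=\partial\Sigma_2$, and that all leaves are coherently oriented (built into Definition \ref{d.hirsch}), so that $\Sigma_1$ meets every component of $\partial\Sigma_2$ from the same side of $\Sigma_2$ — and conclude that the product region between $\Sigma_1'$ and $\Sigma\times\{\varepsilon\}$ gives a parallelism rel the corner curves; undoing the cut then yields the desired isotopy of $\Sigma_1$ onto $\Sigma_2$ in $N$ fixing $\partial\Sigma_1=\partial\Sigma_2$.

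The main obstacle is precisely this last step: making sure that the parallelism produced by cutting along $\Sigma_2$ is genuinely relative to the boundary, rather than merely an ambient isotopy of $N$ that slides $\partial\Sigma_1$ around inside $\partial N$. This is where the hypotheses that $\Sigma_1,\Sigma_2$ are fibers and that the foliation is coherently oriented must really be used; the incompressibility inputs, the general-position reductions, and the product classification are all routine.
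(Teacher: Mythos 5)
There is a genuine gap at the very first reduction. Your innermost-disc argument only removes circles of $\Sigma_1\cap\Sigma_2$ that are \emph{inessential} in one of the two surfaces: an ``innermost'' circle on $\Sigma_1$ is by definition one bounding a disc in $\Sigma_1$, so if every remaining intersection circle is essential in both $\Sigma_1$ and $\Sigma_2$ the argument produces nothing, and such circles cannot be ruled out a priori (two incompressible surfaces with the same boundary, e.g.\ two Seifert surfaces of a knot, can very well intersect in curves essential on both sides). Since $\Sigma_1$ and $\Sigma_2$ are planar, an essential intersection circle cuts off from $\Sigma_1$ a sub-punctured-disc containing some of the curves $m_2^{s_1},\dots,m_2^{s_k}$, and eliminating these circles is exactly where the real work lies. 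The paper's proof is devoted to this case: after the (correct) innermost-disc step showing all remaining circles are essential, it picks a \emph{nested} sub-punctured-disc $D_1^k\subset\Sigma_1$ bounded by such a circle together with some boundary curves, whose interior misses $\Sigma_2$, cuts $N$ along $\Sigma_1$ to get a product $\Sigma_1\times[0,1]$, finds a corresponding nested punctured disc $D_2^k$ there, swaps $D_1^k$ for $D_2^k$ and pushes off to strictly decrease the number of intersection circles, contradicting minimality. Without this exchange step (or a substitute, e.g.\ invoking uniqueness of the fiber in its relative homology class for fibered manifolds), your assumption ``$\Sigma_1\cap\Sigma_2=\partial\Sigma_1$'' is unjustified, and everything after it is built on that assumption.

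Two smaller remarks. First, since $\partial\Sigma_1=\partial\Sigma_2$ exactly and both surfaces are properly embedded, after a small perturbation fixing the boundary there are no arcs of intersection at all, so the outermost-arc/$\partial$-compression discussion is not needed (and, as phrased, an isotopy rel $\partial$ could not remove arcs ending on the fixed common boundary anyway). Second, your final step --- cutting along $\Sigma_2$, invoking the classification of incompressible, $\partial$-incompressible surfaces in $\Sigma\times[0,1]$ to see $\Sigma_1$ is horizontal, and then worrying about whether the resulting parallelism is rel the corner curves --- is a reasonable way to finish once disjointness of the interiors is established; the paper in fact leaves this concluding step implicit, relying on the product structure of $N$ cut along a fiber. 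So the tail of your plan is fine; the missing idea is the treatment of essential intersection circles.
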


\begin{proof}
Up to isotopy, we can assume that $\hbox{int}(\Sigma_1)\cap \hbox{int}(\Sigma_2)$ is the union of finitely many pairwise disjoint simple closed curves
$\alpha_1, \dots, \alpha_m$. Here $\hbox{int}(\Sigma_i)$ ($i=1,2$) is defined to be the interior of $\Sigma_i$.
 Moreover, we assume that $m\geq 1$  and $m$ is minimal   up to isotopy
relative to $\partial \Sigma_1= \partial \Sigma_2$.

Firstly, we will show that every $\alpha_i$ ($i\in \{1,\dots,m\}$) is essential in $\Sigma_2$. Otherwise, some $\alpha_i$ bounds a disk $D_2$ in
$\Sigma_2$. Notice that $\Sigma_1$ is incompressible in $N$, the union of $D_1$ and $D_2$, denoted by $S$, is a $2$-sphere embedded in $N$.
Since $N$ is an irreducible $3$-manifold, then $S$ bounds a $3$-ball in $N$. This means that we can do a surgery on $\Sigma_2$ in a small neighborhood of the $3$-ball to
obtain $\Sigma_2'$ so that $\Sigma_2'$ is isotopic to $\Sigma_2$ and the number of the connected components of
$\Sigma_2' \cap \Sigma_1$ is smaller than $m$. This contradicts the assumption.

Then there exists a nested $k$-punctured disk $D_1^k\subset \Sigma_1$ with boundary  $\alpha_j \cup (m_2^{s_1} \cup \dots \cup m_2^{s_k})$ for
some $j\in \{1, \dots, m\}$
where $\alpha_k$ is an essential simple closed curve in the interior of $\Sigma_1$. Here the fact that $D_1^k$ is a nested disk  means that the interior of $D_1^k$ is disjoint from $\Sigma_2$. We cut $N$ along $\Sigma_1$ to obtain a $3$-manifold $N_0$ which is homeomorphic to $\Sigma_1 \times [0,1]$.
Since $\partial \Sigma_1= \partial \Sigma_2$ and the fact that $D_1^k$ is a nested $k$-punctured disk, by a simple  argument on $N_0$, one can obtain that $\partial D_1^k$ also bounds a
nested $k$-punctured disk $D_2^k$ in $N_0$.
We define $(\Sigma_1- D_1^k) \cup D_2^k$ by $\Sigma_3$ which is an incompressible $k$-punctured  disk.
Since $N_0$ is homeomorphic to $\Sigma_1 \times [0,1]$, then $\Sigma_3$ is isotopic to $\Sigma_1$ relative to
$\overline{\Sigma_1 -D_1^k}$ in $N_0$. We can push $\Sigma_3$ a little into the interior of $N_0$ to $\Sigma_3'$ so that
the intersection number of $\Sigma_3'$ and $\Sigma_2$ is strictly smaller than the intersection number of
$\Sigma_1$ and $\Sigma_2$. This contradicts the minimality.
\end{proof}

Now we deal with the trouble that maybe there are  many incompressible tori in a Hirsch manifold. For this purpose,
we should read more topological information about $N$.

First, we recall some classical facts about the geometry and topology of surface bundles.
Nielsen-Thurston theorem (see, for instance, Fathi-Laudenbach-Poenaru \cite{FLP}) states that a homeomorphism $f$ on a compact surface $\Sigma$
is isotopic to one of the three types according to their dynamics: periodic, reducible
and pseudo-Anosov. Thurston geometrization theorem of surface bundles (see Thurston \cite{Th}) implies that  Nielsen-Thurston theorem
 deeply involves the geometric
structure of three dimensional manifolds as follows: the mapping torus $M_f = \Sigma \times I / (s,1)\sim (f(s),0)$ is an irreducible $3$-manifold, moreover,
\begin{enumerate}
  \item $M_f$ is hyperbolic if and only if $f$ is pseudo-Anosov;
  \item $M_f$ is Seifert-fibered if and only if $f$ is periodic;
  \item $M_f$ contains an essential torus
(hence we can perform JSJ decomposition) if and only if $f$ is reducible.
\end{enumerate}
In particular, in the third case, there exists a collection of essential simple closed curves in $\Sigma$
so that the suspension of these curves can be glued up by a map isotopic to $f$ to give a collection of
essential tori and Klein bottles which are the collection of JSJ tori and Klein bottles.
Now we formalize some facts about  the geometry and topology of surface bundles as follows which will be very useful.

\begin{lemma}\label{l.geotopsurfacebundle}
Let $M_f = \Sigma \times I / (s,1)\sim (f(s),0)$ be a mapping torus  where $\Sigma$ is a compact orientable surface and $f$ is an orientation-preserving homeomorphism on $\Sigma$.
Then,
\begin{enumerate}
  \item $M_f$ is an irreducible $3$-manifold and every JSJ piece of $M_f$ is either hyperbolic or Seifert;
  \item every JSJ torus $T$ of $M_f$ is corresponding to an essential simple closed curve $c$ in $\Sigma$ which is periodic up to
isotopy under $f$.
\end{enumerate}
\end{lemma}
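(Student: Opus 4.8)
Lemma \ref{l.geotopsurfacebundle} — proof proposal.

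The plan is to invoke the Nielsen–Thurston trichotomy and Thurston's geometrization of surface bundles, which are exactly the classical facts recalled in the paragraph preceding the statement; the lemma is essentially a repackaging of those facts, so the work is organizational rather than deep. First I would dispose of irreducibility: by Thurston's geometrization of surface bundles (see Thurston \cite{Th}), the mapping torus $M_f$ of any homeomorphism $f$ of a compact surface $\Sigma$ is irreducible, so that is immediate. Then I would split into the three Nielsen–Thurston types for the isotopy class of $f$. If $f$ is pseudo-Anosov, $M_f$ is hyperbolic and there is nothing further to prove (it is a single JSJ piece, and it is of hyperbolic type, matching clause (1); clause (2) is vacuous since there are no JSJ tori). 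If $f$ is periodic, $M_f$ is Seifert-fibered, hence again a single JSJ piece, this time Seifert, and clause (2) is again vacuous.

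The remaining, and only substantive, case is when $f$ is reducible. Here I would use the canonical reducing system: there is an $f$-invariant (up to isotopy) collection $\mathcal{C}$ of disjoint essential simple closed curves in $\Sigma$, unique up to isotopy, such that $f$ permutes the components of $\mathcal{C}$ and acts on each complementary piece as either a periodic or a pseudo-Anosov map (the Thurston canonical form). Suspending $\mathcal{C}$ under a representative isotopic to $f$ produces a family of embedded tori (and potentially Klein bottles, but since $\Sigma$ is orientable and $f$ is orientation-preserving, the suspension of each $f$-orbit of curves is a torus) cutting $M_f$ into the mapping tori of the first-return maps on the complementary pieces. Each such piece is the mapping torus of a periodic or pseudo-Anosov homeomorphism, hence Seifert-fibered or hyperbolic respectively by the trichotomy applied piecewise. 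One then has to check that this torus decomposition is minimal, i.e. is the JSJ decomposition: that the suspension tori are essential (incompressible and non-boundary-parallel in $M_f$), which follows because the curves in $\mathcal{C}$ are essential in $\Sigma$ and pairwise non-isotopic, and that no further splitting is forced, which is exactly the content of the canonical form (each complementary piece is geometric, either atoroidal-hyperbolic or small Seifert). This establishes clause (1) in full, and clause (2) follows because every JSJ torus arises as the suspension of a component of $\mathcal{C}$, and each such curve is essential in $\Sigma$ and is carried to an isotopic curve by an iterate of $f$ (since $f$ permutes the finitely many components of $\mathcal{C}$), hence is periodic up to isotopy under $f$.

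The main obstacle, such as it is, is bookkeeping in the reducible case: being careful that the suspension of an $f$-orbit of curves is a single torus (using orientability of $\Sigma$ and orientation-preservation of $f$ to rule out Klein bottles and to see the orbit closes up into one component), and invoking correctly that the Thurston canonical decomposition of $f$ matches the JSJ decomposition of $M_f$ rather than merely giving some essential torus decomposition refining it. The cleanest way to handle the latter is to cite the standard correspondence (going back to the proof of geometrization for Haken manifolds fibering over the circle, see Thurston \cite{Th}) between the Nielsen–Thurston canonical reducing system of $f$ and the JSJ system of $M_f$; with that in hand, both clauses are formal consequences. No new estimates or constructions are needed beyond assembling these ingredients.
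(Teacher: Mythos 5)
Your proposal follows exactly the route the paper has in mind: the paper gives no proof of this lemma at all, treating it as a formalization of the classical facts it recalls in the preceding paragraph (Nielsen--Thurston trichotomy plus Thurston's geometrization of surface bundles, with the reducible case handled by suspending the canonical reducing system), and your write-up is that argument spelled out. One sub-claim in your bookkeeping is, however, false: orientability of $\Sigma$ together with orientation-preservation of $f$ does \emph{not} rule out Klein bottles among the suspensions. An orientation-preserving homeomorphism can reverse the orientation of an invariant reducing curve $c$ (think of a rotation by $\pi$ of the surface about an axis meeting $c$ in two points, composed with suitable maps on the complementary pieces), in which case the suspension of the orbit of $c$ is an embedded Klein bottle in the orientable manifold $M_f$ --- and indeed the paper's preceding paragraph explicitly says the suspensions form ``a collection of essential tori \emph{and Klein bottles}''. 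This does not damage the lemma: when a Klein bottle occurs, the corresponding JSJ torus is the boundary of its regular neighborhood, i.e.\ the suspension of the $f$-orbit of a parallel copy of $c$ (the two boundary curves of an annular neighborhood of $c$, permuted by the return map), which is still an essential simple closed curve periodic up to isotopy under $f$, so item (2) survives; but the statement ``the suspension of each $f$-orbit of curves is a torus'' as you justify it is wrong and should be replaced by this neighborhood-boundary argument.
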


Now, we come back to read some topological information about $N$.
\begin{lemma}\label{l.topologyN}
$N$ is an irreducible $3$-manifold so that,
\begin{enumerate}
 \item every JSJ piece of $N$ is either hyperbolic or Seifert;
  \item the JSJ diagram of $N$ is a  path;

  \item every Seifert piece is homeomorphic to $S(0,2;\frac{q}{p})$ where $p$ and $q$ ($0<q<p$) are coprime
  and $S(0,2;\frac{q}{p})$ represents the Seifert manifold whose base orbifold is a $2$-punctured  sphere with
  a $\frac{q}{p}$-singularity.
\end{enumerate}
\end{lemma}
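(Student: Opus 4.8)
The plan is to realize $N$ as a mapping torus and then apply Lemma \ref{l.geotopsurfacebundle}. Recall from Section \ref{s.prel} that $N$ is, by construction, the mapping torus of $(\Sigma, b)$, where $\Sigma$ is an $n$-punctured disk and $b$ is an $n$-braid whose closure is a knot, viewed as an orientation-preserving homeomorphism of $\Sigma$. Since $\Sigma$ is a compact orientable surface and $b$ preserves orientation, Lemma \ref{l.geotopsurfacebundle} immediately gives that $N$ is irreducible and that every JSJ piece of $N$ is hyperbolic or Seifert; this proves item (1).

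For item (2), I would use the second clause of Lemma \ref{l.geotopsurfacebundle}: each JSJ torus of $N$ comes from an essential simple closed curve $c$ in $\Sigma$ that is periodic up to isotopy under $b$. Since $\Sigma$ is a planar surface (an $n$-punctured disk), an essential simple closed curve $c$ separates the $n$ punctures into two nonempty groups together with the outer boundary. The key observation is that because the closure $\widetilde b$ of $b$ is a \emph{knot}, $b$ permutes the $n$ punctures as a single $n$-cycle; consequently, if $c$ is $b$-periodic up to isotopy, the periodic family of its images corresponds to a partition of the puncture set that is invariant under this $n$-cycle, which forces the partition to consist of curves separating "the first $k$ punctures visited" for a divisor-type pattern. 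The upshot I want is that such periodic essential curves form a \emph{linearly ordered} family (ordered by which punctures each encircles), so the associated JSJ tori are linearly ordered and the JSJ graph is a path rather than a more complicated tree or a graph with cycles. I would make this precise by noting that two disjoint essential curves $c, c'$ in the planar surface $\Sigma$ are nested or disjoint-and-unnested, and $b$-periodicity plus the $n$-cycle action rules out the unnested configuration for curves surrounding a "partial orbit"; hence the curves are nested, giving a path.

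For item (3), each Seifert JSJ piece is cut out by two adjacent periodic curves $c, c'$ (or one such curve and a boundary component of $N$); the piece is then the suspension of the subsurface of $\Sigma$ between them under the appropriate power of $b$ that fixes this subsurface setwise. By the periodicity, that power of $b$ restricted to the subsurface is isotopic to a \emph{periodic} (finite-order) homeomorphism, so by Lemma \ref{l.geotopsurfacebundle}(1) (the Seifert case of Nielsen--Thurston / Thurston) the piece is Seifert-fibered. The subsurface between two nested essential curves in a planar surface is a $2$-punctured sphere (an annulus with the inner curves collapsed appropriately — more precisely, the region between $c$ and $c'$ with all the intermediate puncture-groups collapsed is an annulus, but the relevant JSJ piece's base orbifold is a $2$-punctured sphere with one cone point coming from the finite-order rotation). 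A finite-order orientation-preserving homeomorphism of such a region is, up to isotopy, a rotation, and its mapping torus is exactly a Seifert manifold over the $2$-orbifold disk-with-cone-point glued to another boundary, i.e. $S(0,2;\frac{q}{p})$ for some coprime $0<q<p$ where $\frac{q}{p}$ records the rotation angle. I would identify $p$ with the order of the rotation and $q$ with the rotation number, noting $q\neq 0$ since a genuinely reducible (non-periodic globally) $b$ produces a nontrivial cone point, and $q<p$ after reducing.

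The main obstacle I expect is item (2): carefully proving that the $b$-periodic essential curves in the planar surface $\Sigma$ are \emph{nested} (hence linearly ordered), so that the JSJ graph is a path. This is where the hypothesis that $\widetilde b$ is a knot (equivalently, $b$ acts as an $n$-cycle on punctures) does real work, and I would want to argue it via the combinatorics of $b$-invariant multicurves in a planar surface with a transitively-permuted set of punctures — ruling out, for instance, two "parallel" curves that each enclose a proper sub-orbit but neither contains the other.
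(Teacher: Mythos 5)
Your overall skeleton (realize $N$ as the mapping torus of $(\Sigma,b)$ and invoke Lemma \ref{l.geotopsurfacebundle}) is the same as the paper's, and item (1) is fine. But the step you yourself flag as the main obstacle in item (2) rests on a false intermediate claim. It is not true that the $b$-periodic essential curves are pairwise nested: if $c$ encloses a proper subset $P$ of the punctures and is periodic up to isotopy, then, precisely because $b$ permutes the punctures in a single $n$-cycle, the images $c, b(c), b^2(c),\dots$ enclose pairwise \emph{disjoint} translates of $P$ whose union is all the punctures; these curves are disjoint and unnested, so ``ruling out the unnested configuration'' cannot succeed. What is true, and what actually yields the path, is a statement at the level of $b$-orbits: all curves in one orbit suspend to a \emph{single} JSJ torus; the puncture-sets enclosed by one orbit form a partition of all $n$ punctures into equal-size blocks (by transitivity); and two such partitions coming from distinct orbits must be comparable under refinement, since two disjoint curves in a planar surface enclose nested or disjoint puncture sets. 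This linearly orders the orbits of reduction curves, and, together with the fact that the return map permutes the inner boundary circles of each complementary region cyclically (transitivity again), it gives a path with $T^{out}$ at one end and $T^{in}$ at the other. Note that the paper itself takes a shorter route: every simple closed curve in the planar fiber separates, so every JSJ torus of $N$ separates and the JSJ graph is a tree, and the presence of exactly two boundary tori then forces the tree to be a path.

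Item (3) has two further problems. First, your inference ``by the periodicity, that power of $b$ restricted to the subsurface is isotopic to a periodic (finite-order) homeomorphism'' is backwards and false as stated: periodicity of the cutting curves says nothing about the return map on a complementary piece, which may be pseudo-Anosov (those are exactly the hyperbolic pieces). What is needed, and what the paper uses, is the converse direction of Thurston's theorem: \emph{if} a piece is Seifert, then its monodromy is periodic up to isotopy, hence by Ker\'ekj\'art\'o/Constantin--Kolev conjugate to a rotation. Second, to conclude that the piece is $S(0,2;\frac{q}{p})$ you must know it has exactly two boundary tori, i.e.\ that the periodic return map permutes the inner boundary circles of its fiber subsurface in a single cycle; this again uses the $n$-cycle hypothesis (the paper encodes it by asserting each piece is a solid torus minus a closed braid), and your sketch does not address it.
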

\begin{proof}
Recall that $N$ can be defined to be the mapping torus of $(\Sigma, b)$
 where $\Sigma$ is an $n$-punctured compact
disk and $b$ is a homeomorphism on $\Sigma$.
Then by item $1$ of Lemma \ref{l.geotopsurfacebundle}, $N$ is an irreducible $3$-manifold so that every JSJ piece is either
hyperbolic or Seifert.

By item $2$ of Lemma \ref{l.geotopsurfacebundle}, every JSJ torus $T$ of $N$ is corresponding to an essential simple closed curve $c$ in $\Sigma$ which is periodic up to
isotopy under $b$.
On the one side, notice that every simple closed curve in $\Sigma$ is separating, then every JSJ torus of $N$
is separating. This implies that the JSJ diagram of $N$ is a tree.
On the other side, $\partial N$ is the union of two tori, $T^{out}$ and $T^{in}$.
Combing the two sides above, one could immediately obtain that the JSJ diagram of $N$ is a path.

Let $N_0$ be a Seifert piece of $N$, then $N_0$ is homeomorphic to a solid torus minus a small
open tubular neighborhood of a closed braid $\widetilde{b_0}$.
Since $N_0$ is Seifert, $b_0$ should be a periodic braid.
Since every periodic homeomorphism on a disk is conjugate to a rotation (see Constantin-Kolev \cite{CK}), up to conjugacy, $b_0$ should be a twisted braid.
This implies that $N_0$ is homeomorphic to a Seifert manifold $S(0,2; \frac{q}{p})$.

\end{proof}

Recall that  $M =N\setminus x\sim \varphi(x), x\in T^{out} N$. By Lemma \ref{l.topologyN}, the gluing map $\varphi$ glues the
two JSJ pieces corresponding to the two ends of the JSJ diagram of $N$ (notice that the two JSJ
pieces maybe are the same) and the two JSJ pieces should belong to one of the following three cases:
\begin{enumerate}
  \item both of them are hyperbolic;
  \item one of them is hyperbolic and the other one is Seifert;
  \item both of them are Seifert.
\end{enumerate}
In the first two cases, it is obvious that the glued torus $T$ is a JSJ torus in $M$.
In the third case, since $\varphi(m_1)=m_2$, one can easily check that up to isotopy, $\varphi:T^{out}\to
T^{in}$ doesn't  map a regular fiber on $T^{out}$ to a
regular fiber on $T^{in}$ (up to isotopy) induced by the associated Seifert  pieces. Therefore, $T$ is also a JSJ torus in $M$.
Now naturally we have the following corollary.

\begin{corollary}\label{c.incomp}
Let $M$ be a closed orientable $3$-manifold which admits a Hirsch foliation.
Then every incompressible torus $T$ embedded in $M$ is a JSJ torus and
the JSJ diagram of $M$ is cyclic.
\end{corollary}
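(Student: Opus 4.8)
The plan is to combine the structural information already extracted about $N$ in Lemma \ref{l.topologyN} with the discussion immediately preceding the corollary, which analyzed how the gluing map $\varphi$ interacts with the two extremal JSJ pieces of $N$. First I would recall that, by Lemma \ref{l.topologyN}, $N$ is irreducible with JSJ diagram a path whose pieces are hyperbolic or of type $S(0,2;\frac{q}{p})$, and that $M$ is obtained from $N$ by gluing $T^{out}$ to $T^{in}$ via $\varphi$. The preceding paragraph already establishes that the glued torus $T$ is itself a JSJ torus of $M$: in the two cases where at least one extremal piece of $N$ is hyperbolic this is immediate, and in the case where both extremal pieces are Seifert it follows because $\varphi(m_1)=m_2$ forces $\varphi$ not to match regular fibers up to isotopy (a Seifert piece of type $S(0,2;\frac{q}{p})$ has a unique fibration up to isotopy, and $m_1$, $m_2$ are not fiber slopes). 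Hence the JSJ collection of $M$ is precisely the image of the JSJ collection of $N$ together with $T$, so the JSJ graph of $M$ is obtained from a path by identifying its two endpoints — that is, it is a cycle (possibly a single vertex with a loop, when $N$ has only one JSJ piece). This proves the second assertion.

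For the first assertion I would invoke the standard fact that in an irreducible $3$-manifold every incompressible (two-sided) torus is isotopic either into a JSJ piece or onto a JSJ torus; since each JSJ piece of $M$ is either hyperbolic (hence atoroidal, so an incompressible torus there is boundary-parallel, i.e. isotopic to a JSJ torus) or a small Seifert piece $S(0,2;\frac{q}{p})$ (whose only incompressible tori up to isotopy are the boundary components, again JSJ tori), every incompressible torus in $M$ is isotopic to a JSJ torus. One must also note $M$ is irreducible: it is a closed orientable $3$-manifold containing the incompressible torus $T$, and it is glued from the irreducible manifold $N$ along an incompressible boundary torus, so $M$ is irreducible by the standard gluing argument (a reducing sphere could be isotoped off $T$ and would live in $N$).

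The main obstacle is the Seifert--Seifert case of the gluing and the classification of incompressible tori inside the small Seifert pieces: one has to be careful that $S(0,2;\frac{q}{p})$ genuinely has no "hidden" incompressible tori beyond its boundary and that its Seifert fibration is unique up to isotopy, so that the fiber-slope argument showing $T$ is a JSJ torus is valid; both are classical (e.g. from Jaco or Hatcher's notes on $3$-manifolds) but deserve an explicit citation. Everything else is bookkeeping with the JSJ decomposition already set up in Lemma \ref{l.topologyN}.
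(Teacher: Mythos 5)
Your proposal is correct and follows essentially the same route as the paper: there the corollary is presented as an immediate consequence of Lemma \ref{l.topologyN} together with the three-case analysis just before it showing the glued torus $T$ is a JSJ torus, so the JSJ graph of $M$ is the path of $N$ with its two ends identified, and the pieces (hyperbolic or cable spaces $S(0,2;\frac{q}{p})$) contain no essential tori other than boundary-parallel ones. The details you add (irreducibility of $M$ and the classification of incompressible tori inside the pieces) are exactly the standard facts the paper leaves implicit; the only caveat is that your parenthetical gloss ``$m_1$, $m_2$ are not fiber slopes'' is not by itself the full fiber-slope comparison in the Seifert--Seifert case, but that verification is the one carried out (tersely) in the paper's preceding paragraph, which you correctly invoke.
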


\begin{lemma}
\label{l.transT}
Let $M$ be a closed $3$-manifold which admits an affine Hirsch foliation. We have the following conclusions.
\begin{enumerate}
  \item $M$ is the union of $n$ JSJ pieces $M_1, M_2, \dots, M_n$ by the gluing maps
  $\varphi_1: \partial^{out}M_1 \to \partial^{in} M_2, \dots, \varphi_{n-1}:  \partial^{out}M_{n-1} \to \partial^{in} M_n $
  and $\varphi_n: \partial^{out}M_n \to \partial^{in} M_1$. Here the
  union of $T_i^{out}$ and  $T_i^{in}$ is the boundary of $M_i$ ($i\in\{1,\dots,n\}$).
  \item Let $\{T_1, \dots, T_n\}$ be a union of the maximal pairwise disjoint, pairwise non-parallel JSJ tori of $M$ and
  $\cH$ be a Hirsch foliation on $M$. Then, $\cH$ can be isotopically leaf-conjugate to $\cH'$ so that every $T_i$
  ($i\in\{1,\dots,n\}$) is transverse to $\cH'$.
\end{enumerate}
\end{lemma}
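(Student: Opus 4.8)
The plan is to treat Item~1 as a repackaging of the structure theory already obtained for $N$, and to reduce Item~2 to producing a single maximal system of JSJ tori transverse to $\cH$. For Item~1: by Lemma~\ref{l.topologyN} the JSJ graph of $N$ is a path whose pieces are each hyperbolic or of the form $S(0,2;\frac{q}{p})$, and, as recalled just before Corollary~\ref{c.incomp}, the two boundary tori $T^{out},T^{in}$ of $N$ lie in the two end pieces of that path, while the glued torus $T=\varphi(T^{out})=T^{in}$ is itself a JSJ torus of $M$ --- in the Seifert--Seifert case this is precisely where $\varphi(m_1)=m_2$ is used, since it prevents $\varphi$ from matching up regular fibres. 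Writing the path decomposition of $N$ as $N_1-N_2-\dots-N_n$ with $T^{in}\subset\partial N_1$ and $T^{out}\subset\partial N_n$, the manifold $M=N/\varphi$ thus has JSJ pieces $M_i:=N_i$ arranged cyclically as $M_1-M_2-\dots-M_n-M_1$, with $\varphi_1,\dots,\varphi_{n-1}$ the interior JSJ gluings of $N$ and $\varphi_n=\varphi$; when $N$ has a single JSJ piece this degenerates to $n=1$ with a single self-gluing. This is the statement of Item~1, refining Corollary~\ref{c.incomp}.

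For Item~2 it is enough to produce one maximal collection $\mathcal{T}^0=\{T_1^0,\dots,T_n^0\}$ of pairwise disjoint, pairwise non-parallel JSJ tori of $M$, each transverse to $\cH$. Indeed, by Corollary~\ref{c.incomp} every incompressible torus of $M$ is isotopic to a JSJ torus, so there are finitely many isotopy classes of such tori, and by uniqueness of the JSJ decomposition any two maximal disjoint non-parallel systems of them differ by an ambient isotopy $\Phi$ of $M$ (which is then isotopic to the identity). Choosing such a $\Phi$ with $\Phi(\{T_i\})=\mathcal{T}^0$ and setting $\cH'=\Phi^{-1}(\cH)$, we obtain a foliation isotopically leaf-conjugate to $\cH$ to which every $T_i$ is transverse.

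To build $\mathcal{T}^0$: let $T_\cH$ be the embedded torus of Definition~\ref{d.hirsch}, so that the path-closure of $M\setminus T_\cH$ is $N$; by Item~1, $T_\cH$ is a JSJ torus of $M$, and since $\cH|_N$ is an $n$-punctured disk fibration all of whose fibres are transverse to $\partial N$, the torus $T_\cH$ is transverse to $\cH$. The remaining JSJ tori of $M$ are, up to isotopy, the JSJ tori of $N$, which by Lemma~\ref{l.geotopsurfacebundle} correspond to essential simple closed curves of $\Sigma$ that are periodic under $b$. Replacing $b$ by an isotopic homeomorphism, we may assume $b$ genuinely permutes a system of pairwise disjoint such curves, so that the JSJ tori of $N$ are realized as suspension tori $S_1,\dots,S_{n-1}$ in the interior of $N$; the suspension of the $b$-orbit of a curve $c\subset\Sigma$ meets each punctured-disk fibre $\Sigma\times\{t\}$ in the circle $c\times\{t\}$, and its tangent plane contains a vector transverse to the fibration, so together with the tangent plane of the fibre it spans the tangent space of $N$, whence $S_j$ is transverse to $\cH|_N$. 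The $S_j$ lie in the interior of $N$ and are disjoint from $T_\cH$, pairwise disjoint, pairwise non-parallel, and no $S_j$ is parallel to $T_\cH$ (these represent distinct edges of the JSJ cycle of $M$). Hence $\mathcal{T}^0:=\{T_\cH,S_1,\dots,S_{n-1}\}$ is a maximal disjoint non-parallel system of JSJ tori all transverse to $\cH$, as required.

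The one genuinely geometric step is the last: promoting ``periodic up to isotopy under $b$'' to ``genuinely permuted by $b$'', so that all the JSJ tori of $N$ appear simultaneously as disjoint suspension tori transverse to the punctured-disk fibration (the transversality itself being the elementary computation above). The remaining ingredient --- uniqueness up to ambient isotopy of maximal disjoint non-parallel systems of JSJ tori of $M$ --- is standard and is invoked through Corollary~\ref{c.incomp}; everything else is bookkeeping on top of Lemma~\ref{l.topologyN}.
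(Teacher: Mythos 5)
Your proof is correct, and it uses the same underlying ingredients as the paper (Lemma~\ref{l.topologyN}, Corollary~\ref{c.incomp}, and the Nielsen--Thurston description of JSJ tori of a surface bundle in Lemma~\ref{l.geotopsurfacebundle}), but it organizes Item~2 differently. The paper works directly with the given tori: assuming $\cH$ transverse to $T_n$, it isotopes $T_1,\dots,T_{n-1}$ \emph{inside} $N$ relative to $\partial N$ into position transverse to $F=\cH|_N$ (equivalently, perturbs $F$ rel $\partial N$), and the perturbed fibration induces the desired $\cH'$; no global uniqueness statement about JSJ systems is invoked beyond the normalization $T_i=\partial^{out}M_i$. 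You instead build a preferred transverse system $\mathcal{T}^0$ --- making explicit the upgrade from ``periodic up to isotopy'' to ``genuinely permuted'' by choosing a good monodromy representative, which is legitimate because mapping tori of isotopic monodromies are homeomorphic by a fibre-preserving homeomorphism, and which actually fills in a step the paper only asserts --- and then transfer to the given $T_i$ via uniqueness of the maximal JSJ system up to ambient isotopy, pulling back $\cH$ by the time-one map $\Phi$. The trade-off: your route leans on the (standard, but extra) fact that any maximal disjoint non-parallel family of JSJ tori is ambient-isotopic to the canonical one, while the paper's rel-boundary inductive isotopy keeps everything local to $N$; your route in exchange gives a cleaner explicit transverse model. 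Two cosmetic slips worth fixing: the suspension torus of the $b$-orbit of $c$ meets a fibre $\Sigma\times\{t\}$ in the whole orbit of circles $c\times\{t\}\cup\dots\cup b^{k-1}(c)\times\{t\}$, not just $c\times\{t\}$ (transversality is unaffected); and you should note that no JSJ piece is $T^2\times I$ (they are hyperbolic or $S(0,2;\frac{q}{p})$ by Lemma~\ref{l.topologyN}), which is what guarantees your $S_j$ and $T_{\cH}$ are pairwise non-parallel in $M$.
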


\begin{proof}
Item $1$ of the lemma is a direct consequence of Corollary \ref{c.incomp}.
We only need to prove item $2$ of the theorem.

Without loss of generality, we can suppose that $T_i =\partial^{out} M_i$ ($i\in \{1,\dots,n\}$)
and  $\cH$ is transverse to $T_n$.
Let $N$ be the union of $M_1, M_2, \dots, M_n$ by the gluing maps $\varphi_1, \dots, \varphi_{n-1}$.
$\cH$ restricted to $N$ is an $m$-punctured disk fibration, denoted by $F$. Since $N$ admits an $m$-punctured
disk fibration  $F$, by Corollary \ref{c.incomp},
every incompressible torus $T$ in the interior of $N$ is a JSJ torus. Moreover, by item $2$ of Lemma \ref{l.geotopsurfacebundle},
 $T$ can be isotopic to $T'$ relative to $\partial N$
so that $T'$ is transverse to $F$.
Then by an easy inductive argument, $T_1, \dots, T_{n-1}$ in $N$ can be isotopic to $T_1', T_2', \dots, T_{n-1}'$ relative to
$\partial N$ respectively so that every $T_i'$ is transverse to $F$.
Equivalently, we can pertubate  $F$ in $N$ relative to $\partial N$ to $F'$ which is transverse to every $T_i$.
Then $F'$ naturally induces a foliation $\cH'$ in $M$ so that,
\begin{itemize}
  \item $\cH'$ is isotopically leaf-conjugate to $\cH$;
  \item $\cH'$ is transverse to every $T_i$.
\end{itemize}
\end{proof}

\begin{lemma}
\label{l.stdnber}
Let $M$ be a closed $3$-manifold which admits an affine Hirsch foliation $\cH$.
Let $T_1$ and $T_2$ be two incompressible tori in $M$ so that each of them is transverse to $\cH$.
We denote the path closure of $M-T_i$ by $N_i$ ($i=1,2$)
and denote the restriction of $\cH$ on $N_i$ by $\cF_i$ which is an $n_i$-punctured disk fibration on $N_i$.
Then $n_1= n_2$.
\end{lemma}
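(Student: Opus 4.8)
The plan is to recover the integer $n_i$ from the torsion subgroup of $H_1(M;\ZZ)$, which manifestly does not depend on the choice of transverse torus, and then to read off $n_1=n_2$.

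First I would record how $H_1(M)$ is built from $N_i$. Since $\cF_i$ is a fibration of the connected manifold $N_i$, the torus $T_i$ is non-separating in $M$ (this also follows from Corollary~\ref{c.incomp}, as every JSJ torus of $M$ is non-separating), so $M$ is obtained from $N_i$ by gluing its two boundary tori $T_i^{out}$ and $T_i^{in}$ to each other along $\varphi_i$; at the level of fundamental groups this is an HNN extension of $\pi_1(N_i)$ over $\pi_1(T^2)$. Consequently
\[
H_1(M)\;\cong\;\Bigl(H_1(N_i)\oplus\ZZ\Bigr)\Big/\Bigl\langle\, i_{1,\star}(\xi)-i_{2,\star}(\varphi_i)_\star(\xi)\ :\ \xi\in H_1(T_i^{out})\,\Bigr\rangle ,
\]
where the free $\ZZ$ summand is generated by a loop meeting $T_i$ once.

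Next I would substitute the data of Lemma~\ref{l.homologyN}: $H_1(N_i)$ is free on $[l_1]$ and $[m_2]$, with $[m_1]=n_i[m_2]$ and $[l_2]=n_i[l_1]$. The gluing map $\varphi_i$ carries the circle fibration that $\cF_i$ induces on $T_i^{out}$ to the one it induces on $T_i^{in}$, and because every leaf of $\cH$ is orientable it preserves the orientations that $\Sigma$ induces on these fibre circles; hence $(\varphi_i)_\star[m_1]=[m_2]$, and then (with the orientation conventions fixed at the start of this section, as in Lemma~\ref{l.fixonT}) $(\varphi_i)_\star[l_1]=[l_2]+y_i[m_2]$ for some $y_i\in\ZZ$. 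The two defining relations above become
\[
(n_i-1)[m_2]=0 ,\qquad (n_i-1)[l_1]+y_i[m_2]=0 ,
\]
and the extra generator is unconstrained, so $H_1(M)\cong\ZZ\oplus\bigl(\ZZ^2/A_i\bigr)$ with $A_i=\left(\begin{smallmatrix} 0 & n_i-1\\ n_i-1 & y_i\end{smallmatrix}\right)$. Since $\cH$ is an \emph{affine} Hirsch foliation we have $n_i\ge 2$, hence $\det A_i=-(n_i-1)^2\neq 0$ and the torsion subgroup of $H_1(M)$ has order exactly $(n_i-1)^2$ (independently of $y_i$).

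Finally, $\lvert\,\mathrm{Tors}\,H_1(M)\,\rvert$ depends only on $M$, so $(n_1-1)^2=(n_2-1)^2$, and as $n_1,n_2\ge 2$ this gives $n_1=n_2$. I expect the only delicate point to be the orientation bookkeeping in the third paragraph, namely checking that $(\varphi_i)_\star[m_1]=+[m_2]$ and that the coefficient of $[l_2]$ in $(\varphi_i)_\star[l_1]$ is $+1$ rather than $-1$. A different sign convention would merely replace the count $(n_i-1)^2$ by $n_i^2-1$; since either expression is strictly increasing in $n_i$ on $\{2,3,\dots\}$ and is evaluated with the \emph{same} convention for $T_1$ and for $T_2$, the conclusion $n_1=n_2$ is unaffected in any case.
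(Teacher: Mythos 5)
Your proof is correct, but it takes a genuinely different route from the paper's. The paper argues geometrically: after arranging $T_1$ and $T_2$ to be disjoint and non-parallel, it cuts $M$ along both tori into two pieces $W_1,W_2$ carrying $m_1$- and $m_2$-punctured disk fibrations, and observes that each fiber of $\cF_1$ is one fiber of the fibration on $W_1$ together with $m_1$ fibers of the fibration on $W_2$, hence an $m_1m_2$-punctured disk; the symmetric count gives $n_1=m_1m_2=m_2m_1=n_2$. That argument is a pure counting of pieces: no homology, no orientation bookkeeping, and it does not use the affine hypothesis at all. Your argument instead encodes $n_i$ in an invariant of $M$ alone, the order of $\mathrm{Tors}\,H_1(M)$, via the HNN-type Mayer--Vietoris presentation together with Lemma \ref{l.homologyN} applied to $N_i$ (legitimate, since $\partial N_i$ consists of exactly two tori, so the monodromy braid of $\cF_i$ closes up to a knot, which is the setting of that lemma). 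What this buys is an explicit numerical invariant of $M$ detecting the strand number and no need to isotope the two tori off each other; what it costs is the sign bookkeeping, which, as you say, is harmless because the same orientation conventions are applied to both tori and either possible count, $(n_i-1)^2$ or $n_i^2-1$, is strictly increasing in $n_i$.

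One step deserves tightening: ``$n_i\ge 2$ because $\cH$ is affine'' is not immediate, since Definition \ref{d.affinehirsch} asks for degree at least $2$ only relative to one transverse torus, and independence of the choice of torus is only established later (Lemma \ref{l.affT}); if some $n_i$ were $1$, your determinant would vanish. This is easy to repair inside your own computation: the presentation shows that $H_1(M)$ has rank $1$ when $n_i\ge 2$ and rank at least $2$ when $n_i=1$, so either $n_1=n_2=1$ (and there is nothing to prove) or both exceed $1$ and the torsion count applies.
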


\begin{proof}
Without loss of generality, we can suppose that $T_1$ and $T_2$ are disjoint and non-parallel.
The path closure of $M-T_1\cup T_2$ is the union of two compact $3$-manifolds $W_1$ and $W_2$.
Actually, $N_1=W_1\cup_{T_2} W_2$ and $N_2 =W_2\cup_{T_1}W_1$. we denote $\cH$ restricted to $W_i$ ($i=1,2$)
by $H_i$ which is an $m_i$-punctured disk fibration on $W_i$.
Notice that every fiber of $\cF_1$ is the union of one fiber of $H_1$ and $m_1$ fibers of $H_2$.
Therefore, every fiber of $\cF_1$ is an $m_1\cdot m_2$-punctured disk. Equivalently, $n_1 = m_1\cdot m_2 $.
Similarly, $n_2 =m_2 \cdot m_1$.
In summary, $n_1 =n_2$.
\end{proof}

\begin{definition}
\label{d.strdnumberF}
Let $M$ be a closed $3$-manifold which admits an affine Hirsch foliation $\cF$ and
$T$ be an incompressible torus which is transverse to $\cF$. We denote $N$ by
the path closure of $M-T$ and denote $\cF$ restricted on  $N$ by $F$ which is   an $n$-punctured disk fibration.
We call $n$ the \emph{strand number} of $\cF$.
\end{definition}

\begin{remark}
Lemma \ref{l.transT} and Lemma \ref{l.stdnber} imply that the  the strand number of $\cF$
doesn't depend on the choice of $T$. Furthermore, by Lemma \ref{l.sn}, the stand number of an affine Hirsch foliation
is invariant under isotopic leaf-conjugacy.
\end{remark}

The following lemma explains that ``the affine property" of an affine foliation is independent of the choices of
$T$ and the foliations which are isotopically leaf-conjugate to the original affine foliation.

\begin{lemma}
\label{l.affT}
Let $M$ be a closed $3$-manifold which admits an affine Hirsch foliation $\cH_1$.
Let $\cH_2$ be a Hirsch foliation so that,
\begin{itemize}
  \item $\cH_2$ is isotopically leaf-conjugate to $\cH_1$;
  \item $\cH_2$ is transverse to an incompressible torus $T$ in $M$ and $N$ is the path closure of $M-T$.
\end{itemize}
Let $\cF_2$ be the punctured disk fibration on $N$ and $F_2$ be the circle fibration of $\cH_2$ restricting to $T$.
 Then, the projective holonomy map of $\cF$
relative to an embedded torus $T$ transverse to $\cH$ is topologically conjugate to the map $z^n$ on $S^1$
where $n$ is the strand number of $\cH_1$ and $\cH_2$.
\end{lemma}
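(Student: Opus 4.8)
The plan is to show that the ``affine'' condition, a priori stated for one particular transverse torus together with $\cH_1$, is transported to $(\cH_2,T)$ through the isotopic leaf-conjugacy. There are two independent moves to make: first replace $\cH_1$ by $\cH_2$ (same foliation up to an ambient isotopy), and second replace whatever torus witnessed the affineness of $\cH_1$ by the given torus $T$. Both moves should change the projective holonomy map only by topological conjugacy, and $z^n$ is insensitive to that.

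First I would fix an embedded torus $T_1$ transverse to $\cH_1$ with respect to which $\cH_1$ is affine by hypothesis, so the projective holonomy map $\varphi_2^{(1)}\colon S^1\to S^1$ of the circle fibration induced on $T_1$ is topologically conjugate to $z\mapsto z^n$; here $n$ is the strand number of $\cH_1$, which by Definition \ref{d.strdnumberF} together with the remark after it (invoking Lemmas \ref{l.transT} and \ref{l.stdnber}) does not depend on the choice of transverse torus and is an invariant of the isotopic leaf-conjugacy class, hence equals the strand number of $\cH_2$ as well. Let $h\colon M\to M$ be a homeomorphism isotopic to the identity carrying the leaves of $\cH_1$ to the leaves of $\cH_2$. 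Then $h(T_1)$ is a torus transverse to $\cH_2$, and $h$ restricts to a homeomorphism of pairs $(N^{(1)},\cF^{(1)})\to (\widehat{N},\widehat{\cF})$, where $N^{(1)}$ is the path closure of $M-T_1$ and $\widehat N$ that of $M-h(T_1)$; this restriction carries the punctured-disk fibration to the punctured-disk fibration and the boundary circle fibration to the boundary circle fibration, hence descends to a homeomorphism of the fiber quotient circles conjugating $\varphi_2^{(1)}$ to the projective holonomy map of $\cH_2$ relative to $h(T_1)$. So $\cH_2$ is affine relative to $h(T_1)$, with the same exponent $n$.

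It remains to pass from the torus $h(T_1)$ to the prescribed torus $T$. By Lemma \ref{l.transT}(2) and the discussion following Lemma \ref{l.topologyN} (every incompressible torus transverse to a Hirsch foliation is a JSJ torus, and the JSJ diagram is cyclic), both $h(T_1)$ and $T$ are, up to isotopy, among the maximal family of pairwise disjoint, pairwise non-parallel JSJ tori of $M$; moreover each of them is transverse to $\cH_2$ already, so I can isotope $\cH_2$ rel nothing — equivalently perturb the tori within their isotopy classes — to assume $h(T_1)$ and $T$ are disjoint (or, after the inductive transversality argument of Lemma \ref{l.transT}, simultaneously transverse). Now the computation in the proof of Lemma \ref{l.stdnber} applies verbatim: cutting along both tori produces pieces $W_1,W_2$ with $m_i$-punctured disk fibrations, and the two ways of regluing realize the two projective holonomy maps as the two ``compositions'' $\varphi_2^{(1)}\cong g_2\circ g_1$ and $\varphi_2^{(T)}\cong g_1\circ g_2$ of the same pair of circle maps $g_1,g_2$ coming from $W_1,W_2$; these two cyclic compositions are topologically conjugate to each other (via $g_1$), hence $\varphi_2^{(T)}$ is conjugate to $\varphi_2^{(1)}$, which is conjugate to $z^n$.

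The step I expect to be the main obstacle is making the middle paragraph's ``descends to a conjugacy of the quotient circles'' genuinely rigorous: one must check that $h$ can be arranged (using that it is isotopic to the identity, and the uniqueness statements of Lemmas \ref{l.fixonT} and \ref{l.fixinterior}) to respect the fibration structures on the nose rather than merely up to isotopy, so that the induced map on $S^1 = N/F$ is well defined and conjugates the holonomies; the potential subtlety is that an isotopy of $\cH_2$ used to put $T$ in transverse position could a priori alter the holonomy map, and one has to argue — again via the rigidity of punctured-disk fibrations on $N$ from Lemmas \ref{l.sn}, \ref{l.fixinterior} — that any two transverse positions of the same torus give conjugate holonomies. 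Everything else is bookkeeping on top of results already established.
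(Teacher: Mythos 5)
Your overall architecture is the same as the paper's: reduce, via the ambient homeomorphism $h$ realizing the leaf-conjugacy, to comparing the projective holonomy of a single affine Hirsch foliation with respect to two disjoint, non-parallel transverse tori, then cut along both tori into $W_1,W_2$ and compare the two holonomies through the circle maps $g_1,g_2$ induced by the two gluings. That identification (the two holonomies are the two cyclic compositions $g_2\circ g_1$ and $g_1\circ g_2$) is correct and is exactly the paper's key observation, phrased there as the relation $\varphi_2^1\circ\pi=\pi\circ\varphi_2^2$ with $\pi$ the induced covering of quotient circles.

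The genuine gap is the sentence ``these two cyclic compositions are topologically conjugate to each other (via $g_1$).'' The map $g_1$ is an $m_2$-fold covering of circles with $m_2\geq 2$ (if $m_2=1$ the piece would be $T^2\times[0,1]$ and the tori parallel), so it is not invertible and only furnishes a \emph{semi}conjugacy $g_1\circ(g_2\circ g_1)=(g_1\circ g_2)\circ g_1$. For non-invertible circle maps, $fg$ and $gf$ need not be topologically conjugate, and a map semiconjugate to $z^n$ by a covering is not conjugate to it for free; so as written the step does not establish the conclusion, and in fact the content of the lemma is precisely what this step skips. The repair is the paper's final argument, and your setup already contains all its ingredients: since $\varphi_2^{(1)}$ is conjugate to $z^n$ and the semiconjugating map is a finite covering, choose coordinates in which $\varphi_2^{(1)}=z^n$ and the covering is $z\mapsto z^m$; the semiconjugacy relation then forces $\bigl(\varphi_2^{(T)}(z)\bigr)^m=z^{nm}$ (or $\varphi_2^{(T)}(z^m)=z^{nm}$, depending on which of the two semiconjugacies you use), whence $\varphi_2^{(T)}(z)=\zeta z^n$ with $\zeta^m=1$ constant by continuity, and $\zeta z^n$ is conjugate to $z^n$ by a rotation because $n\geq 2$. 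With that paragraph inserted, your proof matches the paper's. The worry you flag at the end (making $h$ respect the fibrations so that it descends to the quotient circles) is comparatively harmless: $h$ carries leaves to leaves and the transverse torus $T_1$ to the transverse torus $h(T_1)$, so it maps the punctured-disk fibration on the cut manifold for $(\cH_1,T_1)$ to the one for $(\cH_2,h(T_1))$ fiber-to-fiber, and the induced circle map is automatically a conjugacy of the holonomies; no appeal to Lemmas \ref{l.fixonT} or \ref{l.fixinterior} is needed there.
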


To show Lemma \ref{l.affT}, by item $2$ of Lemma \ref{l.transT}, we only need to prove the following claim.

\begin{claim}
Let $M$ be a closed $3$-manifold which admits an affine Hirsch foliation $\cH$.
Let $T_1$ and $T_2$ be two incompressible tori in $M$.
Let $N_i$ ($i=1,2$) be the path closure of $M-T_i$, $\cF_i$
be the punctured disk fibration on $N_i$ and $F_i$ be the circle fibration of $\cH$ restricting to $T_i$.
Suppose that the projective holonomy map of $\cF_1$ relative to $T_1$, $\varphi_2^1$, is topologically conjugate
to the map $z^n$ on $S^1$. Then, the projective holonomy map of $\cF_2$ relative to $T_2$, $\varphi_2^2$,
 is also topologically conjugate to the map $z^n$ on $S^1$.
\end{claim}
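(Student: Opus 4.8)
The plan is to reduce everything to the two cases that arise when $T_1$ and $T_2$ are disjoint and non-parallel, since the general case follows by a standard connectedness/transitivity argument on the tree of JSJ tori (isotope to make all the tori simultaneously transverse via Lemma \ref{l.transT}, and chain the comparison along adjacent tori). So first I would assume $T_1 \cap T_2 = \emptyset$ and $T_1$ not parallel to $T_2$, and cut $M$ along $T_1 \cup T_2$ to get $W_1 \cup W_2$ exactly as in the proof of Lemma \ref{l.stdnber}, with $\cH|_{W_i} = H_i$ an $m_i$-punctured disk fibration. The key structural observation is the one already used there: $N_1 = W_1 \cup_{T_2} W_2$ and a fiber $\Sigma_1$ of $\cF_1$ is assembled from one fiber of $H_1$ and $m_1$ fibers of $H_2$, so $n = m_1 m_2$; symmetrically $N_2 = W_2 \cup_{T_1} W_1$ with fibers of size $m_2 m_1 = n$. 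Thus both $\varphi_2^1$ and $\varphi_2^2$ are degree-$n$ circle endomorphisms, and the only thing in question is whether conjugacy to $z^n$ is preserved.

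Next I would describe both projective holonomy maps as compositions. Let $P_i : N_i \to S^1$ be the fiber quotient of $\cF_i$. Inside $W_1$ the fibration $H_1$ has a fiber quotient $S^1_{(1)}$ and inside $W_2$ the fibration $H_2$ has a fiber quotient $S^1_{(2)}$; restricting to $T_2$, each side induces a circle map, and the holonomy $\varphi_2^1$ of $\cF_1$ factors as $g_2 \circ g_1$ where $g_1$ (resp. $g_2$) is the transition across $W_1$ (resp. $W_2$) — more precisely, crossing $W_1$ gives a homeomorphism on $S^1$ and then crossing $W_2$ gives a degree-$m_2$ (or degree-$m_1$) branched cover, with the total composite having degree $n$. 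The symmetric statement holds for $\varphi_2^2$: it is the composite of the same two pieces but in the opposite order, $g_1' \circ g_2'$ where $g_1', g_2'$ are conjugate (by the homeomorphisms identifying the relevant circles coming from the two ways of quotienting) to $g_1, g_2$. Hence $\varphi_2^2$ is conjugate to a cyclic reordering of the factors making up $\varphi_2^1$.

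The crux, therefore, is the elementary dynamical fact: if $f = g \circ h$ and $f' = h \circ g$ are circle endomorphisms (with $g, h$ monotone continuous, $\deg f = \deg f' = n \ge 2$), then $f$ is topologically conjugate to $z^n$ if and only if $f'$ is. This is true because $h$ itself conjugates the dynamics: $h \circ f = h \circ g \circ h = f' \circ h$, so $h$ semiconjugates $f$ to $f'$; when $f$ is conjugate to $z^n$ it is a degree-$n$ expanding-type map whose unique such semiconjugacy quotient structure forces $f'$ to also be conjugate to $z^n$ (one uses that $z^n$ is characterized up to conjugacy among degree-$n$ circle endomorphisms by, e.g., having no wandering intervals and no attracting/neutral periodic points, or equivalently by being topologically expanding — a property manifestly shared by any map semiconjugate both ways to it). I would state this as a small self-contained sub-lemma and prove it by the $h \circ f = f' \circ h$ identity together with the known characterization of $z^n$ up to conjugacy.

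The main obstacle I anticipate is making the "factorization into $g_1, g_2$ and the reordering" step genuinely rigorous: one must check that cutting $N_1$ along $T_2$ really does exhibit $\varphi_2^1$ as a composition of the two well-defined circle maps associated to $W_1$ and $W_2$, that these maps do not depend on auxiliary choices up to conjugacy, and that the maps obtained from $N_2$ are the honest transpositions of these (the identifications of the four a priori different circles — two from $W_1$, two from $W_2$, plus $S^1$ for $N_1$ and $N_2$ — must be tracked carefully using the $S^1$-fibration structures on $T_1, T_2$ induced by the punctured-disk fibrations). Once that bookkeeping is in place, the dynamical sub-lemma finishes the proof, and the reduction from the general (possibly intersecting or parallel) case to the disjoint non-parallel one is routine given Lemma \ref{l.transT} and the fact that any two of the finitely many JSJ tori are connected through a chain of pairwise-disjoint ones.
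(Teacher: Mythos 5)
Your geometric setup is essentially the paper's: after reducing to disjoint, non-parallel tori via Lemma \ref{l.transT} and cutting along $T_1\cup T_2$, the two holonomies factor as $\varphi_2^1=G_2\circ G_1$ and $\varphi_2^2=G_1\circ G_2$, where $G_1$ and $G_2$ are the transition maps across the two pieces (each is a homeomorphism of quotient circles followed by a covering of degree $m_2$, resp.\ $m_1$; your bookkeeping ``a homeomorphism, then a degree-$m_i$ branched cover'' does not multiply to $n$ as stated, but the factorization idea is right since $n=m_1m_2$). The paper's key observation is exactly your cyclic identity specialized to the factor $\pi=G_2$: it constructs the $m$-fold covering $\pi\colon S_2^1\to S_1^1$ and verifies $\varphi_2^1\circ\pi=\pi\circ\varphi_2^2$, which is $G_2(G_1G_2)=(G_2G_1)G_2$.

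The gap is in your dynamical finish. You use the other factor $h=G_1$, giving $h\circ f=f'\circ h$ with the known map $f=\varphi_2^1$ upstairs and the unknown $f'=\varphi_2^2$ downstairs, and then assert that conjugacy to $z^n$ descends because $z^n$ is ``characterized by no wandering intervals and no attracting/neutral periodic points,'' a property ``manifestly shared'' by any map semiconjugate both ways to it. Neither half of this is proved, and neither is manifest: a priori $h$ or $g$ could collapse intervals, and transporting expansion through a non-injective monotone semiconjugacy requires work (one must first show that $h$ and $g$ collapse no interval --- e.g.\ because $f$, being conjugate to $z^n$, is a covering map and topologically exact --- hence are coverings, then that exactness descends to $f'$, and finally that an exact degree-$n$ covering of $S^1$ is conjugate to $z^n$ via injectivity of the monotone Shub-type semiconjugacy). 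All of this is fillable, but the paper's route avoids it entirely by using the semiconjugacy in the direction where the \emph{unknown} map sits upstairs of a covering: with $\pi\circ\varphi_2^2=\varphi_2^1\circ\pi$, choose the coordinate on $S_1^1$ so that $\varphi_2^1=z^n$ and then the coordinate on $S_2^1$ so that $\pi=z^m$ (only the upstairs coordinate needs adjusting), whence $(\varphi_2^2(z))^m=z^{nm}$, so $\varphi_2^2(z)=\omega z^n$ with $\omega^m=1$, which is conjugate to $z^n$ by a rotation. Note this normalization is not available in your direction (one cannot in general make $f=z^n$ and $h=z^m$ simultaneously, since the fibers of $h$ need not be rotation orbits in the coordinate fixed for $f$), which is precisely why your version needs the extra dynamical lemma. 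Either switch to the factor $G_2$ and run this two-line computation --- which is the paper's proof --- or supply a complete proof of your sub-lemma.
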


\begin{proof}
By Lemma \ref{l.transT}, we can suppose that $T_1$ and $T_2$ are disjoint and non-parallel.
Let the path closure of $M-T_1 \cup T_2$ be the union of two compact $3$-manifolds $M_1$ and $M_2$ such that,
\begin{enumerate}
  \item $\partial M_i =\partial^{out} M_i \cup \partial^{in} M_i$ ($i=1,2$);
  \item $M$ is the union of $M_1$ and $M_2$ by the gluing maps $\varphi^1 : \partial^{out} M_1 \to \partial^{in} M_2$
  and $\varphi^2 : \partial^{out} M_2 \to \partial^{in} M_1$;
  \item $\partial^{out} M_1$ and $\partial^{in} M_2$ are corresponding to $T_1$ and $\partial^{out} M_2$ and
  $\partial^{in} M_1$ are corresponding to $T_2$.
\end{enumerate}
Under these notations, $N_1= M_2 \cup_{\varphi_2} M_1$ and $N_2 =M_1 \cup_{\varphi_1} M_2$.
We denote by $P_i: N_i \to S_i^1$ the quotient map of the fiber quotient space of $\cF_i$.

We can define an $m$-covering map $\pi: S_2^1 \to S_1^1$ ($m\in \mathbb{N}$) as follows.
For every  $z_2 \in S_2^1$, since $S_2^1$  can be regarded as the quotient space of the circle fibration $F_2$ on $\partial^{out} M_2$,
we can regard $z_2$ as a fiber of $F_2$. Also notice that $\partial^{out}M_2$ is embedded into $N_1$,
then the fiber $z_2$ is in some punctured disk fiber of $\cF_1$.
Therefore, the quotient map $P_1: N_1 \to S_1^1$ naturally induces a map $\pi: S_2^1 \to S_1^1$. One can easily check that
$\pi$ is an $m$-covering map.

We claim that $\varphi_2^1 \circ \pi=\pi \circ \varphi_2^2 $ which is the key observation for the proof.
Now let's check this claim. For every point $x_i \in N_i$ ($i=1,2$),
we denote by $\langle x_i \rangle_i \in S_i^1$ the fiber of $\cF_i$ where $x_i$ stays at. Let $x_2$ be a point in $\partial^{out} M_2 \subset N_2$
and $x_1$ be a point in $\partial^{out} M_1 \subset N_1$ such that $\langle x_1 \rangle_1 =\pi(\langle x_2 \rangle_2)$.
Then one can easily  show that
$P_1 \circ \varphi^1 (x_1) = \pi \circ P_2 \circ \varphi^2 (x_2)$ by following the definitions of $P_i$, $\varphi^i$ and $\pi$ ($i=1,2$).
Note
$P_1 \circ \varphi^1 (x_1) = \varphi_2^1 (\langle x_1 \rangle_1)$ and
$\pi \circ P_2 \circ \varphi^2 (x_2)= \pi\circ \varphi_2^2 (\langle x_2 \rangle_2)$.
By the equalities above, we have $\varphi_2^1 \circ \pi (\langle x_2 \rangle_2)=\pi \circ \varphi_2^2 (\langle x_2 \rangle_2)$ for
every $\langle x_2 \rangle_2 \in S_2^1$.

Since $\varphi_2^1$ is affine, we can endow a suitable metric on $S_1^1$ such that $S_1^1 =\{z\mid |z|=1, z\in \mathbb{C}\}$ and $\varphi_2^1 =z^n$
for some $n\in \mathbb{N}$ ($n\geq 2$). Since  $\pi: S_2^1 \to S_1^1$ is an  $m$-covering map,
we also can endow a metric on $S_2^1$ such that  $S_2^1 =\{z\mid |z|=1, z\in \mathbb{C}\}$ and $\pi (z)=z^m$ for every
$z\in S_2^1$. Moreover, by the fact that $\pi \circ \varphi_2^2 = \varphi_2^1 \circ \pi$, we have
$\varphi_2^2= z^n: S_2^1 \to S_2^1$.
\end{proof}

\begin{proposition} \label{p.oneside}
Let $T$ be an incompressible torus on a closed $3$-manifold $M$.  We denote $N$ by
the path closure of $M-T$, so that $\partial N$ is the union of
$T^{out}$ and $T^{in}$.
 Then up to isotopic leaf-conjugacy,
there exists at most one affine Hirsch foliation $\cH$ so that
$\cH$ is transverse to $T$ and
 $\cH \mid_{N}$
is a punctured disk fibration so that each fiber of $\cH \mid_{N}$ intersects  $T^{out}$ at one
connected component.
\end{proposition}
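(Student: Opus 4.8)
Let $\cH$ and $\cH'$ be two affine Hirsch foliations on $M$, each transverse to $T$ and each inducing on $N$ a punctured-disk fibration whose fibers meet $T^{out}$ in a single circle. The goal is to produce a homeomorphism of $M$, isotopic to the identity, carrying $\cH$ to $\cH'$. I would do this in three steps: first pin down the common boundary slope on $\partial N$ of the two induced fibrations, then pin down the two fibrations themselves, and finally reconcile the two gluings across $T$ — the last step being the only place the hypothesis ``affine'' is used.

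\emph{Steps 1 and 2 (the fibration of $N$ is forced).} Write $F=\cH\mid_N$, with fiber $\Sigma$ and $\partial\Sigma=c_1\cup c_2^1\cup\cdots\cup c_2^n$, $c_1\subset T^{out}$, oriented as in Lemma \ref{l.sn}; then $c_1=m_1+n^2q_2\,l_1$ and $c_2=m_2+q_2\,l_2$ for some $q_2\in\mathbb Z$, the number of punctures being the invariant $n$ of $N$ supplied by Lemma \ref{l.homologyN}; do the same for $F'=\cH'\mid_N$, obtaining $q_2'$. The manifold $M$ is recovered from $N$ by a gluing homeomorphism $\varphi\colon T^{out}\to T^{in}$ (determined up to isotopy by $(M,T)$), and because $\cH$ descends to a foliation across $T$, $\varphi$ carries the circle fibration $\cF_1$ of $T^{out}$ onto $\cF_2$ of $T^{in}$ and respects the orientation conventions of Section \ref{s.prel}; in particular $\varphi(c_1)$ is isotopic to $c_2$, and likewise $\varphi(c_1')$ to $c_2'$. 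Constraining $\varphi_\star$ on $H_1$ via Lemma \ref{l.homologyN} and folding the residual ambiguity of $\varphi$ into a self-homeomorphism of $N$ (to be absorbed later into the isotopy of $M$), one puts $\varphi$ into the normal form $\varphi(m_1)=m_2$, $\varphi(l_1)=l_2+y\,m_2$; Lemma \ref{l.fixonT}, applied to $c_1$ and to $c_1'$, then forces $q_2=0=q_2'$. Hence $\partial\Sigma$ is isotopic to $\partial\Sigma'$ in $\partial N$, and after such an isotopy we may assume $\partial\Sigma=\partial\Sigma'$; in particular $\cH$ and $\cH'$ induce the same foliation by circles on $T$. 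Now Lemma \ref{l.fixinterior} provides an isotopy of $N$, rel $\partial N$, carrying $\Sigma$ to $\Sigma'$; since a fibration of a $3$-manifold over $S^1$ is recovered up to isotopy from (the isotopy class of) one of its fibers, this promotes to an isotopy of $N$, rel $\partial N$, carrying $F$ to $F'$, and extending it by the identity over a collar of $T$ gives a homeomorphism of $M$ isotopic to the identity, after which $\cH\mid_N=\cH'\mid_N=:F$.

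\emph{Step 3 (reconcile the gluings; this is where ``affine'' enters).} At this stage $\cH$ and $\cH'$ are both obtained by regluing the same fibered piece $(N,F)$ across $T$; the only freedom remaining is the dynamical data carried by the gluing map beyond its isotopy class, recorded by the projective holonomy map of the induced circle fibration relative to $T$ — a degree-$n$ self-covering of $S^1$, say $g$ for $\cH$ and $g'$ for $\cH'$. By Definition \ref{d.affinehirsch} together with Lemma \ref{l.affT}, both $g$ and $g'$ are topologically conjugate to $z\mapsto z^n$, hence to each other by a circle homeomorphism $\psi$, which may be taken orientation-preserving and therefore isotopic to the identity. Transporting $\psi$ along the fibers of $F$ produces a homeomorphism of $N$ isotopic to the identity that intertwines the two regluings, and hence descends to a homeomorphism of $M$ isotopic to the identity and carrying $\cH$ to $\cH'$. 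Composing with the homeomorphism of the previous step finishes the proof.

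\emph{Expected main obstacle.} The technical heart is Step 1: showing that the topological gluing $\varphi$ is constrained by $(M,T)$ tightly enough — up to precisely those ambiguities absorbable into an isotopy of $M$ — to be put into the normal form needed for Lemma \ref{l.fixonT}, so that the Hirsch condition really does pin the slope to $q_2=0$. Step 3 is where ``affine'' is indispensable: for a general Hirsch foliation $g$ and $g'$ are merely topologically conjugate degree-$n$ circle maps, with no reason for two such to be conjugate by a homeomorphism realizable through an isotopy of $M$; it is exactly the rigidity of the model $z\mapsto z^n$ — any two circle maps topologically conjugate to it are topologically conjugate to one another — that lets the argument close.
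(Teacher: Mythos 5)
Your outline follows the same skeleton as the paper (Lemma \ref{l.fixonT} to pin the boundary slopes, Lemma \ref{l.fixinterior} for the interior, Lemma \ref{l.affT} for the holonomy), but the order in which you use these ingredients creates a genuine gap in Step 2, and it makes Step 3 vacuous. In Step 2 you match the boundary circle foliations and \emph{one} pair of fibers, and then claim that Lemma \ref{l.fixinterior} plus ``a fibration over $S^1$ is recovered up to isotopy from one of its fibers'' yields an isotopy of $N$ \emph{rel} $\partial N$ carrying $F$ to $F'$. This is not justified, and in the relevant generality it is false: an isotopy rel $\partial N$ fixes every boundary circle, so it must preserve the way the circles of $\partial N$ are grouped into fibers; but the two fibrations may group them differently. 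Indeed, under the identification of $T^{out}$ and $T^{in}$ with $T$, this grouping data is exactly the projective holonomy map of each foliation relative to $T$, and for two affine Hirsch foliations these are two degree-$n$ maps that are each conjugate to $z\mapsto z^n$ but need not be \emph{equal}. When they are not equal, no isotopy rel $\partial N$ (indeed no homeomorphism of $N$ fixing $\partial N$ pointwise) can carry one fibration to the other, no matter how many single fibers you match. This is precisely why the paper inserts, \emph{before} the interior-matching step, the homeomorphism $G$ of $M$: it lifts a circle homeomorphism $g$ conjugating $\varphi_2^1$ to $\varphi_2^2$ (supplied by Lemma \ref{l.affT}, i.e.\ by affineness) to a map $G_T$ of $T$ isotopic to the identity and extends it over $M$; only after replacing $\cH_1$ by $G(\cH_1)$ do \emph{all} fibers of the two fibrations on $N$ have matching boundaries, and only then does Lemma \ref{l.fixinterior} (applied fiberwise) produce $\phi\colon N\to N$ isotopic to the identity rel $\partial N$ with $\phi(\cF_1')=\cF_2$.

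Consequently your Step 3 does not do the work you assign to it: $M$ and the identification of $T^{out}$ with $T^{in}$ inside $M$ are fixed, so there is only one gluing, and if Step 2 had really achieved $\cH\mid_N=\cH'\mid_N$ then $\cH=\cH'$ outright — there are no ``two regluings'' left to reconcile. The affine hypothesis cannot be postponed to a final reconciliation of gluing data; it must be spent earlier, to obtain the conjugacy $g$ that makes the boundary data of the two fibrations coherent so that the rel-$\partial N$ interior argument can even begin. (Your Step 1 is essentially the paper's first step and is fine; the ``transport $\psi$ along the fibers'' mechanism in Step 3 is also shakier than the paper's construction, which only lifts $g$ to $T$ and extends arbitrarily over $M$.)
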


\begin{proof}
We assume that $\cH_1$ and $\cH_2$ are two affine Hirsch foliations on $M$ which satisfy the conditions in the proposition.
Let $\cF_1$ and $\cF_2$ be the punctured disk fibrations induced by $\cH_1$ and $\cH_2$ on $N$ respectively.
Suppose $\varphi: T^{out} \to T^{in}$ is the  gluing map so that $M=N\setminus x\sim \varphi (x), x \in T^{out}$.

$F_i^{out}=\cF_i \cap T^{out}$ ($i=1,2$) is a $S^1$-fibration on $T^{out}$. Similarly, $F_i^{in}=\cF_i \cap T^{in}$
is a $S^1$-fibration on $T^{in}$. We denote a fiber of $F_1^{out}$ (\emph{resp.} $F_1^{in}$, $F_2^{out}$, $F_2^{in}$) by
$m_1$ (\emph{resp.}  $m_2$, $c_1$, $c_2$). Then, up to isotopy, $\varphi(m_1)= m_2$ and $\varphi(c_1)=c_2$.
By Lemma \ref{l.fixonT}, $c_1$ is isotopic to $m_1$ in $T^{out}$
and $c_2$ is isotopic to $m_2$ in $T^{in}$.
Then, we can suppose that $\cH_1 \cap T= \cH_2 \cap T$, denoted by $F$. Here $F$ is a circle fibration on
$T$.

Since each of $\cH_1$ and $\cH_2$ is an affine Hirsch foliation, by Lemma \ref{l.affT},
the projective holonomy maps  $\varphi_2^1:S^1 \to S^1$ of $\cH_1$ and $\varphi_2^2 : S^1 \to S^1$ of $\cH_2$ relative to $T$
are conjugated by an orientation preserving homeomorphism $g: S^1 \to S^1$,
 \emph{i.e.} $\varphi_2^2 \circ g = g \circ \varphi_2^1$.

Recall that  $P:N \to S^1$  is a natural quotient map where $S^1$ is the fiber quotient space of $F$.
One can lift $g$ to a homeomorphism $G_T: T \to T$ so that,
\begin{itemize}
  \item $G_T$ is isotopic to the identity map on
$T$;
  \item $P\circ G_T =g \circ P$.
\end{itemize}
Since $G_T$ is isotopic to the identity map on
$T$, we can extend $G_T$  to a homeomorphism $G:M\to M$ which is isotopic to the identity map
on $M$. Assume that $\cH_1' = G(\cH_1)$ is also an affine Hirsch foliation on $N$.
Let $\cF_1'$ be the punctured disk fibrations induced by $\cH_1'$ on $N$.
By $P\circ G_T =g \circ P$ and $\varphi_2^2 \circ g = g \circ \varphi_2^1$,
one can quickly check that  the boundaries of $\cF_1'$ and $\cF_2$ are coherent, i.e.
 for every fiber $\Sigma_1 \subset \cF_1'$, there exists a fiber $\Sigma_2$ so that $\partial \Sigma_1 =\partial \Sigma_2$.
 Then by Lemma \ref{l.fixinterior}, one can build a homeomorphism $\phi: N\to N$ so that,
 \begin{itemize}
   \item $\phi$ is isotopic to the identity map on $N$ relative to $\partial N$;
   \item $\phi (\cF_1')=\cF_2$.
 \end{itemize}
 $\phi$ can automatically induce  a homeomorphism $\Phi$ on $M$ so that,
 \begin{itemize}
   \item $\Phi (x)=\phi (x)$ for every $x$ in the interior of $N$;
   \item $\Phi$ is isotopic to the identity map on $M$;
   \item $\Phi (\cH_1')= \cH_2$.
 \end{itemize}
 In summary, $\Phi\circ G$ is a homeomorphism on $M$ so that,
 \begin{itemize}
 \item $\Phi\circ G$ is isotopic to the identity map on $M$;
 \item $\Phi\circ G(\cH_1) =\cH_2$.
 \end{itemize}

\end{proof}

Now we can finish the proof of Theorem \ref{t.maint}, \emph{i.e.} up to isotopic leaf-conjugacy, a closed orientable $3$-manifold
admits at most two affine Hirsch foliations.

\begin{proof} [Proof of Theorem \ref{t.maint}]
Let $\cH$ be an affine Hirsch foliation
and $T$ be an incompressible torus in $M$.
By Lemma \ref{l.transT},  we can suppose that
$\cH$ is transverse to $T$.
We denote the path closure of $M-T$ by $N$
and the boundary of $N$ by the union of $T^{out}$
and $T^{in}$.
Then, $F=\cH \mid_N$ is a punctured disk fibration on $N$.
$F$ has the following two possibilities:
\begin{enumerate}
  \item each leaf of $F$ intersects to  $T^{out}$ at one
connected component;
  \item each leaf of $F$ intersects to $T^{in}$ at one
connected component.
\end{enumerate}
In every case, by  Proposition \ref{p.oneside}, up to isotopic leaf-conjugacy,
there exists at most  one affine Hirsch foliation. The conclusion of the theorem follows.
 \end{proof}


\section{Hirsch manifolds and exchangeably braided links}
\label{s.twohirsch}
In this section, we will focus on the study of Hirsch manifolds, \emph{i.e.} the closed $3$-manifolds which
admit two non-isotopically leaf-conjugate affine Hirsch foliations. First, we will introduce or recall some notations
(see Figure \ref{f.notationsection4} as an illustration\footnote{In the case of the figure,
  $c_1=c_1^1=l_1$. To avoid misunderstanding, we should point it out that generally, we can think $c_1=c_1^1$ but $l_1$ may not be isotopic to $c_1$.})
which are useful below:
\begin{itemize}
  \item $\cH_1$:  an affine Hirsch foliation transverse to $T$ in $M$;
  \item $N, T^{out}, T^{in}, \varphi$: $M=N\setminus x\sim \varphi(x)$, $\partial N =T^{out} \cup T^{in}$ and $\varphi: T^{out} \to T^{in}$ is the gluing homeomorphism;
  \item $m_1, l_1; m_2,l_2$: $\cH_1$ induces oriented simple closed curves $m_1, l_1$ in $T^{out}$ and $m_2, l_2$ in $T^{in}$
  which are defined at the beginning of Section \ref{s.proof}.
  \item $c_2$: $p_2 m_2 + q_2 l_2$ ($q_2> 0$), an oriented simple closed curve in $T^{in}$;
  \item $c_1$: $p_1 m_1 + q_1 l_1$, an oriented simple closed curve in $T^{out}$;
  \item $c_1^1, \dots, c_1^s$: $s$  pairwise disjoint oriented simple closed curves
which are parallel to $c_1$ in $T^{out}$;
  \item $\Sigma_2$: an oriented punctured disk in $N$ so that $\partial \Sigma_2$ is the union of  $c_1^1, \dots, c_1^s$
and $c_2$;
  \item $F_2$: an oriented punctured disk fibration on $N$ with a fiber $\Sigma_2$;
  \item $\varphi:T^{out} \to T^{in}$: $\varphi(m_1)=m_2$ and $\varphi(l_1)=l_2 + km_2$.
\end{itemize}

 \begin{figure}[htp]
\begin{center}
  \includegraphics[totalheight=7.5cm]{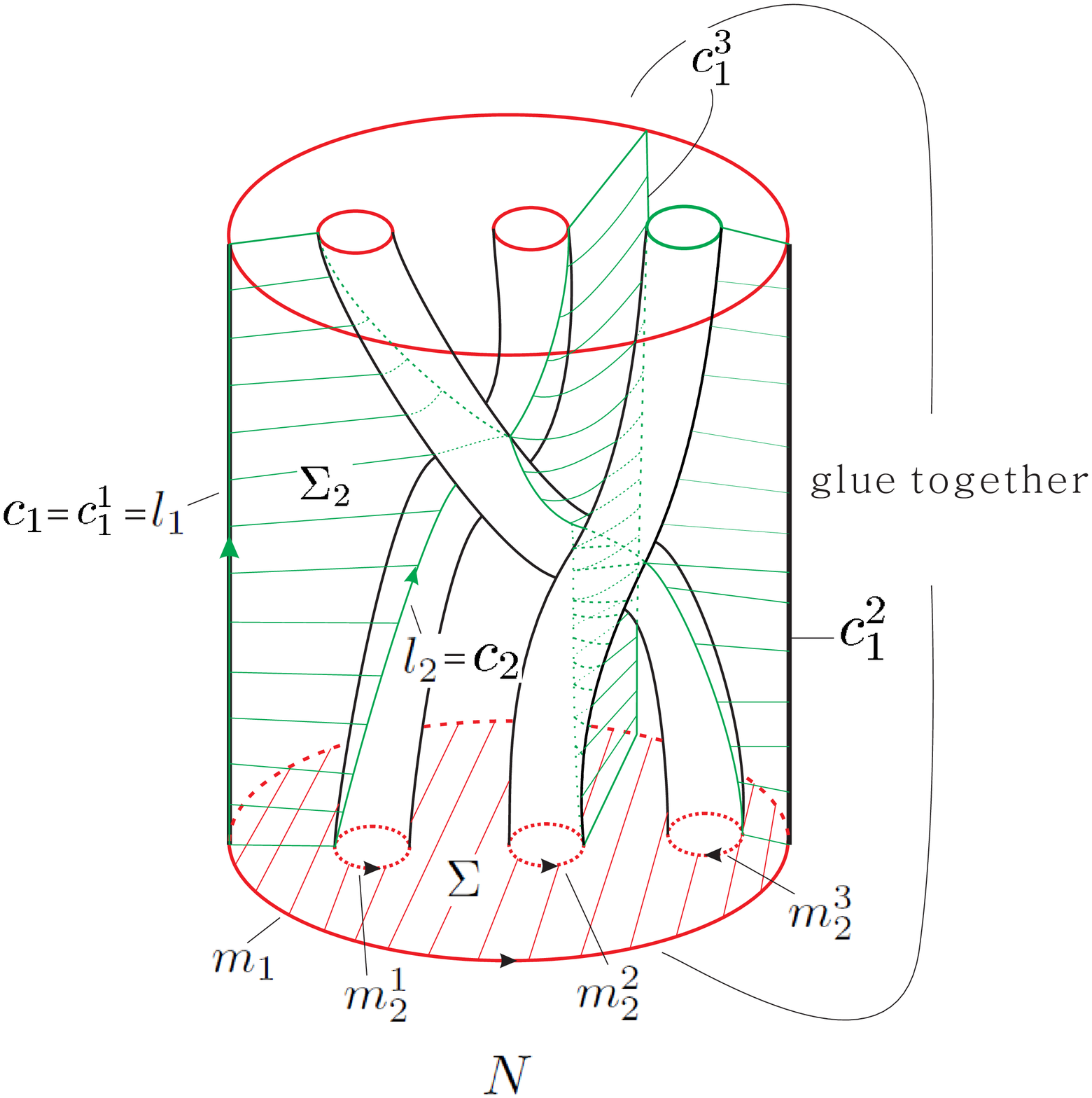}
  \caption{some notations in the case $N$ associated to the braid $\sigma_1\sigma_2^{-1}$}
  \label{f.notationsection4}
  \end{center}
\end{figure}

\subsection{Homology and Hirsch manifolds}
In this subsection, for every two nonzero integers $m$ and $n$, we will use $[m,n]$ to
represent their greatest positive common divisor.

\begin{lemma}\label{l.homologyF2}
Suppose $F_2$ also induces an affine Hirsch foliation $\cH_2$ on $M$ under $\varphi$.
Then, $p_1 =\frac{k}{[n^2 -1, k]}$, $q_1 =\frac{n^2 -1}{[n^2 -1, k]}$,
$p_2 =n^2 p_1$, $q_2 =q_1$ and $s=n$.
\end{lemma}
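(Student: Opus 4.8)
The plan is to carry out the homology computation in $N$, exactly as in the proof of Lemma~\ref{l.sn}, but now applied simultaneously to the fibration $F_2$ with fiber $\Sigma_2$ and to the condition that gluing $F_2$ via $\varphi$ produces a \emph{second} Hirsch foliation $\cH_2$. First I would record what Lemma~\ref{l.sn} already gives when applied to the $s$-punctured disk fibration $F_2$ on $N$ directly: after orienting $\Sigma_2$ appropriately, $s=n$, and the boundary curves are constrained by $[c_1] = s[c_2]$ in $H_1(N)$. However, here the roles of $T^{out}$ and $T^{in}$ are partly swapped relative to the setup of Lemma~\ref{l.sn} — in that lemma the single boundary curve lies on $T^{out}$, while now $c_2$ is the single curve on $T^{in}$ and the $s$ parallel copies $c_1^1,\dots,c_1^s$ lie on $T^{out}$ — so I would re-run the Alexander-duality/solid-torus-filling argument with the two boundary tori interchanged. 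Filling $N$ along $c_1$ (the curve carrying the $s$ parallel copies) to get a solid torus forces, via $H_1$, the coefficient conditions; this is where $p_1 = k/[n^2-1,k]$ and $q_1 = (n^2-1)/[n^2-1,k]$ should emerge once the map $\varphi$ is brought in.

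Concretely, the key steps in order are: (1) Using Lemma~\ref{l.homologyN}, write $H_1(N) = \ZZ[l_1]\oplus \ZZ[m_2]$ with $[m_1]=n[m_2]$, $[l_2]=n[l_1]$, and express $[c_2]=p_2[m_2]+q_2[l_2] = p_2[m_2]+nq_2[l_1]$ and $[c_1]=p_1[m_1]+q_1[l_1]=np_1[m_2]+q_1[l_1]$. (2) Apply the punctured-disk relation $[c_1]=s[c_2]$ coming from $\partial\Sigma_2$ to get $np_1 = sp_2$ and $q_1 = snq_2$; combined with coprimality of $(p_1,q_1)$ and of $(p_2,q_2)$ this is the same bookkeeping as in Lemma~\ref{l.sn} and yields $s=n$ together with a proportionality between the two pairs. (3) Now impose that $\varphi$, which satisfies $\varphi(m_1)=m_2$, $\varphi(l_1)=l_2+km_2$, glues $F_2$ into an \emph{honest} Hirsch foliation on $M$: this means $\varphi(c_1^i)$ must be isotopic in $T^{in}$ to the boundary curves of $\Sigma_2$ on the $T^{in}$ side, i.e.\ to $c_2$ (up to the appropriate multiplicity/orientation). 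Pushing $\varphi_\star([c_1])$ through — $\varphi_\star([c_1]) = p_1\varphi_\star([m_1]) + q_1\varphi_\star([l_1]) = p_1 n[m_2] + q_1([l_2]+k[m_2])$ — and comparing with the required image $n[c_2]$ (or $[c_2]$, depending on which side is the "one component" side) gives two scalar equations in $p_1,q_1,p_2,q_2,k,n$. (4) Solve this linear system: the $[l_2]$-coefficients give $q_1 = $ (multiple of) $q_2$, the $[m_2]$-coefficients give a relation forcing $q_1 k \equiv 0$-type divisibility, and coprimality of $(p_1,q_1)$ pins down $q_1 = (n^2-1)/[n^2-1,k]$ and $p_1 = k/[n^2-1,k]$; then $p_2 = n^2p_1$, $q_2 = q_1$ fall out of step (2)'s proportionality.

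The main obstacle I expect is step (3): correctly translating "$F_2$ glued by $\varphi$ induces a Hirsch foliation $\cH_2$" into a precise homological equation on $T^{in}$. One must be careful about \emph{which} boundary component of $\Sigma_2$ is the single curve and which is the bundle of $n$ parallel copies, because in $\cH_2$ the curve $c_2$ on $T^{in}$ plays the role of the "$n$ parallel copies" side after regluing (the $n$-punctured-disk structure of $\cH_2$ sees $c_1$ as the single curve and $c_2$ as the $n$ copies, which is the opposite of how $F_2$ sits in $N$). Getting this matching right — essentially, that $\varphi_\star[c_1] = n[c_2]$ in the relevant sense together with the parallel requirement $\varphi(c_1)$ isotopic to $c_2$ — is the crux; once the correct equation is written, the rest is the elementary number theory of $[n^2-1,k]$ already foreshadowed by the notation introduced at the start of the subsection. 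A secondary point to watch is orientations: the lemma asserts equalities (not just up to sign) of $p_i,q_i$, so the orientation of $\Sigma_2$ must be fixed consistently with the conventions of Lemma~\ref{l.sn} and with $q_2>0$ as stipulated in the notation list, and signs must be tracked through $\varphi_\star$.
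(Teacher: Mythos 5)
Your overall strategy is the paper's: extract one equation from $\partial\Sigma_2$ being null-homologous in $N$, one coprimality condition from filling a solid torus along $T^{out}$ so that $c_1$ bounds, and one equation from the gluing condition via $\varphi_\star$, then finish with elementary arithmetic. However, both of your key homological equations are wrong as written, and the step you yourself flag as the crux is left unresolved with an incorrect guess. First, since $c_2$ is the \emph{single} boundary component of $\Sigma_2$ and $c_1^1,\dots,c_1^s$ are the $s$ parallel copies (you note this swap in your opening paragraph and then ignore it), the relation in $H_1(N)$ is $[c_2]=s[c_1]$, i.e.\ $p_2=snp_1$ and $nq_2=sq_1$, not your $[c_1]=s[c_2]$ giving $np_1=sp_2$, $q_1=snq_2$. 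Your reversed version is inconsistent with the rest of the data: e.g.\ $q_1=snq_2$ together with the correct gluing relation $q_1=q_2>0$ forces $sn=1$, and together with the filling conclusion it forces $n\mid q_1$ while $\gcd(np_1,q_1)=1$. Also note that the filling along $T^{out}$ yields the stronger statement $\gcd(np_1,q_1)=1$, not merely $\gcd(p_1,q_1)=1$; the weaker coprimality you invoke does not by itself pin down $s=n$ (e.g.\ $n=4$, $s=2$, $(p_1,q_1)=(1,2)$, $(p_2,q_2)=(8,1)$ satisfies your constraints).

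Second, the gluing condition is simpler than you make it: for $F_2$ to descend to a foliation $\cH_2$ on $M$, the map $\varphi$ must take the circle fibration $F_2\cap T^{out}$ (fibers parallel to $c_1$) to $F_2\cap T^{in}$ (fibers parallel to $c_2$), so $\varphi(c_1)$ is isotopic to $c_2$ in $T^{in}$, hence $\varphi_\star[c_1]=[c_2]$ in $H_1(T^{in})$ — there is no factor $n$, and no ambiguity between $[c_2]$ and $n[c_2]$. Moreover this computation lives in $H_1(T^{in})$ with basis $[m_2],[l_2]$, where $\varphi_\star[m_1]=[m_2]$ because $\varphi(m_1)=m_2$; your $\varphi_\star[m_1]=n[m_2]$ conflates this with the relation $[m_1]=n[m_2]$, which holds only after including into $H_1(N)$. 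The correct computation gives $\varphi_\star[c_1]=(p_1+q_1k)[m_2]+q_1[l_2]=p_2[m_2]+q_2[l_2]$, i.e.\ $p_1+q_1k=p_2$ and $q_1=q_2$; combined with $p_2=snp_1$, $nq_2=sq_1$ and $\gcd(np_1,q_1)=1$ this yields $s=n$, $p_2=n^2p_1$, $q_1k=(n^2-1)p_1$, and then $p_1=k/[n^2-1,k]$, $q_1=(n^2-1)/[n^2-1,k]$. As proposed, your system of equations does not produce these conclusions, so the proof has a genuine gap at exactly the step you identified as the main obstacle.
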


\begin{proof}
Since $\partial (\Sigma_2) =c_2 \cup c_1^1 \cup \dots \cup c_1^s$,
$[c_2]=s [c_1]$ in $H_1 (N)$.
Equivalently, $p_2 [m_2]+ q_2 [l_2]= sp_1 [m_1] +s q_1 [l_1]$.
Recall that $[m_1] =n [m_2]$ and $[l_2]= n[l_1]$, then,
$(snp_1 -p_2)[m_2] + (nq_2 -sq_1)[l_1]=0$. Recall that $H_1 (N)=\langle [m_2], [l_1] \rangle \cong \mathbb{Z}\oplus\mathbb{Z}$.
Then,

    \begin{equation}\label{r.homologyF2-1}
    p_2=snp_1, nq_2 =s q_1.
    \end{equation}

By filling a solid torus to  $N$ along $T^{out}$, we obtain a new compact $3$-manifold  $V$ so that $c_1$ bounds a disk in $V$, then
$V$ is homeomorphic to a solid torus. Following the gluing surgery, we have
\begin{eqnarray*}
H_1 (V) &=& \langle [m_2], [l_1]\mid p_1 [m_1]+ q_1 [l_1]=0 \rangle \nonumber\\
          &=& \langle [m_2], [l_1]\mid np_1 [m_2]+ q_1[l_1]=0 \rangle \\
          &\cong& \mathbb{Z}
\end{eqnarray*}

Then we have,
\begin{equation}\label{r.homologyF2-2}
   np_1  \hbox{ and }  q_1  \hbox { are coprime}.
    \end{equation}

Denote $\varphi_{\star}: H_1 (T^{out}) \to H_1 (T^{in})$ to be the homomorphism induced by $\varphi: T^{out} \to T^{in}$.
Notice that $F_2$ also induces an affine Hirsch foliation $\cH_2$ on $M$. Then, on the one hand,
\begin{eqnarray*}
    \varphi_{\star} ([c_1]) &=&[c_2]\nonumber\\
                            &=&p_2 [m_2] +q_2 [l_2];
 \end{eqnarray*}
 on the other hand,
 \begin{eqnarray*}
    \varphi_{\star}([c_1]) &=& \varphi_{\star} (p_1 [m_1]+q_1 [l_1])\nonumber\\
                           &=&p_1 [m_2] +q_1 (k[m_2] + [l_2])\nonumber\\
                           &=&(p_1 +q_1 k)[m_2] + q_1 [l_2].
\end{eqnarray*}

Therefore,
\begin{equation}\label{r.homologyF2-3}
   p_1 + q_1 k= p_2  \hbox{ and }  q_1 =q_2.
    \end{equation}

Now, the lemma is a direct consequence of (\ref{r.homologyF2-1}), (\ref{r.homologyF2-2}),
(\ref{r.homologyF2-3}) and the fact that $p_i$ and $q_i$ ($i=1,2$) are coprime.

\end{proof}

We can use the strand number of a braid which
build a (affine) Hirsch foliation on $M$ to be an invariant of $M$.
The strand number of $M$ is well defined. Let us explain a little bit more.
Maybe there are two braids to build the same Hirsch manifold $M$, by Lemma \ref{l.stdnber}, Definition \ref{d.strdnumberF} and Lemma \ref{l.homologyF2}, the strand numbers
of the braids are the same.

\begin{definition}\label{d.strandnumber}
Let $M$ be a Hirsch manifold.  The strand number of a braid $b$ which builds a (affine) Hirsch foliation on
$M$ is called the \emph{strand number} of $M$.
\end{definition}

\begin{proposition}
\label{p.nonisotopy}
Let $\cH_1$ and $\cH_2$ be two affine Hirsch foliations defined as above on a Hirsch manifold $M$.
Then $\cH_1$ and $\cH_2$ are not isotopically leaf-conjugate.
\end{proposition}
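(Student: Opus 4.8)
The plan is to argue by contradiction: suppose there were an ambient isotopy $H_t: M \to M$ with $H_0 = \mathrm{id}$ and $H_1(\cH_1) = \cH_2$, and derive a contradiction from the homological data recorded in Lemma~\ref{l.homologyF2} together with the rigidity of incompressible tori established in Corollary~\ref{c.incomp} and Lemma~\ref{l.transT}. The starting point is that an isotopic leaf-conjugacy in particular is a homeomorphism of $M$ isotopic to the identity, hence acts trivially on $H_1(M;\mathbb{Z})$; moreover it must carry an incompressible torus transverse to $\cH_1$ to an incompressible torus transverse to $\cH_2$, and by Corollary~\ref{c.incomp} every incompressible torus in $M$ is a JSJ torus, so (after a further isotopy preserving leaf-conjugacy, using Lemma~\ref{l.transT}) we may assume $H_1$ carries the embedded torus $T$ used to define $\cH_1$ to a torus isotopic to $T$. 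Thus we may arrange $H_1(T) = T$, and then $H_1$ restricts to a homeomorphism of $N = \overline{M - T}$ isotopic to the identity, carrying the punctured-disk fibration $F_1$ (with fiber $\Sigma_1$, boundary slopes $m_1$ on $T^{out}$ and $m_2$ on $T^{in}$) to $F_2$ (with fiber $\Sigma_2$, boundary slopes $c_1 = p_1 m_1 + q_1 l_1$ and $c_2 = p_2 m_2 + q_2 l_2$).

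Next I would extract the contradiction from the boundary slopes. Since $H_1|_N$ is isotopic to $\mathrm{id}_N$, the induced map on $H_1(N)$ is the identity, so it fixes $[m_1], [m_2], [l_1]$ up to the relations of Lemma~\ref{l.homologyN}. But $H_1|_N$ sends the fiber boundary $\partial\Sigma_1$ to $\partial\Sigma_2$, hence must send the slope $m_1$ on $T^{out}$ to the slope $c_1$; in homology this forces $[m_1] = [c_1]$ in $H_1(N)$, i.e. $n[m_2] = p_1 n [m_2] + q_1 [l_1]$, whence $q_1 = 0$ and $p_1 = 1$. By Lemma~\ref{l.homologyF2}, however, $q_1 = \frac{n^2-1}{[n^2-1,k]}$, which is strictly positive because $n \geq 2$ forces $n^2 - 1 \geq 3 > 0$. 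This contradiction shows no such $H_1$ exists, provided we can genuinely reduce to the case $H_1(T) = T$ with $H_1$ then matching $F_1$ to $F_2$ as fibrations (not merely matching leaves).

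The main obstacle is precisely that last reduction: a leaf-conjugacy only matches leaves of $\cH_1$ to leaves of $\cH_2$, and a priori the image $H_1(T)$ is merely \emph{some} incompressible torus transverse to $\cH_2$, which by Lemma~\ref{l.transT}(2) can be isotoped through leaf-conjugacies to a JSJ torus, and by Corollary~\ref{c.incomp} the JSJ tori of $M$ are all isotopic to $T$ — but one must check that the composite of these isotopies stays within the isotopic-leaf-conjugacy equivalence class and that, after the dust settles, the open leaf (the non-compact leaves of $\cH_i$, which restrict on $N$ to the fibers of $F_i$) of $\cH_1$ really goes to the fiber surface of $F_2$ up to isotopy, so that the boundary-slope comparison is legitimate. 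I would handle this by invoking Lemma~\ref{l.transT} to normalize both $\cH_1$ and $\cH_2$ to be transverse to the \emph{same} JSJ system, then noting that on $N$ the foliation $\cH_i$ restricts to the punctured-disk fibration $F_i$ and that a leaf-conjugacy of $M$ isotopic to the identity, after being made to preserve $T$, restricts to a homeomorphism of $N$ isotopic to the identity taking the fibration $F_1$ to $F_2$ (a fibered face is determined by its fiber class, and leaves of $\cH_i$ in the interior of $N$ are exactly the leaves of $F_i$). Once the reduction is in place, the homology computation above — a single line — finishes the proof.
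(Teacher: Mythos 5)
Your homological endgame is the right target (it is essentially the same contradiction the paper extracts from Lemma \ref{l.homologyF2}: a slope with $q_1=\frac{n^2-1}{[n^2-1,k]}\neq 0$ cannot be matched to $m_1$), but the reduction you rest it on is a genuine gap, and it is exactly the hard part of the proposition. You need the leaf-conjugacy $h\simeq \mathrm{id}_M$ to be arranged so that $h(T)=T$ and, crucially, so that $h|_N$ is isotopic to $\mathrm{id}_N$ (or at least acts trivially on $H_1(N)$ and on the boundary slopes of $T^{out}$). Neither Lemma \ref{l.transT} nor Corollary \ref{c.incomp} gives this: Corollary \ref{c.incomp} only tells you $h(T)$ is isotopic to $T$, and after composing with an ambient isotopy to get $h(T)=T$ you know nothing about the isotopy class of $h|_N$ \emph{in} $N$. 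In particular, ``$h$ acts trivially on $H_1(M)$'' does not imply ``$h|_N$ acts trivially on $H_1(N)$'': the map $H_1(N)\to H_1(M)$ has a large kernel (by Mayer--Vietoris the classes $(n-1)[m_2]$ and $(n-1)[l_1]+k[m_2]$ die in $H_1(M)$, which is why $H_1(M)$ has torsion), so triviality downstairs puts no effective constraint on the classes $[m_1],[l_1]$ upstairs, and your key line ``in homology this forces $[m_1]=[c_1]$ in $H_1(N)$'' is unsupported. Your own parenthetical remedy (``a fibered face is determined by its fiber class'') does not address this, since the issue is precisely that the fiber/slope classes are only controlled in $M$, not in $N$.

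The paper's proof is designed to avoid this reduction altogether. It never arranges $h(T)=T$: instead it takes the curve $b_1=Q(m_1)$ on a leaf of $\cH_1$, notes that $h(b_1)$ is freely homotopic to $b_1$ in $M$ (this is all that $h\simeq\mathrm{id}_M$ gives you) and is homotopic in its $\cH_2$-leaf to a multiple of $c=Q(c_1)=Q(c_2)$, and then analyzes an annulus realizing this free homotopy, made minimal with respect to its intersection with $T$ (using incompressibility of $T$ and irreducibility of $M$). Cutting the annulus along $T$ produces a chain of curves alternately on $T^{out}$ and $T^{in}$ whose classes in $H_1(N)$ can be computed step by step, and in both combinatorial cases the chain is incompatible with Lemma \ref{l.homologyN} and Lemma \ref{l.homologyF2}. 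That annulus argument is precisely the mechanism that transports the weak hypothesis ``freely homotopic in $M$'' into the group $H_1(N)$ where your computation lives; without it (or some substitute of comparable strength, e.g. a proof that any self-homeomorphism of $M$ isotopic to the identity and preserving $T$ restricts to $N$ with trivial action on boundary slopes), your argument does not go through.
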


\begin{proof}
Otherwise, we
assume that there exists a homeomorphism $h:M\to M$ which maps every leaf of $\cH_1$ to a leaf of $\cH_2$ and
is isotopic to the identity map on $M$.
One can check that every leaf on $\cH_1$ is homeomorphic to either a sphere minus a Cantor set or a torus minus a Cantor set.
We choose a leaf $\ell_1$ on $\cH_1$ which is homeomorphic to a sphere minus a Cantor set.
We denote $f(\ell_1)$ by $\ell_2$ which is a leaf on $\cH_2$.

Let $Q: N\to M$ be the natural quotient map.
 By the construction of $\cH_1$, without loss of generality, we can assume that $b_1 = Q(m_1)=Q(m_2)$  is an oriented simple closed curve on $\ell_1$.
  $b_1$ is homotopically nontrivial in
$M$ because of the compressibility of $T$ in $M$.
Since $h$ is isotopic to the identity map on $M$, $b_2=h(b_1) \subset \ell_2$ is also homotopically nontrivial  in $M$.
By the construction of $\cH_2$, $b_2$ is homotopic to $\lambda c$ in $\ell_2$ for some  nonzero integer $\lambda$.
Here $c=Q(c_1)=Q(c_2)$ is the simple closed curve on $T$.
We choose an oriented closed curve $c_{\lambda}$ in $T$ which is homotopic to $\lambda c$ in $T$.
Then $b_1$ and $c_{\lambda}$ are homotopic in $M$.
This means that there exists an immersion map $F: A=S^1 \times [0,1] \to M$ and an orientation on $A$ so that,
\begin{itemize}
  \item $F(S^1 \times \{0\})=l_1$ and $F(S^1 \times \{1\})=c_{\lambda}$ where $S^1 \times \{0\}$ and $S^1 \times \{1\}$ are oriented which are coherent to the orientation of $A$;
  \item $F(\emph{int}(A))$ is transverse to $T$ where $\emph{int}(A)$ is the interior of $A$.
\end{itemize}
Moreover, under some pertubation of $F$ close to $\partial A$ if it is necessary, we can assume that
there exists a neighborhood of $\partial A$, denoted by $N(A)$, satisfies  that $F^{-1}(T)\cap N(\partial A)= \partial A$.
Then, $F^{-1}(T)\cap \emph{int}(A)$ is the union of finitely many, pairwise disjoint oriented simple closed curves
$s_0, s_1 \dots, s_m$ where $s_0 =S^1 \times \{0\}$ and $s_m = S^1 \times \{1\}$.
Here the orientation of $s_i$ ($i\in \{0,1, \dots,m\}$) is coherent to the orientation of $s_0$ in $A$.
We can assume that $m$ is minimal in the following sense:
let $F: A\to M$ be an immersion which satisfies the conditions above,
then   $F^{-1}(T)\cap \emph{int}(A)$ contains at least $m$ connected components.
If some  $s_i$ is inessential in $A$, then $s_i$ bounds a disk $D_i$ in $A$. This means that
$F(s_i)$ is homotopically trivial in $M$. Since $F(s_i)\subset T$ and $T$ is incompressible in $M$,
$F(s_i)$ is homotopically trivial in $T$. Then by some standard surgery, one can build another
$F': A\to M$ which satisfies the conditions above and whose intersection circle number is less than $m$. It contradicts  the assumption for
$m$. Therefore, from now on, we can suppose that each $s_i$ is essential in $A$.
Since $A$ is an annulus, $s_0, s_1 \dots, s_m$  are pairwise parallel in $A$.

Without loss of generality, we can assume that  the union of $s_0, s_1 \dots, s_m$ cuts
$A$ to $m$ open annuli $A_1, \dots, A_m$ so that,
\begin{itemize}
  \item $\partial A_i = s_{i-1}\cup s_i$ ($i\in\{1,\dots,m\}$);
  \item $A_i \cap F^{-1}(T)=\emptyset$.
\end{itemize}
Therefore, $F(s_{i-1})$ and $F(s_i)$ are homotopic in $N$ for every  $i\in\{1,\dots,m\}$.
We choose a very small tubular neighborhood of $s_0$ in $A$. Then $F(N(s_0))$ belongs to
one of the two sides of $T$ in $M$. The two cases for the position of $F(N(s_0))$  and the
relations above
induce two  kinds of ``homotopy chain relation". We denote $Q^{-1} (c_{\lambda})$ by
$c_{\lambda}^1 \cup c_{\lambda}^2$ where $c_{\lambda}^1 \subset T^{in}$ and
$c_{\lambda}^2 \subset T^{out}$.
In both cases, we can assume that there exist $2m$ oriented  closed curves $s_1^1, s_2^1, \dots, s_m^1$ in $T^{out}$ and
$s_0^2, s_1^2, \dots, s_{m-1}^2$ in $T^{in}$ so that:
\begin{itemize}
  \item $Q(s_i^1)=Q(s_i^2)=F(s_i)$ and $s_i^2 = \varphi(s_i^1)$ ($i\in \{1,\dots,m-1\}$);
  \item $s_{i-1}^2$ and $s_i^1$ are homotopic in $N$ ($i\in \{1,\dots,m\}$).
\end{itemize}
In  one case, $s_0^2 =c_{\lambda}^2$ and $s_m^1=m_1$;
in the other case,  $s_0^2 =m_2$ and $s_m^1=c_{\lambda}^1$.

We will get contradictions in both cases by using homology theory. For every oriented closed curve
$\alpha$ in $N$,
we will use $[\alpha]$ to represent the corresponding homological element in $H_1(N)$.
Recall that
$H_1 (N)\cong \mathbb{Z}\oplus \mathbb{Z}$ which is generated by $[l_1]$ and $[m_2]$ (Lemma \ref{l.homologyN}).
Moreover, $[l_2]=n[l_1]$ and $[m_1]=n[m_2]$.  These facts will be used several times in the following.

In the first case,
on the one hand, since $s_0^2=m_2$ and $s_1^1$ are homotopic in $N$, $[m_2]=[s_1^1]$ in $H_1 (N)$;
on the other hand, since $s_1^1$ is an oriented closed curve in $T^{out}$, then $[s_1^1]=r[m_1]+t[l_1]$ for two integers $r$
and $t$.
These two sides imply that $[s_1^1]=nr[m_2]+t[l_1]=[m_2]$  in $H_1 (N)$.
Notice that $n>1$, then the equality is impossible. Therefore, we obtain a contradiction.

Now we discuss the second case.
Since $s_{i-1}^2$ and $s_i^1$ are homotopic in $N$ ($i\in \{1,\dots,m\}$),
$[s_{i-1}^2]=[s_i^1]$.
In particular, $[s_{m-1}^2]=[s_m^1]=[m_1]$.
Since $s_{m-1}^2$ is an oriented closed curve in $T^{in}$, then $[s_{m-1}^2]=r_{m-1}[m_2]+t_{m-1}[l_2]$ for two integers $r_{m-1}$
and $t_{m-1}$. We have $[s_{m-1}^2]=r_{m-1}[m_2]+n t_{m-1}[l_1]=n[m_2]$. Therefore, $r_{m-1}=n$ and $t_{m-1}=0$.
This implies that $s_{m-1}^2$ and $n m_2$  are homotopic in $T^{in}$. Notice that $s_{m-1}^1=\varphi^{-1}(s_{m-1}^2)$
and $\varphi(m_1)=m_2$, then $s_{m-1}^1$ and $nm_1$ are homotopic in $T^{out}$.
By some similar arguments, we have that $s_i^1$ and $n^{m-i} m_1$ are homotopic in $N$ for every $i\in \{1,\dots, m-1\}$.
Also notice that $s_1^1$, $s_0^2$ and $c_{\lambda}$ are pairwise homotopic, then $n^{m-1} m_1$ and $c_{\lambda}$
are homotopic in $N$. This implies that $n^{m-1} [m_1]=[c_{\lambda}] =n^{m-1+1} [m_2]=n^m [m_2]$.
By Lemma \ref{l.homologyF2}, $[c_{\lambda}]=\lambda [c_2]=\lambda (n^2 \frac{k}{[n^2 -1, k]} [m_2]+
\frac{n^2 -1}{[n^2 -1, k]}[l_2])$.  Since $\frac{n^2 -1}{[n^2 -1, k]}[l_2]= \frac{n(n^2 -1)}{[n^2 -1, k]}[l_1]$ is nonzero,
then $[c_{\lambda}] \neq n^m [m_2]$. We obtain a contradiction.

Then, the proposition is followed.

\end{proof}

\begin{remark}\label{r.HirvsaffHir}

By Definition \ref{d.hirsch} and Definition \ref{d.affinehirsch}, we can see that
for a given $3$-manifold $M$,
 \begin{itemize}
   \item on the one hand, every Hirsch foliation can be obtained from a unique affine Hirsch foliation by replacing
the projective holonomy map $\varphi_2=z^n$ on $S^1$ by another degree $n$ endomorphism
  $\varphi_2'$ on $S^1$;
   \item on the other hand, for every affine Hirsch foliation and every degree $n$ endomorphism   $\varphi_2'$ on $S^1$,
   one can build a Hirsch foliation with the projective holonomy map $\varphi_2'$.
 \end{itemize}

Moreover, by Proposition \ref{p.nonisotopy} and Theorem \ref{t.maint}, one can classify all of the affine Hirsch foliations
on a given $3$-manifold $M$.

Therefore, our results conclude the question classifying all Hirsch foliations to a  classical field on one dimensional dynamical system:
classifying degree $n$ ($n\geq 2$) endomorphisms on $S^1$ up to conjugacy.
\end{remark}

\subsection{DEBL Hirsch manifolds}
\label{sb.His}

To be more convenient to understand the materials in this subsection, we suggest the reader to look at Figure \ref{f.DEBLHirsch}.

 \begin{figure}[htp]
\begin{center}
  \includegraphics[totalheight=8.35cm]{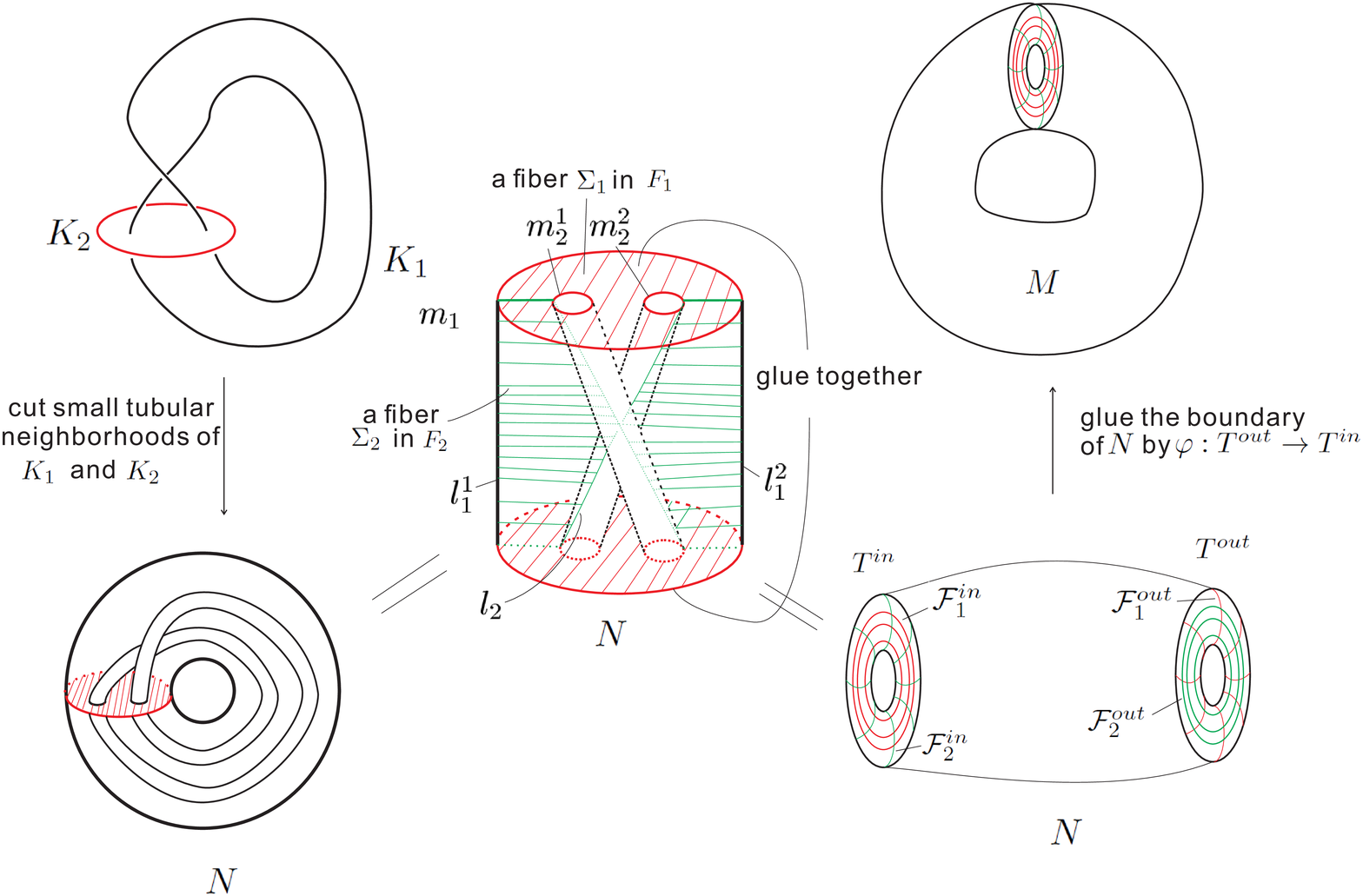}
  \caption{An example of DEBL Hirsch manifolds}\label{f.DEBLHirsch}
  \end{center}
\end{figure}

Let $L= K_1 \cup K_2$ be an exchangeably braided link in $S^3$.
We choose two disjoint small open tubular neighborhoods $V_1$ and $V_2$ of $K_1$ and $K_2$ respectively.
$N$ is defined to be $S^3 - V_1 \cup V_2$.
$\partial N=T^{out} \cup T^{in}$ so that $T^{out}= \partial \overline{V_1}$ and
                                          $T^{in}= \partial \overline{V_2}$.
The linking number of $K_1$ and $K_2$ is defined to be $n$.
$K_1$ is a closed $n$-braid $\widetilde{b_1}$ relative to $K_2$
and $K_2$ is a closed $n$-braid  $\widetilde{b_2}$ relative to $K_1$.

Up to isotopy, there is a unique way to choose a simple closed curve $m_1$ in $T^{out}$ and
$n$ simple closed curves $m_2^1, \dots, m_2^n$ in $T^{in}$ so that,
\begin{itemize}
  \item $m_2^i$ ($i=1,\dots, n$) bounds a disk in $\overline{V_2}$ and $m_1$ is isotopic to $K_1$ in $\overline{V_1}$;
  \item $m_2^1, \dots, m_2^n$ and $m_1$ bound an $n$-punctured disk $\Sigma_1$ in $N$.
\end{itemize}

Similarly,
up to isotopy, there is a unique way to choose a simple closed curve $l_2$ in $T^{in}$ and
$n$ simple closed curves $l_1^1, \dots, l_1^n$ in $T^{out}$ so that,
\begin{itemize}
  \item $l_1^i$ ($i=1,\dots,n$) bounds a disk in $\overline{V_1}$ and $l_2$ is isotopic to $K_2$ in $\overline{V_2}$;
  \item $l_1^1, \dots, l_1^n$ and $l_2$ bound an $n$-punctured disk $\Sigma_2$ in $N$.
\end{itemize}

Moreover, up to isotopy, we can extend $\Sigma_i$ ($i=1,2$) to an $n$-punctured disk fibration $F_i$ on $N$
so that,
\begin{itemize}
  \item $\cF_i^{out}=F_i \cap T^{out}$ and $\cF_i^{in}=F_i \cap T^{in}$ which are two $S^1$-fibrations on $T^{out}$
and $T^{in}$;
  \item $\cF_1^{out}$ and $\cF_2^{out}$ transversely intersects every where on $T^{out}$;
  \item $\cF_1^{in}$ and $\cF_2^{in}$ transversely intersects every where on $T^{in}$.
\end{itemize}

We can suppose that the intersection number of $m_1$ and $l_1^i$ is $1$ and
the intersection number of $m_2^i$ and $l_2$ (for every $i\in \{1,\dots, n\}$) is $1$.
Similar to the beginning of Section \ref{s.prel} and Section \ref{s.proof},
we would like to provide some orientations on these objects:
\begin{itemize}
  \item give $N$ an orientation;
  \item $T^{in}$ and $T^{out}$ are oriented by the  orientation of $N$;
  \item give the leaves of $F_1$ and $F_2$ some orientations continuously so that:
  \begin{enumerate}
   \item every fiber of $\cF_i^{out}$ and $\cF_i^{in}$ ($i=1,2$) is oriented which is induced
  by the orientation of $F_1$ and $F_2$;
  \item the orientation of $T^{out}$ is coherent to $(m_1, l_1^1)$ and
       the orientation of $T^{in}$ is coherent to $(m_2^1, l_2)$.
\end{enumerate}
\end{itemize}

Then, one can build an orientation preserving homeomorphism
$\varphi: T^{out} \to T^{in}$ so that,
\begin{itemize}
  \item $\varphi(m_1)= m_2^1$ and $\varphi(l_1^1)=l_2$;
  \item $\varphi$ maps every fiber of $\cF_1^{out}$ to a fiber of $\cF_1^{in}$;
  \item $\varphi$ maps every fiber of $\cF_2^{out}$ to a fiber of $\cF_2^{in}$;
  \item $F_1$ and $F_2$ induce two affine Hirsch foliations $\cH_1$ and $\cH_2$ on $M$ under $\varphi$
       where $M= N\setminus x\sim \varphi(x), x \in T^{out}$.
\end{itemize}

Notice that to ensure the glued manifold $M$ can admit two Hirsch foliations induced by $F_1$ and $F_2$,
up to isotopy, we can suppose that $\varphi(m_1)= m_2^1$ and $\varphi(l_1^1)=l_2$. This implies that under this restriction,
the glued manifold $M$ is unique up to homeomorphism. Therefore, we can say that
an exchangeably braided link  decide a unique Hirsch manifold.
Every Hirsch manifold built in this way is called a \emph{Hirsch manifold derived from an exchangeably braided link}
(abbreviated as a \emph{DEBL Hirsch manifold}).

By the second item of Corollary \ref{c.finiteness}, we have the following consequence.

\begin{corollary}\label{c.finiDEBL}
For every $n\in \mathbb{N}$, there are only finitely many DEBL Hirsch manifolds with strand number $n$.
\end{corollary}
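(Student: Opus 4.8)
The plan is to show that the construction of Subsection \ref{sb.His} produces a surjection from isotopy classes of exchangeably braided links onto homeomorphism classes of DEBL Hirsch manifolds, compatible with the relevant numerical invariants, and then quote the finiteness statement for links (item $2$ of Corollary \ref{c.finiteness}). Since a surjective image of a finite set is finite, this will immediately give the corollary. Concretely, I would fix $n\in\mathbb{N}$ and argue that every DEBL Hirsch manifold with strand number $n$ arises from an exchangeably braided link of linking number $n$, of which there are only finitely many up to isotopy.

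First I would recall that, by definition, every DEBL Hirsch manifold $M$ is built from some exchangeably braided link $L=K_1\cup K_2$ by the procedure of Subsection \ref{sb.His}: choosing disjoint tubular neighbourhoods $V_1,V_2$, setting $N=S^3-V_1\cup V_2$, selecting the distinguished curves $m_1,m_2^1,\dots,m_2^n$, $l_2,l_1^1,\dots,l_1^n$, extending the spanning $n$-punctured disks $\Sigma_1,\Sigma_2$ to the fibrations $F_1,F_2$, fixing orientations, and gluing by the homeomorphism $\varphi$ with $\varphi(m_1)=m_2^1$, $\varphi(l_1^1)=l_2$. As already observed in the text, under this normalisation the glued manifold $M$ is determined by $L$ uniquely up to homeomorphism. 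Hence $L\mapsto M$ is a well-defined map, and by the very definition of DEBL Hirsch manifold it is surjective onto the class of all such manifolds.

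Next I would match the invariants. If $L$ has linking number $n$, then $\Sigma_1$ is an $n$-punctured disk, so $F_1$ is an $n$-punctured disk fibration on $N$ and induces an affine Hirsch foliation $\cH_1$ on $M$ whose fiber has exactly $n$ punctures; by Definition \ref{d.strdnumberF} the strand number of $\cH_1$ is $n$, and $\cH_1$ is built from an $n$-strand braid, so by Definition \ref{d.strandnumber} the strand number of $M$ is $n$. Therefore the restriction of the map $L\mapsto M$ to exchangeably braided links of linking number $n$ surjects onto the set of DEBL Hirsch manifolds of strand number $n$. By item $2$ of Corollary \ref{c.finiteness} the domain of this restricted map is finite, so its image is finite, which is exactly the assertion of the corollary.

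There is essentially no serious obstacle; the only point deserving a word of care is that $L\mapsto M$ should be checked to be genuinely well defined on \emph{isotopy} classes of links, i.e. that isotopic exchangeably braided links yield homeomorphic DEBL Hirsch manifolds. This follows because an ambient isotopy of $S^3$ carrying one link to the other transports the entire (essentially canonical) construction — tubular neighbourhoods, distinguished curves, $n$-punctured disk fibrations, orientations, and gluing map — so the resulting closed $3$-manifolds are homeomorphic. I would state this compatibility explicitly and then conclude as above.
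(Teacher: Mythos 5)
Your argument is exactly the paper's: the corollary is deduced by noting that every DEBL Hirsch manifold with strand number $n$ arises from an exchangeably braided link of linking number $n$, which determines the manifold uniquely up to homeomorphism, and then invoking item $2$ of Corollary \ref{c.finiteness}. The proposal is correct and follows the same route, just spelled out in more detail.
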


\subsection{A virtual property of Hirsch manifolds}
\label{s.virp}


Let $M$ be a Hirsch manifold. By the definition of Hirsch manifold and Lemma \ref{l.transT}, there exist two affine Hirsch foliations $\cH_1$ and $\cH_2$ on $M$ and a JSJ torus $T$ in $M$
which satisfy the propositions in Lemma \ref{l.transT}. Let $N$ be the path closure of $M-T$, then
\begin{itemize}
  \item $N$ admits two $n$-punctured disk fibrations
   $F_1$ and $F_2$ and we can parameterize some subsets of $N$ as Section \ref{s.prel};
  \item the two Hirsch foliations $\cH_1$ and $\cH_2$ can be induced by $F_1$ and $F_2$ and a gluing map $\varphi: T^{out}\to T^{in}$ respectively.
\end{itemize}

$H_1 (N)\cong \mathbb{Z}\oplus \mathbb{Z}$ which is generated by $[m_2]$ and $[l_1]$.
We denote the abelization homomorphism from $\pi_1 (N)$ to $H_1 (N)$ by  $\psi_1$ and denote the quotient
homomorphism from $H_1 (N)$ to $\mathbb{Z}_{q_2}$ so that $\psi_2 ([m_2])=0$ and $\psi_2 ([l_1])=1$ by $\psi_2$.
The kernel of $\psi_N= \psi_2 \circ \psi_1 : \pi_1 (N) \to \mathbb{Z}_{q_2}$, $G= \ker (\psi_N) $,  is a normal subgroup of
$\pi_1 (N)$. Here, the definitions and properties of $m_i$, $l_i$ ($i=1,2$) and $q_2$
can be found in the beginning of Section \ref{s.twohirsch} and Lemma \ref{l.homologyF2}.

As a subgroup of $\pi_1(N)$,
$G$ induces an $q_2$-covering space of $N$ by a covering map $P: \widetilde{N} \to N$.
We collect some useful properties as the following. One can prove the proposition by some routine checks.
We omit the details here.

\begin{proposition} \label{p.P}
($i=1,2$)
    \begin{enumerate}
      \item $P^{-1} (F_i)=\widetilde{F_i}$  is an $n$-punctured disk fibration on $\widetilde{N}$;
      \item Let $\widetilde{\Sigma_1}$ be a connected component  of $P^{-1} (\Sigma_1)$, then
      $P:\widetilde{\Sigma_1} \to \Sigma_1$   is a homeomorphism so that $P(\widetilde{m_i})=m_i$;
      \item $P:\widetilde{l_1} \to l_1$  is a $q_2$-covering map;
      \item Let $\widetilde{\Sigma_2}$ be a connected component  of $P^{-1} (\Sigma_2)$, then
      $P:\widetilde{\Sigma_2} \to \Sigma_2$   is a homeomorphism so that $P(\widetilde{c_i})= c_i$;
      \item $\widetilde{c_i}$ intersects $\widetilde{m_i}$ at one point.
    \end{enumerate}
\end{proposition}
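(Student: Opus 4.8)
The plan is to reduce every item to an elementary homology computation controlled by $\psi_N$, using Lemma \ref{l.homologyN} ($[m_1]=n[m_2]$, $[l_2]=n[l_1]$) and Lemma \ref{l.homologyF2} ($q_1=q_2$, $s=n$, $p_2=n^2p_1$, $q_2=\frac{n^2-1}{[n^2-1,k]}$). The mechanism is: for a closed curve $\gamma\subseteq N$ with $[\gamma]=a[m_2]+b[l_1]$ in $H_1(N)$, the restriction of $P$ over $\gamma$ is governed by $\psi_N([\gamma])=b\bmod q_2$; in particular $\gamma$ lifts to a loop in $\widetilde N$ precisely when $q_2\mid b$, and $P^{-1}(\gamma)$ is a single circle (a connected $q_2$-fold cover of $S^1$) precisely when $b$ is a unit in $\mathbb{Z}_{q_2}$.

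For items (1), (2), (4) I would first check $\pi_1(\Sigma_i)\subseteq G=\ker\psi_N$ for $i=1,2$. As an $n$-punctured disk, $\pi_1(\Sigma_i)$ is generated by the classes of its $n+1$ boundary circles. For $\Sigma_1$ these are $m_1$ and $m_2^1,\dots,m_2^n$, whose $H_1(N)$-classes are all multiples of $[m_2]$, hence $\psi_N$-trivial. For $\Sigma_2$ these are $c_2$ and the parallel copies $c_1^1,\dots,c_1^n$ of $c_1=p_1m_1+q_1l_1$; here $\psi_N([c_1^j])=\pm q_1=\pm q_2\equiv 0$, and $\psi_N([c_2])=0$ too (the $[l_1]$-coordinate of $[c_2]=p_2[m_2]+q_2n[l_1]$ is $q_2n$). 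So $\psi_N(\pi_1(\Sigma_i))=0$, and since $G$ is normal it follows that each of the $q_2$ leaves of $P^{-1}(F_i)$ over $\Sigma_i$ is carried homeomorphically onto $\Sigma_i$ by $P$; this gives (2) and (4) once $\widetilde m_i$ (resp. $\widetilde c_i$) is set to be the lift of $m_i$ (resp. $c_i$) inside the chosen $\widetilde\Sigma_1$ (resp. $\widetilde\Sigma_2$). For (1): writing $P_i\colon N\to S^1$ for the fibration quotient of $F_i$, we have $\ker(P_i)_{\star}=\pi_1(\Sigma_i)\subseteq G$, so $(P_i)_{\star}(G)$ is a subgroup of $\mathbb{Z}$ of index $[\pi_1(N):G]=q_2$; hence $\widetilde N\to N$ is the pullback along $P_i$ of the connected $q_2$-fold cover of $S^1$, so $\widetilde N$ is a bundle over $S^1$ with fiber $\Sigma_i$ (the mapping torus of $b_i^{q_2}$), and its bundle foliation is exactly $P^{-1}(F_i)=\widetilde F_i$, an $n$-punctured disk fibration.

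Item (3) is immediate from the mechanism: $l_1\subseteq T^{out}$ has $\psi_N([l_1])=1$, a unit, so $P^{-1}(l_1)$ is a single circle $\widetilde l_1$ on which $P$ restricts to the connected $q_2$-fold cover of $S^1$. For item (5) I would work on the boundary tori. The curves $m_1,c_1$ lie on $T^{out}$, and since $\psi_N$ is already onto $\mathbb{Z}_{q_2}$ on $\pi_1(T^{out})$, the preimage $P^{-1}(T^{out})$ is a single torus $\widetilde T^{out}$, a $q_2$-fold cover of $T^{out}$ with $\pi_1(\widetilde T^{out})$ the sublattice $\langle [m_1],\,q_2[l_1]\rangle$ of $\pi_1(T^{out})$; in this basis the degree-one lifts $\widetilde m_1$ and $\widetilde c_1$ (whose image is $c_1=p_1m_1+q_1l_1=p_1m_1+q_2l_1$) represent the primitive vectors $(1,0)$ and $(p_1,1)$, of intersection number $\pm1$, so they meet at one point. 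For $i=2$ one first observes $\gcd(n,q_2)=1$ (since $q_2\mid n^2-1$ by Lemma \ref{l.homologyF2} and $\gcd(n,n^2-1)=1$); hence $\psi_N$ is onto $\mathbb{Z}_{q_2}$ on $\pi_1(T^{in})$ (because $[l_2]=n[l_1]\mapsto n$, a unit), $P^{-1}(T^{in})$ is a single torus with $\pi_1$ the sublattice $\langle [m_2],\,q_2[l_2]\rangle$, and the degree-one lifts $\widetilde m_2$ and $\widetilde c_2$ (image $c_2=p_2m_2+q_2l_2$) become $(1,0)$ and $(p_2,1)$ in that basis, again of intersection number $\pm1$.

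All of this is bookkeeping; the two points that deserve care are the structural claim in (1) — that the cover restricts to the identity on each fiber and is cyclic in the base direction, which is exactly what $\pi_1(\Sigma_i)\subseteq G$ provides — and, in (5), the coprimality $\gcd(n,q_2)=1$, without which $P^{-1}(T^{in})$ could split into several tori and the last intersection count would break. Both are consequences of Lemma \ref{l.homologyF2}, specifically of $q_1=q_2$ and $q_2=\frac{n^2-1}{[n^2-1,k]}$.
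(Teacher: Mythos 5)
Your proposal is correct and supplies precisely the ``routine checks'' that the paper omits (it gives no proof of Proposition \ref{p.P}): the covering-space and homology bookkeeping via $\psi_N$ together with Lemmas \ref{l.homologyN} and \ref{l.homologyF2} is exactly the intended route, and your observation that $\gcd(n,q_2)=1$ (since $q_2\mid n^2-1$) is the one genuinely necessary ingredient, as it guarantees $P^{-1}(T^{in})$ is a single torus so that the $i=2$ case of item (5) makes sense. The only cosmetic point is that in item (5) your computation yields algebraic intersection number $\pm 1$; to read off literally one intersection point one should take $m_i$ and $c_i$ (hence their lifts) transverse and in minimal position, which is consistent with the paper's up-to-isotopy conventions.
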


\begin{lemma}\label{l.lifttorus}
There is a homeomorphism $\widetilde{\varphi}: \widetilde{T}^{out} \to \widetilde{T}^{in}$ so that,
\begin{enumerate}
  \item $P\circ \widetilde{\varphi}= \varphi\circ P: \widetilde{T}^{out} \to T^{in}$;
  \item $\widetilde{\varphi}(\widetilde{m_1})= \widetilde{m_2}$ and $\widetilde{\varphi}(\widetilde{c_1})= \widetilde{c_2}$;
\end{enumerate}
\end{lemma}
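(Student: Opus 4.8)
The plan is to realize $\widetilde\varphi$ as a lift of the map $\varphi\circ P\colon\widetilde T^{out}\to T^{in}$ through the covering $P|_{\widetilde T^{in}}\colon\widetilde T^{in}\to T^{in}$, and then to normalize it so that the boundary curves match up. First I would record two facts about the restricted coverings. Since $\psi_N([l_1])=\psi_2([l_1])=1$ generates $\ZZ_{q_2}$, the image of $\pi_1(T^{out})$ under $\psi_N$ is all of $\ZZ_{q_2}$, so $\widetilde T^{out}:=P^{-1}(T^{out})$ is a single torus and $P|_{\widetilde T^{out}}$ is a $q_2$-fold regular cover; using $[m_1]=n[m_2]$ (Lemma~\ref{l.homologyN}) one gets $\psi_N([m_1])=0$, whence $P_\star\pi_1(\widetilde T^{out})=\langle[m_1],\,q_2[l_1]\rangle\subset\pi_1(T^{out})$. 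The key arithmetic input enters on the $T^{in}$ side: $\psi_N([l_2])=n\psi_N([l_1])=n$, and by Lemma~\ref{l.homologyF2} we have $q_2=q_1=(n^2-1)/[n^2-1,k]$, so $q_2\mid n^2-1$, so $n$ is invertible modulo $q_2$; therefore $\psi_N(\pi_1(T^{in}))=\langle n\rangle=\ZZ_{q_2}$, the set $\widetilde T^{in}:=P^{-1}(T^{in})$ is a single torus, $P|_{\widetilde T^{in}}$ is a $q_2$-fold regular cover, and $P_\star\pi_1(\widetilde T^{in})=\ker(\psi_N|_{\pi_1(T^{in})})=\langle[m_2],\,q_2[l_2]\rangle$ (invertibility of $n$ again).

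Next I would check the lifting criterion. Using $\varphi_\star[m_1]=[m_2]$ and $\varphi_\star[l_1]=[l_2]+k[m_2]$ one computes $(\varphi\circ P)_\star\pi_1(\widetilde T^{out})=\varphi_\star\langle[m_1],q_2[l_1]\rangle=\langle[m_2],\,q_2[l_2]+q_2k[m_2]\rangle=\langle[m_2],q_2[l_2]\rangle$, which is exactly $P_\star\pi_1(\widetilde T^{in})$ by the first paragraph. Hence $\varphi\circ P$ admits a lift $\widetilde\varphi\colon\widetilde T^{out}\to\widetilde T^{in}$ with $P\circ\widetilde\varphi=\varphi\circ P$, which is item~(1); moreover $\widetilde\varphi$ is a covering map, and comparing degrees in $P\circ\widetilde\varphi=\varphi\circ P$ (both copies of $P$ have degree $q_2$ and $\varphi$ is a homeomorphism) forces $\deg\widetilde\varphi=1$, so $\widetilde\varphi$ is a homeomorphism. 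Equivalently, I could lift $\varphi^{-1}\circ P$ to some $\widetilde\psi$ and check that $\widetilde\psi\circ\widetilde\varphi$ and $\widetilde\varphi\circ\widetilde\psi$ are the identities by uniqueness of lifts after matching one base point.

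It remains to normalize $\widetilde\varphi$ so that item~(2) holds. The lift is unique up to post-composition with one of the $q_2$ deck transformations of $P|_{\widetilde T^{in}}$, and these act simply transitively on each fibre, hence permute the $q_2$ (pairwise disjoint) lifts of $m_2$, and also the $q_2$ lifts of $c_2$, simply transitively; so there is a \emph{unique} lift $\widetilde\varphi$ with $\widetilde\varphi(\widetilde m_1)=\widetilde m_2$. For this lift I would then verify $\widetilde\varphi(\widetilde c_1)=\widetilde c_2$: after the isotopy used to match the two boundary circle fibrations we may assume $\varphi(m_1)=m_2$ and $\varphi(c_1)=c_2$ on the nose, so $\widetilde\varphi(\widetilde c_1)$ is one of the lifts of $c_2$; choosing $\widetilde\Sigma_1$ and $\widetilde\Sigma_2$ in Proposition~\ref{p.P} to contain a common lift $\widetilde x$ of a point of $m_1\cap c_1$, Proposition~\ref{p.P}(5) makes $\widetilde x$ the unique point of $\widetilde m_1\cap\widetilde c_1$, and the normalized $\widetilde\varphi$ sends it to a point lying both on $\widetilde m_2=\widetilde\varphi(\widetilde m_1)$ and on $\widetilde\varphi(\widetilde c_1)$; identifying that point with $\widetilde m_2\cap\widetilde c_2$ pins down $\widetilde\varphi(\widetilde c_1)=\widetilde c_2$. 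The hard part will be exactly this: making the two normalizations---the one forcing $\widetilde\varphi(\widetilde m_1)=\widetilde m_2$ and the one forcing $\widetilde\varphi(\widetilde c_1)=\widetilde c_2$---simultaneously achievable, which comes down to the $\ZZ_{q_2}$-equivariance of the correspondence ``component of $P^{-1}(\Sigma_i)\mapsto$ its boundary lifts over $T^{out}$ and over $T^{in}$'' together with $\varphi_\star[l_1]\equiv[l_2]\ (\mathrm{mod}\ [m_2])$. Everything else is formal covering-space theory combined with Lemma~\ref{l.homologyN}, Lemma~\ref{l.homologyF2}, and Proposition~\ref{p.P}.
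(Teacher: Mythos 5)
Your covering-space setup and the verification of the lifting criterion are essentially the paper's argument in different coordinates: the subgroups you compute, $\langle[m_1],q_2[l_1]\rangle$ and $\langle[m_2],q_2[l_2]\rangle$, coincide with the subgroups $\langle[m_1],[c_1]\rangle$ and $\langle[m_2],[c_2]\rangle$ that the paper uses (since $[c_i]=p_i[m_i]+q_2[l_i]$), the computation $(\varphi\circ P)_\star\pi_1(\widetilde T^{out})=P_\star\pi_1(\widetilde T^{in})$ is the same, and your degree argument for $\widetilde\varphi$ being a homeomorphism covers what the paper dismisses as a routine check; your observation that $P^{-1}(T^{out})$ and $P^{-1}(T^{in})$ are connected (via $\gcd(n,q_2)=1$, $q_2\mid n^2-1$) is a useful point the paper leaves implicit. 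So item (1) and the homeomorphism property are fine.

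The genuine gap is exactly the step you flag as ``the hard part,'' and the sketch you give does not close it. After normalizing so that $\widetilde\varphi(\widetilde m_1)=\widetilde m_2$, the fact that $\widetilde\varphi(\widetilde x)$ lies on $\widetilde m_2$ gives no information about which lift of $c_2$ the curve $\widetilde\varphi(\widetilde c_1)$ is: each of the $q_2$ lifts of $c_2$ has class $p_2[\widetilde m_2]+[\widetilde L_2]$ in $H_1(\widetilde T^{in})$ (where $\widetilde L_2$ covers $q_2 l_2$) and hence meets $\widetilde m_2$ in exactly one point, so ``identifying that point with $\widetilde m_2\cap\widetilde c_2$'' is precisely what must be proved, not a consequence. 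Nor is the $\ZZ_{q_2}$-equivariance you invoke available: since $\psi_N(\varphi_\star[l_1])=\psi_N([l_2]+k[m_2])=n$, any lift satisfies $\widetilde\varphi\circ\delta=\delta^{\,n}\circ\widetilde\varphi$ for the deck generator $\delta$, and $n\not\equiv 1\pmod{q_2}$ in general; consequently the lift pinned down by $\widetilde m_1\mapsto\widetilde m_2$ and the one pinned down by $\widetilde c_1\mapsto\widetilde c_2$ differ by an element of $\ZZ_{q_2}$ which, under re-choices of $\widetilde\Sigma_1,\widetilde\Sigma_2$, only moves inside a coset of $(n-1)\ZZ_{q_2}$, and $\gcd(n-1,q_2)>1$ is typical (e.g.\ $q_2=n^2-1$ when $k=1$). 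The paper avoids this by normalizing once, at a point rather than at two curves: it takes $\widetilde x_i=\widetilde m_i\cap\widetilde c_i$ (a single point by Proposition~\ref{p.P}), arranges the lifted data so that $\varphi(P(\widetilde x_1))=P(\widetilde x_2)$, checks that both $(\varphi\circ P)_\star\pi_1(\widetilde T^{out},\widetilde x_1)$ and $P_\star\pi_1(\widetilde T^{in},\widetilde x_2)$ equal $\langle[m_2],[c_2]\rangle$, and applies the based lifting theorem; then $\widetilde\varphi(\widetilde m_1)$ and $\widetilde\varphi(\widetilde c_1)$ are the lifts through $\widetilde x_2$ of the loops $m_2=\varphi(m_1)$ and $c_2=\varphi(c_1)$, which are $\widetilde m_2$ and $\widetilde c_2$ by uniqueness of path lifting, so both halves of item (2) come simultaneously. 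To repair your proof, replace your two curve normalizations by this single basepoint normalization (i.e.\ choose $\widetilde\Sigma_2$ compatibly with $\widetilde\Sigma_1$ and $\varphi$ before lifting).
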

\begin{proof}
By Proposition \ref{p.P}, $\widetilde{m_i} \cap \widetilde{c_i}$ ($i=1,2$) is one point, which can be defined by $\widetilde{x_i}$.
We denote $P(\widetilde{x_i})$ by $x_i$, then $\varphi(x_1)=x_2$.
$(\varphi \circ P)_{\star} (\pi_1 (\widetilde{T}^{out}, \widetilde{x_1}))
= \langle\varphi_{\star}([m_1]), \varphi_{\star}([c_1]) \rangle
= \langle [m_2], [c_2] \rangle \lhd \pi_1 (T^{in},x_2)$

$P_{\star} (\pi_1 (\widetilde{T}^{in}, \widetilde{x_2}))
=\langle [m_2], [c_2] \rangle \lhd \pi_1 (T^{in},x_2)$

Then by the classical homotopy lifting theorem, we can build a unique map $\widetilde{\varphi}: \widetilde{T}^{out} \to \widetilde{T}^{in}$
so that,
\begin{enumerate}
  \item $\widetilde{\varphi} (\widetilde{x_1}) =\widetilde{x_2}$, $\widetilde{\varphi}(\widetilde{m_1})= \widetilde{m_2}$;
  \item $P\circ \widetilde{\varphi}= \varphi\circ P: \widetilde{T}^{out} \to T^{in}$.
\end{enumerate}
Now one can automatically check that $\widetilde{\varphi}$ is a homeomorphism.
\end{proof}

We denote $\widetilde{N}\setminus y\sim \widetilde{\varphi}(y)$ ($y \in \widetilde{T}^{out}$) by $\widetilde{M}$.
$Q: \widetilde{N} \to \widetilde{M}$ and $q: N \to M$ are defined to be the corresponding quotient maps.

\begin{lemma}\label{l.liftM}
There is a unique map $\pi: \widetilde{M} \to M$ so that $\pi\circ Q(y)= q\circ P (y)$ for every $y \in \widetilde{N}$.
Furthermore, $\pi: \widetilde{M} \to M$ is a $q_2$-covering map.
\end{lemma}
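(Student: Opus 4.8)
The plan is to construct $\pi$ by the universal property of quotients and then verify the covering property locally, distinguishing points in the interior of $\widetilde N$ from points on the glued torus $\widetilde T = Q(\widetilde T^{out})$.

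First I would define $\pi$ on $\widetilde M$. The map $q\circ P\colon \widetilde N \to M$ is continuous; to see it descends through the quotient $Q\colon \widetilde N \to \widetilde M$, I must check that $q\circ P$ is constant on $Q$-fibers, i.e. that $q\circ P(y) = q\circ P(\widetilde\varphi(y))$ for every $y\in \widetilde T^{out}$. But $q\circ P(\widetilde\varphi(y)) = q\circ \varphi\circ P(y)$ by item (1) of Lemma \ref{l.lifttorus}, and $q\circ\varphi = q$ on $T^{out}$ by the definition of $M = N\setminus x\sim\varphi(x)$. Hence $q\circ P$ factors as $\pi\circ Q$ for a unique continuous map $\pi\colon\widetilde M\to M$; uniqueness is immediate since $Q$ is surjective.

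Next I would prove $\pi$ is a $q_2$-covering. Away from $\widetilde T$, the map $Q$ restricts to a homeomorphism onto $\widetilde M \setminus \widetilde T$, $q$ restricts to a homeomorphism onto $M\setminus T$, and $P\colon \widetilde N\setminus\widetilde T^{out}\cup\widetilde T^{in} \to N\setminus T^{out}\cup T^{in}$ is the restriction of the $q_2$-covering $P$, so $\pi$ is a $q_2$-covering over $M\setminus T$. The work is at a point $z\in T$. Pick a small product neighborhood $U\cong T\times(-\varepsilon,\varepsilon)$ of $T$ in $M$ obtained by gluing a collar of $T^{out}$ in $N$ to a collar of $T^{in}$ in $N$ via $\varphi$; its preimage $q^{-1}(U)$ in $N$ is the disjoint union of a collar of $T^{out}$ and a collar of $T^{in}$. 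Because $P$ is a covering, $P^{-1}$ of each of these collars is a disjoint union of sheets, each mapping homeomorphically (collars deformation retract to tori, and $P$ over $T^{out}$, $T^{in}$ is a $q_2$-covering of tori). Re-gluing via $\widetilde\varphi$ — which covers $\varphi$ and permutes the sheets compatibly by Lemma \ref{l.lifttorus} — shows $\pi^{-1}(U)$ is a disjoint union of sets each mapped homeomorphically onto $U$ by $\pi$, and a sheet-count (the $q_2$-fold cover $P$ over $T^{out}\sqcup T^{in}$ glues up consistently, matching $\widetilde M$'s single glued torus $\widetilde T$) gives exactly $q_2$ sheets. Thus $\pi$ is evenly covered over $U$, completing the proof.

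The main obstacle is the bookkeeping at the glued torus: one must check that the sheets of $P$ over the collar of $T^{out}$ are matched up, via $\widetilde\varphi$, with the sheets over the collar of $T^{in}$ in a way that produces connected evenly-covered neighborhoods in $\widetilde M$ and yields the uniform sheet number $q_2$ rather than something larger. This is exactly where item (1) of Lemma \ref{l.lifttorus} ($P\circ\widetilde\varphi = \varphi\circ P$) is used, and one should note that since $\widetilde M$ is constructed by gluing via the \emph{single} lift $\widetilde\varphi$ (not by taking a disjoint union of copies), the covering stays connected and the count comes out to $q_2$; everything else is the routine verification that a map between quotient spaces, locally modeled on a covering before and after gluing, is itself a covering.
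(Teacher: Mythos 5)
Your construction and uniqueness of $\pi$ are exactly right and are the same as the paper's: $q\circ P$ is constant on $Q$-fibers because $P\circ\widetilde{\varphi}=\varphi\circ P$ (item (1) of Lemma \ref{l.lifttorus}) and $q\circ\varphi=q$ on $T^{out}$, so $\pi$ exists and is unique since $Q$ is surjective. The problem is in your verification of even covering along the glued torus. You assert that the $P$-preimage of a collar of $T^{out}$ (and of $T^{in}$) is a disjoint union of $q_2$ sheets, each mapping homeomorphically, and hence that $\pi^{-1}(U)$ is a disjoint union of homeomorphic copies of the full collar $U\cong T\times(-\varepsilon,\varepsilon)$. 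This is false when $q_2\geq 2$: the covering $P$ is defined by the homomorphism $\psi_N$ with $\psi_N([l_1])=1$, and $l_1\subset T^{out}$, so $\psi_N$ is already surjective on the image of $\pi_1(T^{out})$; consequently $P^{-1}(T^{out})=\widetilde{T}^{out}$ is a \emph{single} torus covering $T^{out}$ with degree $q_2$ (compare Proposition \ref{p.P}, items (2) and (3): $\widetilde{m}_1\to m_1$ is a homeomorphism while $\widetilde{l}_1\to l_1$ has degree $q_2$), and the same holds over $T^{in}$. Hence $\pi^{-1}(U)$ is a connected neighborhood of the single torus $\widetilde{T}$ and is not a disjoint union of $q_2$ copies of $U$; a collar of the whole torus $T$ is not an evenly covered neighborhood, so the sheet bookkeeping (``$\widetilde{\varphi}$ permutes the sheets'') does not make sense as stated.

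The repair is to localize: for $z\in T$ choose a small disk $D\subset T$ containing $z$ and take $U\cong D\times(-\varepsilon,\varepsilon)$, so that $q^{-1}(U)$ is the disjoint union of a half-collar along $D^{out}\subset T^{out}$ and a half-collar along $D^{in}\subset T^{in}$. Since $D$ is simply connected, $P$ trivializes over each half-collar into exactly $q_2$ sheets, and the relation $P\circ\widetilde{\varphi}=\varphi\circ P$ matches each sheet over the $T^{out}$-side with exactly one sheet over the $T^{in}$-side; gluing these pairs produces $q_2$ disjoint open sets in $\widetilde{M}$, each mapped homeomorphically onto $U$ by $\pi$. With this modification your argument is complete (and more detailed than the paper's, which simply defines $\pi$ as you do, invokes Lemma \ref{l.lifttorus} for well-definedness, and asserts that the covering property is inherited from $P$); your verification away from $T$ is fine as written.
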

\begin{proof}
For every $\widetilde{x}\in \widetilde{M}$, $\pi(\widetilde{x})$ can be defined as follows.
Since $Q:\widetilde{N}\to \widetilde{M}$ is surjective, there exists $\widetilde{y}\in \widetilde{N}$
so that $\widetilde{x} = Q (\widetilde{y})$.
Define $\pi (\widetilde{x})= q\circ P (\widetilde{y})$.
The first item of  Lemma \ref{l.lifttorus}
ensures that $\pi$ is well defined.
The uniqueness of $\pi$ is because of the fact that $\pi$ has no freedom in $\widetilde{M}- \widetilde{T}$ where $\widetilde{T}=\pi^{-1} (T)$.

Finally, since $P: \widetilde{N} \to N$ is a $q_2$-covering map, $\pi: \widetilde{M} \to M$ is also a $q_2$-covering map.
\end{proof}

\begin{lemma}\label{l.liftfoliation}
$\widetilde{M}$ is a Hirsch manifold which admits two affine Hirsch foliations $\widetilde{\cH_1}$ and
$\widetilde{\cH_2}$ so that $\widetilde{\cH_i}$ ($i=1,2$) is induced by $\cH_i$ under $\pi$, \emph{i.e.} $\pi$ maps each leaf of $\widetilde{\cH_i}$  to a leaf of $\cH_i$.
\end{lemma}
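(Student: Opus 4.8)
The plan is to transport the two $n$-punctured disk fibrations on $\widetilde N$ through the gluing map $\widetilde\varphi$, exactly mirroring how $\cH_1$ and $\cH_2$ were originally built on $M$ from $F_1$, $F_2$ and $\varphi$. By Proposition \ref{p.P}, $\widetilde F_i = P^{-1}(F_i)$ is an $n$-punctured disk fibration on $\widetilde N$, and since the generic leaf $\widetilde\Sigma_i$ maps homeomorphically onto $\Sigma_i$, each leaf of $\widetilde F_i$ is transverse to $\partial\widetilde N$ and still an $n$-punctured disk. The circle fibration $\widetilde F_i$ restricts to $S^1$-fibrations $\widetilde{\cF_i^{out}}$ on $\widetilde T^{out}$ and $\widetilde{\cF_i^{in}}$ on $\widetilde T^{in}$, with fiber classes $\widetilde m_1,\widetilde c_1$ and $\widetilde m_2,\widetilde c_2$ respectively. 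First I would verify, using part (2) of Lemma \ref{l.lifttorus}, that $\widetilde\varphi$ carries $\widetilde{\cF_1^{out}}$ to $\widetilde{\cF_1^{in}}$ and $\widetilde{\cF_2^{out}}$ to $\widetilde{\cF_2^{in}}$: indeed $\widetilde\varphi(\widetilde m_1)=\widetilde m_2$ matches the $F_1$-fibers and $\widetilde\varphi(\widetilde c_1)=\widetilde c_2$ matches the $F_2$-fibers, and since these are fiberings of tori, matching one fiber up to isotopy forces the whole $S^1$-fibration to be matched up to isotopy of $\widetilde\varphi$. After a small isotopy of $\widetilde\varphi$ we may assume it sends fibers to fibers exactly.

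Next I would glue: $\widetilde M = \widetilde N \setminus y\sim\widetilde\varphi(y)$, and because $\widetilde\varphi$ respects both circle fibrations, each of $\widetilde F_1$ and $\widetilde F_2$ descends to a codimension-$1$ foliation $\widetilde{\cH_i}$ of $\widetilde M$ whose leaves are built from the punctured-disk leaves of $\widetilde F_i$. One must check leaves are orientable: orientations of the leaves of $F_i$ pull back under the local homeomorphism $P$ to coherent orientations on the leaves of $\widetilde F_i$, which are preserved by $\widetilde\varphi$ since they are preserved by $\varphi$ (the defining condition in Definition \ref{d.hirsch}) and $\widetilde\varphi$ covers $\varphi$; hence each $\widetilde{\cH_i}$ is a genuine Hirsch foliation. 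Then I would show $\pi$ maps leaves to leaves: on $\widetilde M - \widetilde T$ this is immediate from $\pi\circ Q = q\circ P$ of Lemma \ref{l.liftM} together with $P(\widetilde F_i)=F_i$, and it extends across $\widetilde T$ by continuity.

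Finally, the affineness. The projective holonomy map of $\widetilde{\cH_i}$ relative to $\widetilde T$ is the map induced by $\widetilde\varphi$ on the fiber quotient circle $\widetilde S^1_i$ of $\widetilde F_i$, and the covering $\pi$ restricts to a covering of the corresponding fiber quotient circles which, by the relation $\pi\circ Q = q\circ P$, intertwines the holonomy of $\widetilde{\cH_i}$ with that of $\cH_i$ — this is precisely the semiconjugacy situation analyzed in the proof of Lemma \ref{l.affT} (the Claim there), where it was shown that a covering of circles intertwining a map with $z^n$ forces the covered map to also be conjugate to $z^n$. Hence each $\widetilde{\cH_i}$ is an affine Hirsch foliation, so $\widetilde M$ is a Hirsch manifold (the two foliations are non-isotopically leaf-conjugate by Proposition \ref{p.nonisotopy}, since they have distinct boundary slopes inherited from $F_1$, $F_2$). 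The main obstacle I anticipate is the bookkeeping in the first paragraph: checking that one can genuinely isotope $\widetilde\varphi$ so that it \emph{simultaneously} carries $\widetilde{\cF_1^{out}}\to\widetilde{\cF_1^{in}}$ and $\widetilde{\cF_2^{out}}\to\widetilde{\cF_2^{in}}$ on the nose — i.e. that the two fiberwise conditions are compatible — which uses that $\widetilde\varphi$ already matches both fiber classes $\widetilde m_1\mapsto\widetilde m_2$, $\widetilde c_1\mapsto\widetilde c_2$ and that these generate, so that the isotopy correcting one fibering can be taken to fix the other.
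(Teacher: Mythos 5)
Your proposal follows essentially the same route as the paper's proof: lift $F_1,F_2$ to $\widetilde{N}$, check that $\widetilde{\varphi}$ respects the lifted boundary circle fibrations, glue to obtain $\widetilde{\cH_1},\widetilde{\cH_2}$, get the leaf-to-leaf property from $\pi\circ Q=q\circ P$, and deduce affineness from the fact that the lifted holonomy covers a map conjugate to $z^n$ (the mechanism of the Claim in Lemma \ref{l.affT}, which is also what the paper's brief appeal to expanding maps amounts to). One correction: the ``small isotopy of $\widetilde{\varphi}$'' you permit yourself in the first paragraph is both unnecessary and risky, because after such an isotopy the identity $P\circ\widetilde{\varphi}=\varphi\circ P$ from Lemma \ref{l.lifttorus} --- which is precisely what produces the covering $\pi$ in Lemma \ref{l.liftM} and underlies your leaf-to-leaf argument --- need no longer hold, so you would have to redo that part. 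In fact no isotopy is needed: $\varphi$ maps fibers of $\cF_i^{out}$ to fibers of $\cF_i^{in}$ on the nose (this is how $\cH_i$ was constructed), and since $\widetilde{\cF_i}^{out}$ and $\widetilde{\cF_i}^{in}$ are the $P$-preimages of these fibrations, the relation $P\circ\widetilde{\varphi}=\varphi\circ P$ immediately forces $\widetilde{\varphi}$ to map fibers of $\widetilde{\cF_i}^{out}$ to fibers of $\widetilde{\cF_i}^{in}$ exactly, simultaneously for $i=1,2$; this is how the paper argues, and it dissolves the ``main obstacle'' you flag at the end.
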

\begin{proof}
Assume that $\cF_i^{out}=F_i \cap T^{out}$ and $\cF_i^{in}=F_i \cap T^{in}$ which are two $S^1$-fibrations on $T^{out}$
and $T^{in}$ respectively.
Since $F_i$ induces $\cH_i$ on $M$ under the gluing homeomorphism
$\varphi: T^{out} \to T^{in}$, $\varphi$ maps every fiber of $\cF_i^{out}$ to a fiber of $\cF_i^{in}$.

Suppose $\widetilde{\cF_i}^{out}$ and $\widetilde{\cF_i}^{in}$ are the lifted fibrations of $\cF_i^{out}$ and
 $\cF_i^{in}$ on $\widetilde{T}^{out}$ and $\widetilde{T}^{in}$ under the covering map $P$
respectively.
$\widetilde{\varphi}: \widetilde{T}^{out} \to \widetilde{T}^{in}$ is the lifted map of
$\varphi: T^{out} \to T^{in}$,
\emph{i.e.} $P\circ \widetilde{\varphi}= \varphi\circ P: \widetilde{T}^{out} \to T^{in}$.
Therefore, $\widetilde{\varphi}$ maps every every fiber of $\widetilde{\cF_i}^{out}$ to a fiber of $\widetilde{\cF_i}^{in}$.
Then, $\widetilde{F_i}$ induces a Hirsch foliation $\widetilde{\cH_i}$ on $\widetilde{M}$.

To finish the proof, now we only need to check that $\widetilde{\cH_i}$ is an affine Hirsch foliation.
This actually is a consequence of the following facts:
\begin{itemize}
  \item $\widetilde{\varphi}$ is the lifted map of $\varphi$;
  \item $\cH_i$ is an affine Hirsch foliation;
  \item every expanding map on $S^1$ is topologically conjugate to an affine map on $S^1$
  with the same degree.
\end{itemize}
 \end{proof}

\begin{lemma}\label{l.MDEBL}
$\widetilde{M}$ is a DEBL Hirsch manifold.
\end{lemma}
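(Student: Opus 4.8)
The plan is to show that the covering manifold $\widetilde{M}$, built from the $q_2$-fold cover $\widetilde{N}$ of $N$ and the lifted gluing map $\widetilde{\varphi}$, is exactly the manifold obtained from an exchangeably braided link by the standard construction of Subsection~\ref{sb.His}. First I would identify $\widetilde{N}$ topologically: by Proposition~\ref{p.P}, $\widetilde{N}$ carries the $n$-punctured disk fibration $\widetilde{F_1}$, so $\widetilde{N}$ is the complement in a solid torus $\widetilde{V_1}$ of a tubular neighborhood of a closed $n$-braid; here $\widetilde{V_1}$ is obtained by filling $\widetilde{T}^{out}$ along $\widetilde{m_1}$ (using Proposition~\ref{p.P}(2), $P$ restricted to a component of $P^{-1}(\Sigma_1)$ is a homeomorphism, so the puncture curves $\widetilde{m_2^j}$ are meridians of the removed solid torus). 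Symmetrically, using $\widetilde{F_2}$ and Proposition~\ref{p.P}(4), filling $\widetilde{T}^{in}$ along $\widetilde{c_2}$ realizes $\widetilde{N}$ as the complement of a closed $n$-braid in a second solid torus $\widetilde{V_2}$. Hence $\widetilde{N}$ is the complement of a two-component link $\widetilde{L} = \widetilde{K_1} \cup \widetilde{K_2}$ in the closed manifold $\widehat{W}$ obtained by doing both fillings simultaneously, each component being a closed braid relative to the other.

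The key point that remains is that $\widehat{W} = S^3$ and that the meridian–longitude data matches the exchangeably braided link construction. For $\widehat{W} = S^3$: the filling slopes are $\widetilde{m_1}$ on $\widetilde{T}^{out}$ and $\widetilde{c_2}$ on $\widetilde{T}^{in}$, and by Lemma~\ref{l.lifttorus} the gluing $\widetilde{\varphi}$ sends $\widetilde{m_1}\mapsto\widetilde{m_2}$ and $\widetilde{c_1}\mapsto\widetilde{c_2}$; I would compute $H_1(\widehat{W})$ from a presentation with generators coming from $H_1(\widetilde{N})$ and the two filling relations, and check it is trivial (indeed, filling along the meridian $\widetilde{m_1}$ of the first braid's axis, then the complement of a closed braid in a solid torus capped off by the axis's meridian is $S^3$; doing the analogous thing on the other side is consistent by the exchangeability built into the lifted picture). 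Then $\widetilde{K_1}$ is isotopic in $\widetilde{V_1} \subset S^3$ to the core curve corresponding to $\widetilde{m_1}$ and is a closed $n$-braid about $\widetilde{K_2}$, and vice versa, so $\widetilde{L}$ is an exchangeably braided link with linking number $n$.

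Finally I would check that the two fibrations $\widetilde{F_1}, \widetilde{F_2}$ on $\widetilde{N}$ together with $\widetilde{\varphi}$ are precisely the data $F_1, F_2, \varphi$ of the DEBL construction: $\widetilde{F_1}$ is the $n$-punctured disk fibration whose fiber is spanned by a meridian of $\widetilde{K_1}$ and $n$ meridians of $\widetilde{K_2}$ (i.e.\ $\Sigma_1$ in the notation of Subsection~\ref{sb.His}), $\widetilde{F_2}$ is the symmetric one, and by Lemma~\ref{l.lifttorus}(2) the gluing map sends $\widetilde{m_1}\mapsto\widetilde{m_2}$, $\widetilde{c_1}\mapsto\widetilde{c_2}$, which is exactly the normalization $\varphi(m_1)=m_2^1$, $\varphi(l_1^1)=l_2$ forcing uniqueness of the DEBL manifold. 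Since Subsection~\ref{sb.His} shows that an exchangeably braided link determines a unique Hirsch manifold, $\widetilde{M}$ is that manifold, hence a DEBL Hirsch manifold. The main obstacle I anticipate is the first part of the second paragraph: verifying cleanly that the doubly-filled manifold $\widehat{W}$ is $S^3$ (as opposed to some other homology sphere or lens space) and that the two braid axes $\widetilde{K_1}, \widetilde{K_2}$ sit inside it in the mutually-braided way; this requires carefully tracking how the homology generators $[\widetilde{m_2}], [\widetilde{l_1}]$ of $H_1(\widetilde{N})$ behave under both Dehn fillings and under $\widetilde{\varphi}$, using Proposition~\ref{p.P}(3) (that $P:\widetilde{l_1}\to l_1$ is a $q_2$-cover) to get the correct longitude.
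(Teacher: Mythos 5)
Your overall strategy is the same as the paper's (fill both boundary tori of $\widetilde{N}$ with solid tori, show the resulting closed manifold is $S^3$, check that the two cores are mutually braided, and match the gluing data with the DEBL normalization), but the filling slopes you specify are the wrong ones, and this is exactly what blocks the step you flag as the ``main obstacle.'' With respect to $\widetilde{F_1}$, each fiber meets $\widetilde{T}^{out}$ in the single curve $\widetilde{m_1}$ and meets $\widetilde{T}^{in}$ in the $n$ curves $\widetilde{m_2^j}$; hence the solid torus exhibiting $\widetilde{N}$ as a closed-braid complement is obtained by filling $\widetilde{T}^{in}$ along $\widetilde{m_2}$ (gluing back the braid neighborhood), not by filling $\widetilde{T}^{out}$ along $\widetilde{m_1}$. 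Filling $\widetilde{T}^{out}$ along $\widetilde{m_1}$ caps off the single outer boundary circle of each $\widetilde{F_1}$-fiber and yields an $n$-holed-sphere bundle over $S^1$, which is not a solid torus for $n\geq 2$; symmetrically, filling $\widetilde{T}^{in}$ along $\widetilde{c_2}$ does not produce the solid torus associated with $\widetilde{F_2}$ (there one must fill $\widetilde{T}^{out}$ along $\widetilde{c_1}$). The correct meridian slopes are therefore $\widetilde{c_1}\subset\widetilde{T}^{out}$ and $\widetilde{m_2}\subset\widetilde{T}^{in}$, which is what the paper uses; with your slopes the cores of the filling tori are not transverse to either capped-off fibration, so the mutual-braiding verification and the recovery of the DEBL spanning surfaces both fail.

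Once the slopes are corrected, the $S^3$ identification you were worried about is immediate from Proposition \ref{p.P} rather than requiring a delicate homology computation: filling $\widetilde{T}^{in}$ along $\widetilde{m_2}$ caps each $\widetilde{F_1}$-fiber (a homeomorphic lift of $\Sigma_1$, by Proposition \ref{p.P}) into a disk, so $\widetilde{N}\cup\widetilde{V_2}$ is a solid torus whose meridian on $\widetilde{T}^{out}$ is $\widetilde{m_1}$; since $\widetilde{c_1}$ meets $\widetilde{m_1}$ in exactly one point (Proposition \ref{p.P}, item 5), gluing in the second solid torus $\widetilde{V_1}$ with meridian $\widetilde{c_1}$ gives $S^3$. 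The remaining steps of your plan (cores $K_1,K_2$ mutually braided via the two capped fibrations, and Lemma \ref{l.lifttorus} giving $\widetilde{\varphi}(\widetilde{m_1})=\widetilde{m_2}$, $\widetilde{\varphi}(\widetilde{c_1})=\widetilde{c_2}$, i.e.\ the DEBL normalization $\varphi(m_1)=m_2^1$, $\varphi(l_1^1)=l_2$ under the dictionary $l_1^j\leftrightarrow\widetilde{c_1^j}$, $l_2\leftrightarrow\widetilde{c_2}$) then run exactly as in the paper --- but note that this very dictionary forces the meridian of the neighborhood of $K_1$ to be $\widetilde{c_1}$, not $\widetilde{m_1}$, so your slope assignment must be fixed for the final matching to make sense. (In fairness, the paper's own statement of the gluings contains a typo --- $\phi_1$ is said to land on $\widetilde{T}^{in}$ while $\widetilde{c_1}$ lies on $\widetilde{T}^{out}$ --- but its meridian conditions, $\widetilde{c_1}$ bounding in $\widetilde{V_1}$ and $\widetilde{m_2}$ in $\widetilde{V_2}$, are the correct ones.)
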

\begin{proof}
We glue two solid tori $\widetilde{V_1}$ and $\widetilde{V_2}$ to $\widetilde{N}$ along its boundary $\widetilde{T}^{in} \cup \widetilde{T}^{out}$ respectively
by the gluing maps
$\phi_1: \partial \widetilde{V_1} \to \widetilde{T}^{in}$ and
 $\phi_2: \partial \widetilde{V_2} \to \widetilde{T}^{out}$
so that, $\widetilde{m_2}$ bounds a disk in $\widetilde{V_2}$ and $\widetilde{c_1}$ bounds a disk in $\widetilde{V_1}$.
Then, the glued manifold is homeomorphic to $S^3$.

Let $K_i$ ($i=1,2$)  be a simple closed curve in  $\widetilde{V_i}$  so that $\widetilde{V_i}$
is a tubular neighborhood of $K_i$.

Since $\widetilde{F_2}$ is a punctured disk fibration structure on $\widetilde{N}$
 and $\widetilde{m_2}$ bounds a disk in $\widetilde{V_2}$, the union of $\widetilde{V_2}$ and $\widetilde{N}$, named by $\widetilde{U_2}$,
is also homeomorphic to a solid torus. Obviously, $K_2$ is a closed braid in $\widetilde{U_2}$. Since
$S^3= \widetilde{V_2}\cup\widetilde{N} \cup \widetilde{V_2}$, automatically,
$K_2$ is a closed braid relative to $K_1$, \emph{i.e.} $K_2$ is a closed braid in $S^3 -K_1$.

Similarly, one can show that $K_1$ is a closed braid relative to $K_2$. Therefore, $L= K_1 \cup K_2$ is an exchangeably braided link.
Now one can automatically build the Hirsch manifold derived from $L$ and check that the Hirsch manifold is homeomorphic to $\widetilde{M}$.
\end{proof}

\begin{proof} [Proof of Theorem \ref{t.covering}]
The first part of Theorem \ref{t.covering} is a direct consequence of Lemma \ref{l.liftM} and Lemma \ref{l.MDEBL}.
Moreover, by Lemma \ref{l.homologyF2}, $q_2$ can be divided by $n^2-1$.
\end{proof}

\subsection{Finiteness of Hirsch manifolds with stand number $n$}

We will use the following theorem of Wang \cite{Wang}.

\begin{theorem}\label{t.wang}
Let $M$  be a closed irreducible $3$-manifold which is nonorientable or Seifert fibered or has a nontrivial torus decomposition (\emph{i.e.}
there is a JSJ torus).
 Then $M$  covers infinitely many nonhomeomorphic 3-manifolds if and only if M  is an orientable Seifert fiber space with nonzero Euler number.
\end{theorem}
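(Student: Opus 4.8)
The plan is to prove both implications through the geometric (JSJ) decomposition of $M$, using the fact that a covering $p\colon M\to N$ carries the JSJ system of $N$ back to that of $M$, so each geometric piece of $N$ is covered by one or more geometric pieces of $M$ of the same type (hyperbolic or Seifert). By geometrization the hypotheses rule out exactly the case in which $M$ is a closed orientable hyperbolic (equivalently, atoroidal non-Seifert) manifold; thus after decomposition $M$ is a union of Seifert and hyperbolic blocks glued along JSJ tori, possibly nonorientable, possibly a single Seifert block.

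For the ``if'' direction, suppose $M$ is an orientable Seifert fibred space with nonzero Euler number $e$. For each $d\geq 1$ I would construct a free $\mathbb{Z}/d$ action along the generic fibre (adjusting near the finitely many exceptional fibres) and take the quotient, producing a fibre-preserving covering $M\to N_d$ of fibre degree $d$. Under such a covering the Euler number transforms by $e(N_d)=d\,e(M)$, so the $N_d$ realise infinitely many distinct Euler numbers; since the Euler number is an invariant of the essentially unique Seifert structure on a piece with infinite fundamental group, the $N_d$ are pairwise non-homeomorphic for infinitely many $d$, giving infinitely many targets. The only points needing care are the compatibility of the cyclic action with the exceptional fibres and the uniqueness of the Seifert structure used to read off non-homeomorphism.

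For the ``only if'' direction I would argue by contrapositive: if $M$ is not an orientable Seifert space with $e\neq 0$, then the covering degree is bounded and, for each bounded degree, only finitely many targets occur. Bounding the degree splits according to the Gromov simplicial volume $\|M\|$. If $\|M\|>0$, i.e. $M$ has a hyperbolic JSJ piece, then any $N$ covered by $M$ satisfies $\|M\|=n\,\|N\|$, and $N$ must itself carry a hyperbolic piece, so $\|N\|$ is bounded below by the least positive volume of a hyperbolic piece (finite-volume hyperbolic $3$-manifold volumes have a positive infimum); hence the degree $n$ is bounded. Finiteness for bounded degree then follows piece by piece: a given hyperbolic piece covers only finitely many hyperbolic orbifolds of bounded index (finiteness of lattices of bounded covolume, via Mostow rigidity), each Seifert piece likewise admits finitely many bounded-degree quotients, and there are finitely many ways to reglue finitely many pieces along their boundary tori.

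The hard case, and the main obstacle, is $\|M\|=0$, namely $M$ a graph manifold or a Seifert space, where volume gives no control and one must distinguish orientable Seifert with $e\neq 0$ from everything else by hand. The strategy is to use that on each Seifert JSJ piece with hyperbolic base orbifold the Seifert fibration is canonical, so $p$ may be isotoped to be fibre-preserving on every piece; the base-orbifold Euler characteristics then multiply and, being negative, bound the base degrees, while the fibre degrees are constrained by the requirement that fibre slopes match across the JSJ tori. In a genuine graph manifold (one containing a JSJ torus) adjacent fibres are glued to non-fibre curves, so no global fibrewise unwinding is possible and the fibre degree, hence the total degree, is bounded; the same holds for an orientable Seifert space with $e=0$ (which virtually fibres horizontally) and for nonorientable Seifert spaces (where the twisting forces $e=0$), leaving orientable Seifert with $e\neq 0$ as the unique source of unbounded degree. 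Making this rigidity precise, by controlling the interaction of fibrewise covers with the JSJ gluing matrices and with the Euler-number constraints in the Gromov-norm-zero setting, is the delicate technical heart of the argument.
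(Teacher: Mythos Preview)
The paper does not prove this theorem; it is quoted as a result of Wang \cite{Wang} and used as a black box in the proof of Proposition \ref{p.finiteMs}. There is therefore no proof in the paper to compare your proposal against.

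As a standalone sketch your outline is broadly reasonable and in the spirit of how such results are proved (simplicial volume to control the hyperbolic part, Euler number and base-orbifold Euler characteristic to control the Seifert part). Two points would need real work to make it rigorous. First, in the ``if'' direction, one cannot simply take a free $\mathbb{Z}/d$-action ``along the generic fibre'' when exceptional fibres are present; the honest construction proceeds by writing down Seifert invariants for the putative quotients $N_d$ and checking they are manifolds, and this is where $e\neq 0$ is actually used. Second, in the ``only if'' direction for graph manifolds (the $\|M\|=0$ case), your last paragraph correctly identifies the crux but does not resolve it: bounding the fibre degree across JSJ tori requires a genuine argument tracking how the gluing matrices interact with fibrewise covers, and this is the technical heart of Wang's paper. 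Your proposal is an accurate roadmap but not yet a proof.
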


\begin{proof} [proof of Proposition \ref{p.finiteMs}]
 On the one hand, By Proposition \ref{p.topchar},  an $n$-strand Hirsch manifold
is an irreducible orientable closed $3$-manifold with some JSJ tori.  By Theorem \ref{t.wang}, for an given DEBL Hirsch manifold $\widetilde{M}$,
there are only finitely many Hirsch manifolds with $\widetilde{M}$ as a finite covering space.

On the other hand,
Corollary \ref{c.finiteness} says that for a positive integer $n$, up to isotopy, there are only finitely many exchangeably braided links with strands
number $n$. Recall that an exchangeably braided link decide a DEBL Hirsch manifold.
Therefore, there are only finitely many DEBL Hirsch manifolds with strand number $n$.

Let $M$ be a Hirsch manifold with strand number $n$. Then by Theorem \ref{t.covering}, a finite covering space of $M$,
$\widetilde{M}$, is a DEBL Hirsch manifold with strand number $n$.
Combing the two sides above, up to homeomorphism, there are only   finitely many
  Hirsch manifolds with strand number $n$.
 \end{proof}

\section{Proof of Proposition \ref{p.notHirschmfd}}
\label{s.example}
In this section, we will construct an example to prove Proposition \ref{p.notHirschmfd},
i.e. there exists a $3$-manifold which admits an affine Hirsch foliation but is not a Hirsch manifold.
We will use the following inequality by Bennequin \cite{Be}:

\begin{lemma} [Bennequin inequality]
\label{l.Beinequalty}
Let $L$ be a non-separating link of $\mu$ components, presented by a closed braid with $l$
strands and $c_+$ ($c_-$) positive (negative) crossings. Then $g(L)$, the genus of $L$, is bounded
as follows:
 $$\frac{|c_+ -c_-| -l-\mu}{2} +1 \leq g(L)\leq \frac{|c_+ +c_-| -l-\mu}{2} +1.$$
\end{lemma}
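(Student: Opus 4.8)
The plan is to separate the two bounds sharply: the \emph{upper} bound comes for free from an explicit Seifert surface built out of the braid, while the \emph{lower} bound is the genuine content of Bennequin's theorem and rests on the tightness of the standard contact structure on $S^3$.

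First I would establish the upper bound via the Bennequin (Seifert-algorithm) surface. Applying Seifert's algorithm to the given closed-braid diagram, the Seifert circles are exactly the $l$ strand circles, contributing $l$ disks, and each of the $c_+ + c_-$ crossings contributes one half-twisted band. The resulting spanning surface $F_0$ satisfies $\partial F_0 = L$ and has Euler characteristic $\chi(F_0) = l - (c_+ + c_-)$; the hypothesis that $L$ is non-separating guarantees $F_0$ is connected. For a connected surface with $\mu$ boundary components one has $\chi = 2 - 2g - \mu$, so $g(F_0) = \tfrac{(c_+ + c_-) - l - \mu}{2} + 1$. Since $g(L) \le g(F_0)$ and $c_\pm \ge 0$ gives $|c_+ + c_-| = c_+ + c_-$, the right-hand inequality follows immediately.

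For the lower bound I would realize the closed braid as a transverse link $L$ in $(S^3, \xi_{\mathrm{std}})$, the standard tight contact structure adapted to the open book whose binding is the braid axis and whose pages are the disks the braid is transverse to. A direct computation for closed braids identifies the self-linking number as $\mathrm{sl}(L) = (c_+ - c_-) - l$ (writhe minus number of strands). The lower bound is then exactly the Bennequin inequality in contact form, $\mathrm{sl}(L) \le -\chi(F)$ for every Seifert surface $F$: using $-\chi(F) = 2g(F) + \mu - 2$ and minimizing over $F$ gives $(c_+ - c_-) - l \le 2g(L) + \mu - 2$, i.e. $g(L) \ge \tfrac{(c_+ - c_-) - l - \mu}{2} + 1$. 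Applying the identical argument to the mirror braid, which interchanges $c_+ \leftrightarrow c_-$ while preserving the genus, yields the same estimate with $c_- - c_+$, and combining the two produces the stated bound with $|c_+ - c_-|$.

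The substance, and the main obstacle, is the contact inequality $\mathrm{sl}(L) \le -\chi(F)$. Here I would perturb $F$ to be generic with respect to $\xi_{\mathrm{std}}$, so its characteristic foliation has only elliptic and hyperbolic singularities, each signed by whether the orientations of $\xi_{\mathrm{std}}$ and $TF$ agree. Poincaré--Hopf yields $\chi(F) = (e_+ - h_+) + (e_- - h_-)$, and computing the relative Euler class of $\xi_{\mathrm{std}}|_F$ along $L$ gives $\mathrm{sl}(L) = -\chi(F) + 2(e_- - h_-)$. The crux is to show $e_- - h_- \le 0$: by the Giroux--Eliashberg elimination lemma one cancels negative elliptic--hyperbolic pairs, and the \textbf{tightness} of $\xi_{\mathrm{std}}$ --- the absence of an overtwisted disk --- forces the surviving negative elliptic points to be removable, leaving $e_- - h_- = -h_- \le 0$. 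I expect this tightness step to carry all the difficulty, since the inequality genuinely fails for overtwisted structures; everything else is bookkeeping with Euler characteristics and a mirror-symmetry trick.
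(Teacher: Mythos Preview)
Your proposal is a correct outline of the standard modern proof of the Bennequin inequality: the upper bound from the Seifert surface of the braid diagram, the lower bound from the self-linking inequality $\mathrm{sl}(L)\le -\chi(F)$ in the tight contact $S^3$, with the mirror trick to obtain the absolute value. The computation $\chi(F_0)=l-(c_++c_-)$, the identification $\mathrm{sl}=w-l=(c_+-c_-)-l$, and the characteristic-foliation bookkeeping $\mathrm{sl}=-\chi+2(e_--h_-)$ are all right, and you correctly locate the real content in showing $e_--h_-\le 0$ via elimination and tightness.

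However, there is nothing to compare against: the paper does not prove this lemma at all. It is quoted as a known result with a citation to Bennequin's original paper \cite{Be}, and is used only as a black box in the proof of Proposition~\ref{p.notHirschmfd}. So your sketch goes well beyond what the paper does; the paper simply invokes the inequality.
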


\begin{proof}[Proof of Proposition \ref{p.notHirschmfd}]
Let $b=(\sigma_1 \sigma_2^{-1})^2$ be a $3$-strand braid.
 Now we can follow the beginning of Section \ref{s.prel} to build an affine Hirsch foliation
 $\cH$ on a closed $3$-manifold $M$. We briefly recall the construction here.
 \begin{itemize}
   \item $b$ also can be used to represent a diffeomorphism
  on a $3$-punctured disc $\Sigma$ and we denote the mapping torus of $(\Sigma, b)$ by $N$.
   \item $F= \{\Sigma \times \{\star\}\}$ provides a $3$-punctured disk fibration on  $N$,
   which provides $T^{in}$ and $T^{out}$ two $S^1$-fibration structures $\cF_1$ and $\cF_2$
respectively.
   \item After carefully choosing  orientations to the objects above,
   we can build an orientation-preserving homeomorphism $\varphi: T^{out} \to T^{in}$ which maps every fiber of $\cF_1$ to a fiber of $\cF_2$
  and preserves the corresponding orientations.
  \item Let $M =N\setminus x\sim \varphi(x), x\in T^{out} N$. Then  $F$ naturally induces a Hirsch foliation $\cH$ on $M$ by $\varphi$.
  If we choose $\varphi$ suitably, $\cF$ is an affine Hirsch foliation.
 \end{itemize}
 Now  we assume that $M$ is a Hirsch manifold.
 Following the arguments in subsection \ref{s.virp}, there exists some integer $p$ so that the braid $b^{q_2} \tau^p$ is an exchangeable braid where $\tau$ is a $3$-strand full twist braid. This means that
 the knot $K= \widetilde{b^{q_2} \tau^p}$, the closed braid of $b^{q_2} \tau^p$, is a trivial knot.
 In the following, we will show that the genus of $K$, $g(K)$, is nonzero.
 Then $K$ isn't  a trivial knot.  We obtain a contradiction. Then  $M$ isn't a Hirsch manifold.

Comparing with the notations in Lemma \ref{l.Beinequalty}, in our case, $L=K= \widetilde{b^{q_2} \tau^p}$,  $l=3$, $\mu=1$ and $|c_+ -c_-| =6 |p|$.
By  Lemma \ref{l.Beinequalty}, $g(K)\geq 3|p|-1$.
Therefore, if $g(K)=0$, then $p=0$.
In the case $p=0$, $K=\widetilde{b^{q_2}}$. By Lemma \ref{l.homologyF2}, $q_2$ is nonzero.
Actually, it is well known that in this case, $\widetilde{b^{q_2}}$
is a genus $1$ fiber knot (see, for instance, Rolfsen  \cite [Chapter 10] {Ro}). Therefore, $K$ isn't a trivial knot.
\end{proof}

\section*{Acknowledgments}
The author would like are grateful to S\'ebastien Alvarez for useful discussions.
The author is supported by the National Natural
Science Foundation of China (grant no. 11471248).

%
%
%
%

\end{document}